\documentclass[oneside,english]{amsart}
\usepackage[T1]{fontenc}
\usepackage[latin9]{inputenc}
\usepackage{geometry}
\geometry{verbose,tmargin=4cm,bmargin=4cm,lmargin=2.5cm,rmargin=2.5cm,headheight=3cm,headsep=3cm,footskip=3cm}
\usepackage{float}
\usepackage{amstext}
\usepackage{amsthm}
\usepackage{amssymb}
\usepackage{graphicx}

\makeatletter

\providecommand{\tabularnewline}{\\}

\numberwithin{equation}{section}
\numberwithin{figure}{section}
\theoremstyle{plain}
\newtheorem{thm}{\protect\theoremname}[section]
\theoremstyle{definition}
\newtheorem{defn}[thm]{\protect\definitionname}
\theoremstyle{remark}
\newtheorem{rem}[thm]{\protect\remarkname}
\theoremstyle{definition}
\newtheorem{example}[thm]{\protect\examplename}
\theoremstyle{plain}
\newtheorem{lem}[thm]{\protect\lemmaname}
\theoremstyle{plain}
\newtheorem*{thm*}{\protect\theoremname}
\theoremstyle{plain}
\newtheorem{cor}[thm]{\protect\corollaryname}
\theoremstyle{remark}
\newtheorem{claim}[thm]{\protect\claimname}
\theoremstyle{plain}
\newtheorem{prop}[thm]{\protect\propositionname}

\makeatother

\usepackage{babel}
\providecommand{\claimname}{Claim}
\providecommand{\corollaryname}{Corollary}
\providecommand{\definitionname}{Definition}
\providecommand{\examplename}{Example}
\providecommand{\lemmaname}{Lemma}
\providecommand{\propositionname}{Proposition}
\providecommand{\remarkname}{Remark}
\providecommand{\theoremname}{Theorem}

\begin{document}
\global\long\def\bb{\mathcal{B}}%

\global\long\def\lp{\mathcal{L}}%

\global\long\def\bbg{\mathcal{B_{\gamma}}}%

\global\long\def\lpg{\mathcal{L_{\gamma}}}%

\title{Moment Summation Methods and Non-Homogeneous Carleman Classes}
\author{Avner Kiro}
\date{}
\address{A. Kiro, Faculty of Mathematics and Computer Science,
The Weizmann Institute of Science,
234 Herzl Street,Rehovot 76100, Israel}
\email{avner-ephraiem.kiro@weizmann.ac.il}
\begin{abstract}
We extend the classical theorems of F. Nevanlinna and Beurling by characterizing the image of various spaces of smooth functions under the generalized Laplace transform. To achieve this, we introduce and analyze novel non-homogeneous Carleman classes, which generalize the traditional homogeneous definitions. This characterization allows us to derive necessary and sufficient conditions for the applicability of moment summation methods within a given class of functions. Furthermore, we establish an extension of \'{E}calle's concept of quasianalytic continuation and apply these results to the theory of multi-summability and Euler-type differential equations.
\end{abstract}

\maketitle
\tableofcontents{}

\section{Introduction}

Moment summation methods (also known as generalized Borel-Laplace
summation) associate functions with formal power series. They extend the classical notion of the convergent power series. Following \cite{hardy2000divergent}, we briefly recall some
of the basic notions related to these methods. Let $\left(\gamma_{n}\right)_{n\geq1}$
be a fast-growing sequence of positive numbers, with  $\lim_{n\to\infty}\gamma_{n}^{1/n}=\infty$.
Suppose that it is a moment sequence for a function $K$ defined
on $\mathbb{R}_{+}$, that is 
\[
\int_{0}^{\infty}t^{n}K(t)dt=\gamma_{n+1},\quad\forall n\in\mathbb{Z}_{+}.
\]
Let $\widetilde{f}\sim\sum_{n\geq0}a_{n}x^{n}$ be a formal power
series. Suppose the series $F(t)=\sum_{n\geq0}\frac{a_{n}}{\gamma_{n+1}}t^{n}$
converges for $0\leq t\leq t_{0}$. If it has an analytic continuation
to the whole positive ray $\mathbb{R}_{+}$ such that
\[
f(x)=\int_{0}^{\infty}F(xt)K(t)dt,
\]
then $\widetilde{f}$ is said to be\emph{ $\left(\gamma\right)$--summable}
to the function $f$. We define \emph{the generalized Borel and Laplace
transforms} by $\mathcal{B_{\gamma}}f=F$ and $\mathcal{L}_{\gamma}F=f$,
respectively. We say that\emph{ $\left(\gamma\right)$--}summation
is \emph{applicable} in a class $D$  of $C^{\infty}$ smooth
functions if $\mathcal{L}_{\gamma}\mathcal{B}_{\gamma}f=f$ for every $f\in D$.

The generalized Borel transform $\mathcal{B}_{\gamma}$  depends only on the Taylor coefficients of $f$ at the origin. For $\left(\gamma\right)$--summation to apply in a class $D$ of smooth functions, the Borel map
\[
f\mapsto\left(f^{(n)}(0)\right)_{n\geq0}
\]
must be  injective in $D$. To capture this, we introduce the following definition.
\begin{defn}
Let $I$ be an interval, and let $D\subseteq C^\infty(I)$. We say that the
class $D$ is \emph{quasianalytic} if, for any $f_{1},f_{2}\in D$ satisfying 
\[
f_{1}^{(n)}\left(c\right)=f_{2}^{(n)}(c)
\]
for all $n\in\mathbb{Z}_{+}$ at some fixed point $c\in I$, we have $f_{1}\equiv f_{2}$
in $I$.
\end{defn}

The entire function 
\[
E(z):=\sum_{n\geq0}\frac{z^{n}}{\gamma_{n+1}}
\]
plays a key role in moment summation theory. Observe
that $E$ is the generalized Borel transform $\mathcal{B}_{\gamma}$
of the Cauchy kernel $x\mapsto\frac{1}{1-x}$, the identity in the ring of formal power series (under term-wise addition and multiplication).  For example, in the classical Borel--Laplace summation, $\gamma_{n+1} = n!$, $K(t) = e^{-t}$, and $E(z) = e^z$. In the logarithmic case $\gamma_n = \log^n(n + e)$, considered by Beurling \cite{beurling1989collected}, we have
\[
\log E(t)\sim-\log K(t)\sim\frac{e^{t}}{t}, \quad t\to\infty.
\]
We emphasize that this matching between $E$'s growth and $K$'s decay is essential and will feature prominently below.

Hereafter, we restrict to functions $F$ such
that $\mathcal{L}_{\gamma}F\in C^{\infty}$ in some right neighborhood
of $0$. Under regularity conditions like those imposed later, a necessary and sufficient condition for $\mathcal{L}_{\gamma}F$ to be a $C^{\infty}$ function on the interval $\left[0,1\right)$ is that, for any $\eta>1$ and $n\in\mathbb{Z}_{+}$,
\begin{equation}
\left|F^{(n)}(x)\right|=O\left(E\left(\eta x\right)\right),\quad\text{as}\quad x\to\infty.\label{eq:necessary conditions for C^infty}
\end{equation}
See sections \ref{subsec:1st inc}
and \ref{subsec:2nd inc} for details.

The purpose of this text is to describe the image under $\mathcal{L}_{\gamma}$
of classes of smooth functions. These classes arise by bounding the implicit constants in the above estimate. 
\begin{defn}
Let $\gamma$ be as before, $M=\left(M_{n}\right)_{n\geq0}$ be a
sequence of positive numbers, and $I=\left[0,a\right)$ (with $0<a\leq+\infty)$.
The class $A\left(M,\gamma;I\right)$ consists of all functions $F\in C^{\infty}\left[0,\infty\right)$
such that, for any $\eta>1$, there exists a constant $C=C(\eta,F)>0$
with
\[
\left|F^{(n)}\left(x\right)\right|\leq C^{n+1}M_{n}E\left(\eta\cdot\frac{x}{a}\right).
\]
We also define
\[
A\left(M,\gamma;0^{+}\right)=\bigcap_{a>0}A\left(M,\gamma;[0,a)\right)
\]
\end{defn}

\noindent Note that, by the Denjoy--Carleman Theorem \cite{carleman1926fonctions},
if $M$ is a log-convex sequence, then $A\left(M,\gamma;I\right)$
is quasianalytic if and only if 
\[
\sum_{n\geq0}\frac{M_{n}}{M_{n+1}}=\infty.
\]
For the particular choice $M_{n}=n!,$ the functions in the class
$A\left(M,\gamma;I\right)$ are analytic in some half-strip $\left\{ z:\text{dist}\left(z,\mathbb{R}_{+}\right)\leq\varepsilon\right\} .$

Using the classes $A\left(M,\gamma;I\right)$, we extend the notion
of \emph{ $\left(\gamma\right)$--}summation applicability. Namely, it is 
\emph{applicable }in a class $D\subseteq C^{\infty}$  if there exists a quasianalytic
$A\left(M,\gamma;I\right)$ such that all the functions in $\mathcal{L}_{\gamma}D$
are restrictions of functions in $A\left(M,\gamma;I\right)$ to a
right neighborhood of the origin. This extends \'{E}calle's notion of quasianalytic continuation \cite{ecalle1993cohesive}.

\subsection{Smoothness of the generalized Laplace transform}

The generalized Laplace transform,
\[
f(x)=\left(\mathcal{L}_{\gamma}F\right)(x)=\int_{0}^{\infty}F(xt)K(t)dt,
\]
can be viewed as a multiplicative convolution of $F$
with
$$K_{1}\left(u\right)\mathrel{\mathop{:}}=\frac{1}{u}K\left(\frac{1}{u}\right).$$
Indeed, with the change of variables $u=\frac{1}{t}$,
\begin{align*}
f(x) & =\int_{0}^{\infty}F\left(\frac{x}{u}\right)\frac{1}{u}K\left(\frac{1}{u}\right)\frac{du}{u}\\
& =\int_{0}^{\infty}F\left(\frac{x}{u}\right)K_{1}\left(u\right)\frac{du}{u}=\left(F\star K_{1}\right)(x).
\end{align*}
Viewed as such a convolution, $f$ ``inherits'' smoothness from both $F$ and $K$. Writing the derivatives under the integral sign gives, for $x\ge 0$ and $n\ge 0$,

\begin{equation}
f^{(n)}(x)=\int_{0}^{\infty}F^{(n)}\left(xt\right)t^{n}K(t)dt,\label{eq:deriv of convolution equ}
\end{equation}
and under the admissibility hypotheses this yields the near-origin bound
\begin{equation}
\left|f^{(n)}(x)\right|\leq C^{n+1}M_{n},\quad\forall x\in[0,\delta],\,n\geq0,\label{eq: Carleman A type bdd}
\end{equation}
for sufficiently small $\delta>0$.

It is also useful to switch to logarithmic coordinates. Setting $x=e^{\xi}$ and
$u=e^{\xi-v}$ we obtain
\[
f(e^\xi)=\int_{0}^{\infty} F(e^{v})\,K_1(e^{\xi-v})\,dv,
\]
hence, by differentiating in $\xi$ and using the chain rule,
\[
\frac{d^{n}}{d\xi^{n}}\,f(e^\xi)
=\int_{0}^{\infty} F(e^{v})\,\frac{d^{n}}{d\xi^{n}}\,K_1(e^{\xi-v})\,dv.
\]

From this one obtains the logarithmic bound (see \S\ref{subsec:1st inc})
\begin{equation}
\left|\frac{d^{n}}{d\xi^{n}}f\left(e^{\xi}\right)\right|\leq C_{\eta,\delta}^{\prime}\eta^{n}\widehat{\gamma}_{n},\quad\xi\in\left(-\infty,\log\delta\right],\,n\geq0,\,\eta>1,\label{eq: non-homog Carleman bdd}
\end{equation}
where $\widehat{\gamma}$ is given by 
\[
\widehat{\gamma}_{n}:=\left(\frac{\gamma_{n+1}}{\gamma_{n}}\right)^{\frac{\pi}{2}n}.
\]
The estimates \eqref{eq: Carleman A type bdd} and \eqref{eq: non-homog Carleman bdd} are the basic input for the local classes introduced below.

\begin{defn}
Let $I\subseteq\left[0,\infty\right)$ be an interval, $\eta>0$ and
$M,\,N$ be two positive sequences. The class $B_{\eta}(M,N;I)$ consists
of all functions $f\in C^{\infty}\left(I\right)$ such that for any
compact $J\subset I$, there exists a constant $C=C\left(f,J\right)>0$
such that the following two conditions hold:
\begin{enumerate}
\item ${\displaystyle \left|f^{(n)}(x)\right|\leq C^{n+1}M_{n},}$for any
$x\in J$ and $n\geq0;$
\item ${\displaystyle \left|\frac{d^{n}}{d\xi^{n}}f\left(e^{\xi}\right)\right|\leq C\eta^{n}N_{n},}$for
any $\xi$ so that $e^{\xi}\in J$ and $n\geq0.$
\end{enumerate}
We also set
\[
B_{\eta}(M,N;0^{+})=\bigcap_{a>0}B_{\eta}(M,N;\left[0,a\right))
\]

\end{defn}

\subsection{Main results}

\subsubsection{A local result}

We begin by presenting our main result for right germs at the origin.
\begin{thm}
\label{thm:Main thm}Let $\gamma$ be an admissible weight and let
$M$ be a regular sequence. For any $\eta>1,$ we have 
\[
\mathcal{B_{\gamma}}B_{1/\eta}(M\gamma,\widehat{\gamma};0_{+})\subseteq A(M,\gamma;0_{+})\quad\text{and}\quad\lpg A(M,\gamma;0_{+})\subseteq B_{\eta}(M\gamma,\widehat{\gamma};0_{+}).
\]
Moreover, if the class $A(M,\gamma;0_{+})$ is quasianalytic (i.e.,
if $\sum M_{n}/M_{n+1}=\infty)$, then 
\[
B_{1/\eta}(M\gamma,\widehat{\gamma};0_{+})\subseteq\lpg A(M,\gamma;0_{+})\subseteq B_{\eta}(M\gamma,\widehat{\gamma};0_{+}),
\]
and the $(\gamma)$--summation is applicable in the class $B_{1/\eta}(M\gamma,\widehat{\gamma};0_{+})$. 
\end{thm}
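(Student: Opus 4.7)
The plan is to break the theorem into three claims and attack them in order: (i) the direct inclusion $\lpg A(M,\gamma;0_+)\subseteq B_\eta(M\gamma,\widehat{\gamma};0_+)$; (ii) the direct inclusion $\bbg B_{1/\eta}(M\gamma,\widehat{\gamma};0_+)\subseteq A(M,\gamma;0_+)$; and (iii) the promotion of (ii) to the equality $f=\lpg\bbg f$ for $f\in B_{1/\eta}$ under the hypothesis $\sum M_n/M_{n+1}=\infty$. The summation-applicability statement is a direct consequence of (iii).

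For (i), take $F\in A(M,\gamma;0_+)$ and $f=\lpg F$. Differentiation under the integral gives
\[
f^{(n)}(x)=\int_{0}^{\infty}F^{(n)}(xt)\,t^{n}K(t)\,dt,
\]
and substituting the $A$-bound $|F^{(n)}(xt)|\le C^{n+1}M_nE(\eta'xt/a)$ reduces the first condition in the definition of $B_\eta$ to controlling $\int_0^\infty E(\eta'xt/a)\,t^n K(t)\,dt$ for $x$ near $0$. The admissibility hypothesis -- which matches $E$'s growth to $K$'s decay, as emphasized in the introduction -- makes this integral comparable to the bare moment $\gamma_{n+1}=\int t^n K\,dt$, producing the required $C'^{\,n+1}M_n\gamma_{n+1}$ bound. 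The logarithmic bound (2) comes from the convolutional representation $f(e^\xi)=\int_0^\infty F(e^v)K_1(e^{\xi-v})\,dv$: all derivatives fall on $K_1(e^{\xi-v})$, and the sharp constant $\widehat{\gamma}_n=(\gamma_{n+1}/\gamma_n)^{n\pi/2}$ is a Phragm\'en--Lindel\"of-type extremal bound on $\frac{d^n}{d\xi^n}K_1(e^\xi)$, which I expect to be established in \S\ref{subsec:1st inc}.

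Direction (ii) is the technical core. Given $f\in B_{1/\eta}(M\gamma,\widehat{\gamma};0_+)$, the Carleman bound at $x=0$ gives $|a_n|\le C^{n+1}M_n\gamma_{n+1}/n!$ for $a_n=f^{(n)}(0)/n!$, so the formal Borel transform $F(x)=\sum_{n\ge0}(a_n/\gamma_{n+1})x^n$ is entire by the regularity of $M$. The real task is to upgrade this coefficient bound to the pointwise estimate $|F^{(n)}(x)|\le C'^{\,n+1}M_n E(\eta x/a)$ valid on the whole ray $[0,\infty)$. This is where bound (2) is indispensable: it encodes, by the standard conversion from $C^\infty$-bounds in logarithmic coordinates to analyticity in a strip, that $f$ extends holomorphically to a sector around $[0,a)$ whose opening is determined by the $\pi/2$ exponent in $\widehat{\gamma}$. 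Representing $F$ as an inverse transform of $f$ along a contour inside this sector and bounding by Phragm\'en--Lindel\"of then yields the desired $E(\eta x/a)$ growth. Making this contour argument sharp -- so that the opening angle matches the exponent $\pi/2$ and the factor $1/\eta$ in the hypothesis converts to the factor $\eta$ in the conclusion -- is the main obstacle; the machinery of \S\ref{subsec:2nd inc} should already supply the core estimates.

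Assuming (i) and (ii), the quasianalytic conclusion is essentially formal. For $f\in B_{1/\eta}(M\gamma,\widehat{\gamma};0_+)$, set $F:=\bbg f\in A(M,\gamma;0_+)$ by (ii) and $g:=\lpg F\in B_\eta(M\gamma,\widehat{\gamma};0_+)$ by (i). The formal identity $\bbg\lpg F=F$ -- evident from $g^{(n)}(0)=F^{(n)}(0)\gamma_{n+1}=n!\,a_n$ -- shows that $f$ and $g$ share all Taylor data at the origin. Both lie in the non-homogeneous class $B_\eta(M\gamma,\widehat{\gamma};0_+)$, in which the log bound forces holomorphy in a sector; quasianalyticity in this sector-valued class requires only $\sum M_n/M_{n+1}=\infty$ on the Carleman side (rather than the much stronger $\sum M_n\gamma_n/(M_{n+1}\gamma_{n+1})=\infty$ that a naive Denjoy--Carleman reading would demand). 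Hence $f=g=\lpg F$, so $f\in\lpg A(M,\gamma;0_+)$ and $\lpg\bbg f=f$, which is precisely the applicability of $(\gamma)$-summation in $B_{1/\eta}(M\gamma,\widehat{\gamma};0_+)$.
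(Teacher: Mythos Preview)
Your outline for (i) is essentially the paper's argument (Lemma~\ref{lem: K_1 deriv estimate} supplies the $\widehat{\gamma}_n$ bound on the derivatives of $K_1(e^\xi)$, and Lemma~\ref{lem:three E inq} handles the $M_n\gamma_{n+1}$ bound). The serious gap is in (ii) and, consequently, in (iii).

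Your plan for (ii) rests on the claim that the logarithmic bound $|g^{(n)}(\xi)|\le C\eta^{-n}\widehat{\gamma}_n$ forces $g$ to be holomorphic in a strip, hence $f$ in a sector. This is only true when $\widehat{\gamma}$ is an \emph{analytic} sequence, i.e.\ when $\lim_{\rho\to\infty}\varepsilon(\rho)>0$. In the non-analytic case---which covers Beurling's $\gamma_n=\log^n n$ and the first four rows of Table~1---we have $\widehat{\gamma}_n/n!\to\infty$, the class $C_{1/\eta}^{\widehat{\gamma}}$ is genuinely non-analytic, and $f$ does \emph{not} extend holomorphically to any fixed sector. There is therefore no contour along which to write $\bbg f$ as an integral of $f$, and the Phragm\'en--Lindel\"of step you sketch cannot get started. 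The paper's substitute (Section~5) is a Dynkin-type almost holomorphic extension: one approximates $f$ in $C^\infty$ by a sequence $(f_k)$ with each $f_k$ analytic in a set $U_{k,\eta,Q}$ that is only a $\tfrac{1}{Qm_k^{1/k}L(k)}$-neighborhood of a sector of opening $\eta/\widehat{L}(k)\to 0$. The contour estimate (Lemma~\ref{lem:Main estimate genrlize borel}) is then applied to each $f_k$ separately, and the pieces are summed. This approximation scheme, not a single sectorial extension, is what produces the $E(\eta x)$ growth.

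The same oversight undermines (iii). Your deduction that $f=g$ requires quasianalyticity of $B_\eta(M\gamma,\widehat{\gamma};0_+)$, and you justify this by the sector analyticity and Carleson--Salinas--Korenblum. Again, in the non-analytic case there is no sector, and quasianalyticity of the non-homogeneous class is a genuinely new phenomenon (Theorem~\ref{thm: quasianalytic}), proved in the paper only for $B_{1/\eta}$ and only via the factorization $B_{1/\eta}\subseteq B^{+}_{\eta_1}+B^{-}_{\eta_1}$ together with Lemma~\ref{lem:intersection pm}. Your parenthetical remark that one needs $\sum M_n/M_{n+1}=\infty$ ``rather than the much stronger $\sum M_n\gamma_n/(M_{n+1}\gamma_{n+1})=\infty$'' correctly identifies that something nontrivial is happening here, but the mechanism you propose to exploit is unavailable.
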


The precise definitions of admissible weights and regular sequences
are given in Sections \ref{subsec:Admissible-weights-and} and \ref{subsec:Carleman-type-classes}.
The following table demonstrates the relations between admissible
weights $\gamma$ and the corresponding sequence $\widehat{\gamma}$:

\begin{table}[H]
\begin{tabular}{|c|c|c|}
\hline 
$\gamma$ & $\widehat{\gamma}$ & parameters\tabularnewline
\hline 
\hline 
$\log^{\alpha n}n\cdot\log^{\beta n}\log n$ & $n!\left(\frac{2}{\pi\alpha}\log n\right)^{n}$ & $\alpha>0,\,\beta\in\mathbb{R}$\tabularnewline
\hline 
$\log^{\beta n}\log n$ & $n!\left(\frac{2}{\pi\beta}\log n\cdot\log\log n\right)^{n}$ & $\beta>0$\tabularnewline
\hline 
$\exp(n\log^{\alpha}n)$ & $n!\left(\frac{2}{\pi\alpha}\log^{1-\alpha}n\right)^{n}$ & $0<\alpha<1$\tabularnewline
\hline 
$\exp\left(\frac{n\log n}{\log^{\alpha}\log n}\right)$ & $n!\left(\frac{2}{\pi\alpha}\log\log n\right)^{n}$ & $\alpha>0$\tabularnewline
\hline 
$\Gamma\left(1+\frac{n}{\alpha}\right)$ & $n!\left(\frac{2}{\pi}\alpha\right)^{n}$ & $\alpha>0$\tabularnewline
\hline 
\end{tabular}

$ $

\caption{$\gamma$ and $\widehat{\gamma}$ correspondence.}
\end{table}
We also mention that the following sequences are regular:
\[
M_{n}=n!^{\text{\ensuremath{\alpha_{1}}}}\Gamma\left(1+\alpha_{2}n\right)\cdot\exp\left(\alpha_{3}\frac{n\log n}{\log^{\alpha_{3}}\log n}+n\log^{\alpha_{4}}n\right)\log^{\alpha_{5}n}n\cdot\log^{\alpha_{6}n}\left(\log n\right)
\]
for any $\alpha_{1},\cdots,\alpha_{6}\geq0$.
\begin{rem}
Note that when $A(M,\gamma;0_{+})$ is non-quasianalytic, the inclusion
$\mathcal{B_{\gamma}}B_{1/\eta}(M\gamma,\widehat{\gamma};0_{+})\subseteq A(M,\gamma;0_{+})$
should be understood at the level of formal power series, i.e., for
any $f\in B_{1/\eta}(M\gamma,\widehat{\gamma};0_{+})$ there exists
a function $F\in A(M,\gamma;0_{+})$ so that
\[
\frac{f^{(n)}(0)}{\gamma_{n+1}}=F^{(n)}(0),\quad\forall n\in\mathbb{Z}_{+}.
\]
\end{rem}

\subsubsection{A global result in the non-analytic case}

In what follows, we will say that a sequence $N$ is \emph{analytic}
if
\[
\limsup_{n\to\infty}\left(\frac{N_{n}}{n!}\right)^{1/n}<\infty,
\]
otherwise we will say that $N$ is non-analytic.

Our local result can be strengthened in different ways whenever the
sequence $\widehat{\gamma}$ is analytic or non-analytic. We first
present a version for the non-analytic case. 
\begin{thm}
\label{thm:Main thm non analytic}Suppose that $\gamma$ is an admissible
weight with non-analytic $\widehat{\gamma}$, $M$ is a regular sequence, $\eta>1$ and $I=\left[0,a\right)$ (with $0<a\leq+\infty)$.
Then, 
\[
\mathcal{B_{\gamma}}B_{1/\eta}(M\gamma,\widehat{\gamma};I)\subseteq A(M,\gamma;I),\quad\lpg A(M,\gamma;I)\subseteq B_{\eta}(M\gamma,\widehat{\gamma};I).
\]
If, in addition, the class $A(M,\gamma;I)$ is quasianalytic, then
\[
B_{1/\eta}(M\gamma,\widehat{\gamma};I)\subseteq\lpg A(M,\gamma;I)\subseteq B_{\eta}(M\gamma,\widehat{\gamma};I),
\]
and the $(\gamma)$--summation is applicable in the class $B_{1/\eta}(M\gamma,\widehat{\gamma};I).$
\end{thm}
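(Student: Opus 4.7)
The plan is to reduce Theorem~\ref{thm:Main thm non analytic} to the local Theorem~\ref{thm:Main thm} via direct integral estimates on compact subintervals of $I$, combined with a quasianalytic continuation argument on the entire interval. For the first two inclusions, note that $\mathcal{B}_{\gamma}$ depends only on Taylor coefficients at the origin: the germ of any $f\in B_{1/\eta}(M\gamma,\widehat{\gamma};I)$ at $0$ lies in $B_{1/\eta}(M\gamma,\widehat{\gamma};0_{+})$, so Theorem~\ref{thm:Main thm} yields $F\in A(M,\gamma;0_{+})\subseteq A(M,\gamma;I)$ with the matching Taylor coefficients, giving $\mathcal{B}_{\gamma}B_{1/\eta}(M\gamma,\widehat{\gamma};I)\subseteq A(M,\gamma;I)$. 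For $\mathcal{L}_{\gamma}A(M,\gamma;I)\subseteq B_{\eta}(M\gamma,\widehat{\gamma};I)$, I would fix $F\in A(M,\gamma;I)$ and a compact $J\subset[0,a)$, choose $\eta'>1$ with $\eta'\sup J<a$, and apply \eqref{eq:deriv of convolution equ} together with the bound $|F^{(n)}(y)|\leq C^{n+1}M_{n}E(\eta'y/a)$. The matching decay of $K$ makes the integral converge uniformly for $x\in J$, and admissibility of $\gamma$ controls the moments $\int E(\eta'xt/a)t^{n}K(t)\,dt$ to yield $|f^{(n)}(x)|\leq C_{J}^{n+1}M_{n}\gamma_{n}$; the logarithmic bound follows analogously in $\xi$-coordinates, along the lines of \eqref{eq: non-homog Carleman bdd}.

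For the converse inclusion under the quasianalyticity hypothesis, fix $f\in B_{1/\eta}(M\gamma,\widehat{\gamma};I)$. Since quasianalyticity of $A(M,\gamma;I)$ amounts to $\sum M_{n}/M_{n+1}=\infty$, the germ class $A(M,\gamma;0_{+})$ is also quasianalytic, and the applicability assertion of Theorem~\ref{thm:Main thm} produces $F\in A(M,\gamma;0_{+})\subseteq A(M,\gamma;I)$ with $\mathcal{L}_{\gamma}F=f$ on a right neighborhood of $0$. Setting $g:=\mathcal{L}_{\gamma}F-f$, the forward inclusions just established and the chain $B_{1/\eta}\subseteq B_{\eta}$ give $g\in B_{\eta}(M\gamma,\widehat{\gamma};I)$, and all derivatives of $g$ vanish at $0$. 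To conclude $g\equiv 0$ on $I$ I would invoke quasianalyticity of the non-homogeneous Carleman class $B_{\eta}(M\gamma,\widehat{\gamma};I)$: although neither $M\gamma$ (the weight controlling standard derivatives) nor $\widehat{\gamma}$ (the weight controlling logarithmic-coordinate derivatives) need individually define a quasianalytic class, their conjunction should be quasianalytic on $I$ provided $\sum M_{n}/M_{n+1}=\infty$ and $\widehat{\gamma}$ is non-analytic. Once $g\equiv 0$, the identity $\mathcal{L}_{\gamma}F=f$ holds on all of $I$, and applicability of $(\gamma)$-summation in $B_{1/\eta}(M\gamma,\widehat{\gamma};I)$ is automatic since $F=\mathcal{B}_{\gamma}f$ at the formal level.

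The principal obstacle is exactly the last step: establishing quasianalyticity of the non-homogeneous Carleman class $B_{\eta}(M\gamma,\widehat{\gamma};I)$ on all of $I$, not just as a germ at $0$. This is the technical heart of the paper's theory of non-homogeneous Carleman classes. The non-analyticity hypothesis on $\widehat{\gamma}$ is precisely what makes the argument go through: it ensures that $I$ is visible as an unbounded half-line in logarithmic coordinates, where the second (logarithmic) bound gains enough ``bite'' to compensate for the possible non-quasianalyticity of $M\gamma$ and so to propagate flatness of $g$ at the origin to identical vanishing on $I$.
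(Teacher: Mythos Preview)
Your reduction to the local Theorem~\ref{thm:Main thm} has a fatal gap: the inclusion $A(M,\gamma;0_{+})\subseteq A(M,\gamma;I)$ goes the wrong way. Under the germ interpretation (cf.\ Section~\ref{subsec:The-class A^M}, where $A(M,\gamma;0_{+})=\bigcup_{b>0}A(M,\gamma;[0,b])$), the class $A(M,\gamma;0_{+})$ is \emph{larger} than $A(M,\gamma;I)$: membership in $A(M,\gamma;I)$ with $I=[0,a)$ requires the growth bound $|F^{(n)}(x)|\leq C^{n+1}M_nE(\eta x/a)$ for every $\eta>1$, whereas the local theorem only produces $F$ satisfying $|F^{(n)}(x)|\leq C^{n+1}M_nE(x/b)$ for some possibly tiny $b>0$ determined by the germ data. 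The passage from $b$ small to $b$ arbitrarily close to $a$ is precisely the global content of Theorem~\ref{thm:Main thm non analytic}, and it cannot be obtained from the germ of $f$ at $0$ alone. (If you instead read the introduction's $\bigcap$ definition literally, then the inclusion holds but your premise fails: the germ of $f\in B_{1/\eta}(M\gamma,\widehat{\gamma};I)$ at $0$ does not lie in $\bigcap_{a>0}B_{1/\eta}(M\gamma,\widehat{\gamma};[0,a))$.)

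The paper's proof of $\mathcal{B}_{\gamma}B_{1/\eta}(M\gamma,\widehat{\gamma};I)\subseteq A(M,\gamma;I)$ is genuinely global: it builds an almost-holomorphic (Dynkin-type) extension of $f$ adapted to nested domains $U_{k,\eta,Q}$ that stretch across the \emph{entire} rescaled interval $J=[0,1]$, decomposes $f=\sum_k f_k$ with each $f_k$ holomorphic in $U_{k,\eta,Q}$, and then uses a contour-integral representation of $\mathcal{B}_{\gamma}f_k$ along curves $\Gamma(R)$ to obtain the sharp bound $|(\mathcal{B}_{\gamma}f_k)^{(n)}(x)|\lesssim C^n n!\,m_k^{n/k}E(x)$ valid for all $x\geq 0$. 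The geometry of $U_{k,\eta,Q}$---in particular that it reaches $\mathrm{Re}\,z=1$---is exactly what delivers $E(x)$ rather than $E(x/b)$ for small $b$.

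Your sketch for $\mathcal{L}_{\gamma}A(M,\gamma;I)\subseteq B_{\eta}(M\gamma,\widehat{\gamma};I)$ is in the right spirit and matches Section~\ref{subsec:1st inc}. But for the ``Moreover'' part you again invoke the bad inclusion, and you also appeal to quasianalyticity of $B_{\eta}(M\gamma,\widehat{\gamma};I)$ with $\eta>1$; the paper only proves this for $B_{1/\eta}$ (Theorem~\ref{thm: quasianalytic}), and that proof \emph{uses} Theorem~\ref{thm:Main thm non analytic}, so your route would be circular. In the paper the identity $\mathcal{L}_{\gamma}\mathcal{B}_{\gamma}f=f$ on $I$ comes for free from the construction: since $\mathcal{L}_{\gamma}\mathcal{B}_{\gamma}f_k=f_k$ holds for each analytic approximant $f_k$ and $\sum f_k\to f$ in $C^{\infty}(J)$, no quasianalyticity of the $B$-class is needed.
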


\subsubsection{A global result in the analytic case}

Our result for the analytic case requires the following definition.
\begin{defn}
Suppose that $\gamma$ is an admissible weight with analytic $\widehat{\gamma}$,
$I=[0,a)$ and $\eta>0$. Put

\[
\Omega_{\eta}\left(\gamma\right):=\left\{ z\in\mathbb{C}\setminus\left\{ 0\right\} :\sup_{t>0}E(t\eta)\left|K\left(\frac{t}{z}\right)\right|<\infty\right\} .
\]
The class $D\left(M,\gamma;I\right)$ consists of all functions $f$
such that for any $\eta>a$, $f\in\text{Hol}\left(\Omega_{\eta}\left(\gamma\right)\right)\cap C^{\infty}\left(\left\{ 0\right\} \cup\Omega_{\eta}\left(\gamma\right)\right)$
and there exists a constant $C=C_{\eta}>0$ such that
\[
\left|R_{n}(z,f)\right|\leq C^{n+1}\frac{M_{n}\gamma_{n+1}}{n!}|z|^{n},\quad\forall n\in\mathbb{Z}_{+},\,\forall z\in\Omega_{\eta}\left(\gamma\right),
\]
where $R_{n}\left(z,f\right)$ is the $n-$th order remainder of the
Taylor series of $f$ around the origin, i.e., 
\[
R_{n}(z,f):=f(z)-\sum_{0\leq k<n}\frac{f^{(k)}(0)}{k!}z^{k}.
\]

\end{defn}

\begin{example}
\label{exa: Moroz Nev}For $\alpha>\frac{1}{2}$, consider $\gamma_{n}=\Gamma\left(1+\frac{n}{\alpha}\right)$,
where $\Gamma$ is the Euler $\Gamma$-function. We have $K\left(t\right)=\exp\left(-t^{\alpha}\right)$
and by \cite[Section 3.5.3]{goldberg2008value} we also have 
\[
E(z)=\begin{cases}
\alpha\exp(z^{\alpha})+O(1/z), & |\arg z|\leq\frac{\pi}{2\alpha};\\
O(1/z), & \frac{\pi}{2\alpha}\le|\arg z|\leq\pi.
\end{cases}
\]
For any $\eta_{1}<\eta<\eta_{2},$ the set $\Omega_{\eta}\left(\gamma\right)$
satisfies (see Figure 1.1)
\[
\left\{ z:\text{Re\ensuremath{\frac{1}{z^{\alpha}}\ge\eta_{2}^{\alpha}}}\right\} \subset\Omega_{\eta}\left(\gamma\right)\subset\left\{ z:\text{Re\ensuremath{\frac{1}{z^{\alpha}}\ge\eta_{1}^{\alpha}}}\right\} 
\]

\begin{figure}[H]
\includegraphics[scale=0.4]{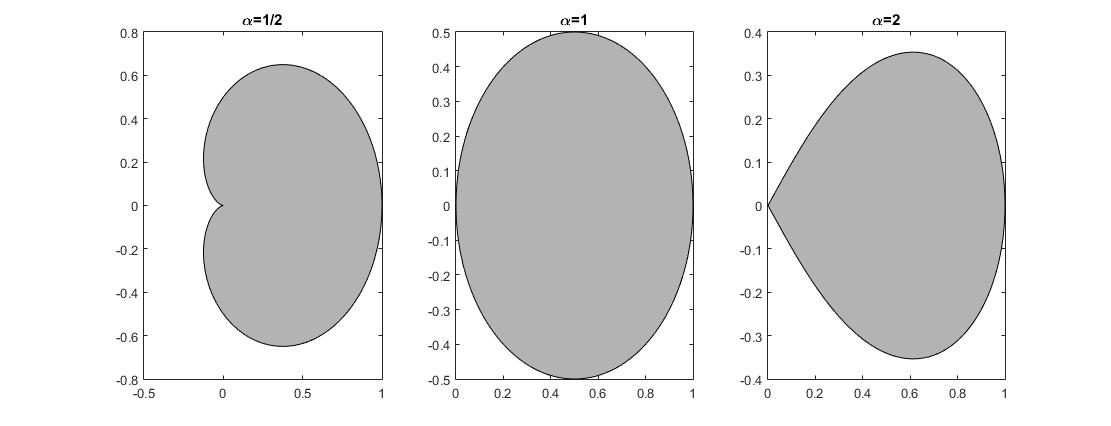}\label{fig:3 domain}\caption{$\text{\ensuremath{\Omega_{1}\left(\gamma\right)} for \ensuremath{\alpha=\frac{1}{2},\,1,\,2}}$}
\end{figure}
The next lemma demonstrates that the classes $D\left(M,\gamma;I\right)$
are also closely related to non-homogeneous Carleman classes.
\end{example}

\begin{lem}
If $\gamma$ is a weight with analytic $\widehat{\gamma}$, and $M$
is a regular sequence, then for any $\eta>0$ 
\[
B_{1/\eta}(M\gamma,\widehat{\gamma};0_{+})\subseteq\bigcup_{\delta>0}D\left(M,\gamma;[0,\delta)\right)\subseteq B_{\eta}(M\gamma,\widehat{\gamma};0_{+}).
\]
\end{lem}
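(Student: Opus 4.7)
The plan is to exploit the equivalence, valid when $\widehat{\gamma}$ is analytic, between the log-Carleman estimate (bound (2) of the $B$-class) and holomorphic extension of $f$ to a truncated sector near the origin whose half-opening is determined by $B=\limsup(\widehat{\gamma}_{n}/n!)^{1/n}<\infty$. A parallel analysis of the domains $\Omega_{\eta'}(\gamma)$ shows that, for $\eta'$ large, they sit inside such a sector, with asymptotic half-opening at most $1/B$. The whole argument is essentially a careful matching between the sector of analyticity and the shape of $\Omega_{\eta'}(\gamma)$.

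For the first inclusion, given $f\in B_{1/\eta}(M\gamma,\widehat{\gamma};0_{+})$, I would first set $g(\xi)=f(e^{\xi})$ and use bound (2), $|g^{(n)}(\xi)|\le C\eta^{-n}\widehat{\gamma}_{n}\le CA(B/\eta)^{n}n!$, to extend $g$ holomorphically to the strip $|\Im\xi|<\eta/B$ and $f$ to a truncated sector $\Sigma=\{|\arg z|<\eta/B,\ |z|<\delta_{0}\}$, with $\sup_{\Sigma}|f|<\infty$ obtained by summing the Taylor series of $g$. Next I would verify (using the asymptotics of $K$ and $E$ that underlie the definition of $\Omega_{\eta'}$) that $\Omega_{\eta'}(\gamma)$ is star-shaped at $0$ and satisfies $\Omega_{\eta'}(\gamma)\subseteq\Sigma$ for all $\eta'$ large enough, so that for $z\in\Omega_{\eta'}$ the segment $[0,z]$ lies in $\Sigma$. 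Finally I would apply Taylor's integral remainder
\[
R_{n}(z,f)=\frac{z^{n}}{(n-1)!}\int_{0}^{1}(1-s)^{n-1}f^{(n)}(sz)\,ds,
\]
and bound $|f^{(n)}(sz)|\le C^{n+1}M_{n}\gamma_{n+1}$ by combining the real-axis estimate (1) with Cauchy's formula applied on a disk around $sz$ whose radius is adjusted to the regular sequence $M\gamma$; integrating produces the desired Taylor-remainder bound, which identifies $f$ with an element of $D(M,\gamma;[0,\delta))$ for a suitably small $\delta$.

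For the reverse inclusion, given $f\in D(M,\gamma;[0,\delta))$, I would first let $z\to0$ in the defining bound on $R_{n}$ to deduce $|f^{(n)}(0)|\le C^{n+1}M_{n}\gamma_{n+1}$ and local boundedness of $f$ on $\Omega_{\eta'}(\gamma)\cap\{|z|<\rho\}$ for small $\rho$. Bound (1) on an interval $[0,a]$ then comes from Cauchy's formula for $f^{(n)}$ on disks centered at real points $x\in[0,a]$ whose radii are comparable to the distance from $x$ to $\partial\Omega_{\eta'}$, using the bound on $R_{n}$ inside such disks together with the regularity of $M\gamma$. For bound (2), I would pass to $g(\xi)=f(e^{\xi})$: the holomorphy of $f$ on the sectorial region $\Omega_{\eta'}(\gamma)$ gives holomorphy of $g$ on a strip $|\Im\xi|<\theta_{\infty}(\eta')$, whose width stays bounded below by a positive constant as $\eta'\to\infty$ precisely because $\widehat{\gamma}$ is analytic; Cauchy's formula in $\xi$ on a circle of radius $r\asymp\eta^{-1}(n!/\widehat{\gamma}_{n})^{1/n}$ (uniformly $\gtrsim 1/(\eta B)$) then gives $|g^{(n)}(\xi)|\le C'\eta^{n}\widehat{\gamma}_{n}$, as required.

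The main obstacle is the sharp estimate of $|f^{(n)}|$ along the complex segment $[0,z]$ in the first inclusion: a crude Cauchy bound based only on $\sup_{\Sigma}|f|$ produces a factor $n!/r^{n}$ that is incompatible with the target $M_{n}\gamma_{n+1}$. Overcoming this requires choosing the Cauchy radius as a function of $n$ so as to exploit the regularity of $M\gamma$ (essentially an associated-function argument from Carleman class theory), and verifying that $\Omega_{\eta'}(\gamma)$ has the precise sectorial asymptotic shape predicted by $\widehat{\gamma}$; this is where the analyticity hypothesis on $\widehat{\gamma}$ plays its decisive role.
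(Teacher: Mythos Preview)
The paper does not give a self-contained proof of this lemma; it is stated in the introduction and is meant to follow by combining Theorem~\ref{thm:Main thm} with Theorem~\ref{thm:Main thm analytic}: from $f\in B_{1/\eta}(M\gamma,\widehat{\gamma};0_+)$ one gets $\mathcal{B}_\gamma f\in A(M,\gamma;0_+)$, and then $\mathcal{L}_\gamma\mathcal{B}_\gamma f\in D(M,\gamma;[0,\delta))$ via the integral representation in \S\ref{subsec:1st inc} and \S4.3; the second inclusion is obtained the same way in reverse. Your proposal is a genuinely different, direct attack that avoids the Borel--Laplace machinery altogether.

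Your plan for the second inclusion is sound, and in fact easier than you make it: the paper gives two equivalent definitions of $D(M,\gamma;I)$, one via remainders and one via $\bigl|f^{(n)}(z)\bigr|\le C^{n+1}M_n\gamma_{n+1}$ on $\overline{\Omega}_\eta(\gamma)$ (see \S\ref{subsec:Carleman-type-classes}). With the derivative version, bound~(1) on a real interval is immediate, and bound~(2) follows from the Cauchy estimate in the strip exactly as you say, since the angular opening of $\Omega_{\eta'}(\gamma)$ at $0$ is $1/\tau(\widehat{\gamma})$.

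The first inclusion, however, has a real gap at precisely the step you flag. You want $\bigl|f^{(n)}(sz)\bigr|\le C^{n+1}M_n\gamma_{n+1}$ for $sz$ on a complex ray, starting only from the same bound on the real interval and boundedness of $f$ in a slightly wider sector. A Cauchy estimate on a disk $D(sz,r)$ inside $\Sigma$ gives only $n!\,\lVert f\rVert_\infty/r^n$; choosing $r$ ``adjusted to $M\gamma$'' cannot recover $M_n\gamma_{n+1}$ because the Cauchy bound carries no information about $M\gamma$. Trying instead to Taylor-expand $f^{(n)}$ around a real base point $x$ leads to $\sum_k C^{n+k}M_{n+k}\gamma_{n+k+1}|w-x|^k/k!$, and for non-analytic $M\gamma$ (which is the generic case here, since $\gamma_n^{1/n}\to\infty$) this series diverges for every $|w-x|>0$. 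The ``associated function'' heuristic does not rescue this: it controls $\bar\partial$ of a pseudoanalytic extension, not the propagation of Carleman bounds into a sector. In short, knowing $f\in C_\infty^{M\gamma}$ on $[0,\delta_0]$ together with analyticity and boundedness in a surrounding sector does \emph{not} force $f\in C_\infty^{M\gamma}(\overline{\Omega}_{\eta'})$; one needs the extra structure coming from $f=\mathcal{L}_\gamma F$, which is exactly what the paper's route supplies.
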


\begin{thm}
\label{thm:Main thm analytic}Suppose that $\gamma$ is an admissible
weight with analytic $\widehat{\gamma}$, $M$ is a regular sequence,
$I=[0,a)$ and $\eta>1$. We have
\[
\bbg D\left(M,\gamma;I\right)\subseteq A\left(M,\gamma;I\right)\quad\text{and \ensuremath{\quad}}\lpg A\left(M,\gamma;I\right)\subset D\left(M,\gamma;I\right).
\]
Moreover, if the class $A\left(M,\gamma;I\right)$ is quasianalytic,
then $D\left(M,\gamma;I\right)$ is quasianalytic, $\lpg$$:A\left(M,\gamma;I\right)\to D\left(M,\gamma;I\right)$
is a \emph{bijection} with inverse $\bbg$, and
\[
\left(\bbg f\right)(z)=\frac{1}{2\pi i}\int_{\partial\Omega_{\eta}\left(\gamma\right)}f(w)E\left(\frac{z}{w}\right)\frac{dw}{w},\quad\forall z\in\Omega_{\eta}\left(\gamma\right).
\]
\end{thm}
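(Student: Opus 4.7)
The strategy is to globalize Theorem \ref{thm:Main thm} along the complex variable in $\Omega_\eta(\gamma)$, using the contour-integral representation for $\bbg$ as the main technical tool.

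For the inclusion $\lpg A(M,\gamma;I) \subseteq D(M,\gamma;I)$, I start with $F \in A(M,\gamma;I)$ and compute the Taylor coefficients and remainders of $f = \lpg F$ at the origin on the real axis. From \eqref{eq:deriv of convolution equ} and the moment identity $\int_0^\infty t^k K(t)\,dt = \gamma_{k+1}$, the Taylor coefficients are $f^{(k)}(0) = F^{(k)}(0)\,\gamma_{k+1}$. Writing Taylor's formula with integral remainder for $F$ inside the Laplace integral gives
\[
R_n(x,f) = \int_0^\infty R_n(xt, F)\, K(t)\, dt,
\]
and inserting the bound $|R_n(xt,F)| \leq C^{n+1} M_n (xt)^n E(\eta_0\, xt/a)/n!$ produces $|R_n(x,f)| \leq C_1^{n+1} M_n \gamma_{n+1} x^n / n!$. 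The function $f$ then extends analytically to $\Omega_\eta(\gamma)$ via contour deformation of the Laplace integral: the defining inequality of $\Omega_\eta(\gamma)$ ensures that $|K(t/z)|$ decays fast enough along the relevant ray to control the $A$-class growth of $F$, and the remainder estimate upgrades to hold for all $z \in \Omega_\eta(\gamma)$.

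For $\bbg D(M,\gamma;I) \subseteq A(M,\gamma;I)$, I would take the contour formula as the working definition of $\bbg f$ on a right neighborhood of the origin, and verify the $A$-class bound from it. Splitting $f$ on $\partial\Omega_{\eta'}(\gamma)$ as its order-$n$ Taylor polynomial plus $R_n(\cdot,f)$, the polynomial part reproduces the first $n$ terms of the formal series $\sum f^{(k)}(0)\,z^k/(k!\,\gamma_{k+1})$ by residue calculus (using that $E$ is the $\bbg$-image of the Cauchy kernel), while the remainder contour integral is estimated using the $D$-class bound $|R_n(w,f)| \leq C^{n+1} M_n \gamma_{n+1}\,|w|^n/n!$ together with the size of $E(z/w)$ on $\partial\Omega_{\eta'}(\gamma)$. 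Passing derivatives in $z$ onto $E(z/w)$ and tracking the geometric factors then yields $|(\bbg f)^{(n)}(x)| \leq C^{n+1} M_n E(\eta x/a)$.

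Once both inclusions are in hand, the quasianalyticity of $A$ transfers to $D$: a function $f \in D$ with vanishing Taylor series at $0$ satisfies $|f(z)| \leq C^{n+1} M_n \gamma_{n+1}\,|z|^n/n!$ on $\Omega_\eta(\gamma)$ for every $n$, and restricting to a real segment yields an $A$-type estimate forcing $f \equiv 0$ on the segment (and hence on $\Omega_\eta(\gamma)$ by analytic continuation). Bijectivity then follows formally: both $\bbg\circ\lpg$ and $\lpg\circ\bbg$ preserve Taylor coefficients at $0$ and land in quasianalytic classes by the two inclusions, hence coincide with the identity. The contour formula is verified on $\lpg A$ by substituting $f=\lpg F$, expanding $E(z/w)=\sum z^k/(w^k\,\gamma_{k+1})$, and integrating term by term against $f(w)/w$.

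The hard part will be the second inclusion. The geometry of $\partial\Omega_{\eta'}(\gamma)$ varies significantly with $\gamma$ (compare the three regimes in Figure \ref{fig:3 domain}), and the contour estimate must simultaneously balance the polynomial factor from differentiating $E(z/w)$, the $M_n \gamma_{n+1}/n!$ remainder factor from the $D$-class bound, and the size of $E(z/w)$ on $\partial\Omega_{\eta'}(\gamma)$. Producing the precise right-hand side $C^{n+1} M_n E(\eta x/a)$ for every $\eta > 1$ requires choosing the contour in a Laplace-type steepest-descent fashion that exploits the defining matching between the growth of $E$ and the decay of $K$.
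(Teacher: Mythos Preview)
Your treatment of the inclusion $\lpg A(M,\gamma;I)\subseteq D(M,\gamma;I)$ is essentially the paper's argument (Section~4.3): extend $f=\lpg F$ holomorphically to $\Omega_\eta(\gamma)$ via the defining bound on $K(t/z)$, push the Taylor remainder of $F$ through the Laplace integral, and estimate using Lemma~\ref{lem:three E inq}. Your handling of quasianalyticity, bijectivity, and the contour formula for $\bbg$ is also standard and in line with the paper.

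The second inclusion $\bbg D(M,\gamma;I)\subseteq A(M,\gamma;I)$ is where your plan diverges from the paper, and there is a real gap. You propose to work directly on $\partial\Omega_{\eta'}(\gamma)$, split $f=P_{n-1}+R_n$, and pass $z$--derivatives onto $E(z/w)$. The splitting does identify the Taylor coefficients of $\bbg f$ correctly (because $R_n(w,f)w^{-k-1}$ is analytic in $\Omega_{\eta'}$ for $k<n$), but the derivative estimate fails: after $n$ differentiations the remainder term is
\[
\int_{\partial\Omega_{\eta'}} R_n(w,f)\,w^{-n-1}\,E^{(n)}\!\left(\tfrac{x}{w}\right)\,dw,
\]
and bounding $|R_n(w,f)|\,|w|^{-n}$ by $C^{n+1}M_n\gamma_{n+1}/n!$ leaves $\int_{\partial\Omega_{\eta'}}|E^{(n)}(x/w)|\,|dw|/|w|$. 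Under $u=x/w$ this becomes an integral of $|E^{(n)}(u)|/|u|$ along the image contour, which (already in the classical case $E=e^u$, where the image is a vertical line) diverges. Even if one instead transfers derivatives to $f$ via the change of variables $u=x/w$, the resulting contour $x/\partial\Omega_{\eta'}$ scales with $x$, and the bound one obtains is $C^{n+1}M_n\gamma_{n+1}\,(\eta'x)^{-n}E(\eta' x)$, which blows up as $x\to 0^+$ and does not give the uniform $A$--class estimate.

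The underlying obstruction is that $f\in D(M,\gamma;I)$ is holomorphic only in the \emph{sectorial} region $\Omega_\eta(\gamma)$ with vertex at $0$, not in a full neighborhood of the origin. Any contour representing $\bbg f$ must keep its image inside $\overline{\Omega}_\eta$, hence is forced to shrink with $x$; a steepest--descent deformation of $\partial\Omega_{\eta'}$ cannot escape this. The paper resolves this by a genuinely different mechanism: Dynkin's almost--holomorphic extension (Lemma~\ref{lem:aprox by analytic function-the analytic case}) yields an approximation $f=\sum_k f_k$ where each $f_k$ is analytic in a full neighborhood of $\overline{\Omega}_\eta$ of width $\sim (Qm_k^{1/k}L(k))^{-1}$, hence in a disk about $0$. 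For such $f_k$ one may use the closed contour $\Gamma(R,R')$ and then $\Gamma(R)$ with $R=\max\{L(k),\eta x\}$, which stays bounded away from $0$ even for small $x$; this is Lemma~\ref{lem:Main estimate genrlize borel analytic}. The Cauchy estimates from the neighborhood produce the factor $m_k^{n/k}$, and summing over $k$ (with $\|f_k\|\le C_4^{-k}e^{-3k}$ and Claim~\ref{claim:m_k reg first claim}) reconstitutes $M_n$. Your proposal would need an analogous device to decouple the contour from $x$ near the origin; as written, it does not provide one.
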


\subsection{F. Nevanlinna and Beurling theorems\label{subsec:F.-Nevanlinna-and}}

One natural domain of definition for the generalized Laplace transform
$\mathcal{L}_{\gamma}$, which was considered by many authors \cite{nevanlinna1918theorie,beurling1989collected},
is the space of all functions $F$, analytic in some half strip $\left\{ z:\left|\text{Im}(z)\right|\le C^{-1},\,\text{Re}\left(z\right)\geq0\right\} $
and satisfying therein
\[
\left|F(z)\right|=O\left(E(\eta|z|)\right),\quad\forall\eta>1.
\]
Note that by Cauchy estimates for derivatives of analytic functions,
this space coincides with $A\left(n!,\gamma;\left[0,1\right)\right).$

Next, we compare two classical descriptions of $\mathcal{L}_{\gamma}A\left(n!,\gamma;\left[0,1\right)\right)$ in the cases 
$\gamma_{n+1}=n!$ and  $\gamma_{n+1}=\log^{n}\left(n+e\right).$
These results (and their extensions) were among our motivations for this work.
We mention that both of these results and their extensions mentioned
below are particular cases of Theorems \ref{thm:Main thm non analytic}
and \ref{thm:Main thm analytic}.
\begin{thm*}[F. Nevanlinna \cite{nevanlinna1918theorie}, see also Sokal \cite{sokal1980improvement}]
Set $\gamma_{n+1}=n!.$ The Laplace transform $\mathcal{L}_{\gamma}$
maps the space $A\left(n!,\gamma;\left[0,1\right)\right)$ bijectively
onto the subspace of $C^{\infty}\left[0,1\right)$ consisting of all
functions, $f,$ such that for any $\eta>1$, the function $f$ is
holomorphic in the disk $\Omega_{\eta}=\left\{ z:|2\eta z-1| <1\right\},$
and there exists a constant $C=C_{\eta}>0$ such that

\begin{equation}
\left|f^{\left(n\right)}\left(z\right)\right|\leq C^{n+1}n!\gamma_{n+1},\quad\forall n\in\mathbb{Z}_{+},\,\forall z\in\overline{\Omega_{\eta}}.\label{eq:Carleman type inq Navanlinna}
\end{equation}
Moreover, for any such $f$, $\mathcal{L}_{\gamma}\mathcal{B}_{\gamma}f=f$
in $\left[0,1\right)$.
\end{thm*}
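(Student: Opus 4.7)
The plan is to deduce F.~Nevanlinna's theorem as the specialization of Theorem \ref{thm:Main thm analytic} to the parameters $\gamma_{n+1}=n!$, $M_{n}=n!$, and $I=[0,1)$. For this classical weight the moment kernel is $K(t)=e^{-t}$, the generating entire function is $E(z)=e^{z}$, and
\[
\Omega_{\eta}(\gamma)=\Bigl\{z\neq 0:\sup_{t>0}e^{t\eta}\bigl|e^{-t/z}\bigr|<\infty\Bigr\}=\bigl\{z:\operatorname{Re}(1/z)\ge\eta\bigr\}
\]
coincides with the closed Nevanlinna disk $\{|2\eta z-1|\le 1\}$, whose interior is the open $\Omega_{\eta}$ of the statement. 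Cauchy's estimates in a thin half-strip translate the growth condition defining $A(n!,\gamma;[0,1))$ into F.~Nevanlinna's original requirement that $F$ be holomorphic in $\{\operatorname{dist}(z,\mathbb{R}_{+})\le\varepsilon\}$ with $|F(z)|=O(e^{\eta|z|})$ for every $\eta>1$. Finally, the Cauchy-type inversion formula from Theorem \ref{thm:Main thm analytic} collapses, since $E(z)=e^{z}$, to the classical Borel transform $(\mathcal{B}_{\gamma}f)(z)=(2\pi i)^{-1}\int_{\partial\Omega_{\eta}(\gamma)}f(w)e^{z/w}\,\frac{dw}{w}$.

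The crux of the argument is to identify the target class $D(n!,\gamma;[0,1))$ with the subspace of $C^{\infty}[0,1)$ described in F.~Nevanlinna's statement. Since $M_{n}\gamma_{n+1}/n!=n!$, the $D$-inequality reads $|R_{n}(z,f)|\le C^{n+1}n!\,|z|^{n}$ on every $\Omega_{\eta}(\gamma)$, $\eta>1$. F.~Nevanlinna's derivative bound $|f^{(n)}(z)|\le C^{n+1}(n!)^{2}$ on $\overline{\Omega_{\eta}}$ yields this at once by integrating $f^{(n)}$ along the segment $[0,z]$ via Taylor's integral formula. For the converse I would apply Cauchy's formula to the $n$-th remainder: since the $(n-1)$-st Taylor polynomial is annihilated by $n$ derivatives, $f^{(n)}(z)=R_{n}^{(n)}(z,f)$, and integrating over a circle $|w-z|=\rho$ contained in $\Omega_{\eta_{0}}(\gamma)$ for some $\eta_{0}<\eta$ gives
\[
|f^{(n)}(z)|\le\frac{n!}{\rho^{n}}\sup_{|w-z|=\rho}|R_{n}(w,f)|\le C^{n+1}n!\cdot\frac{n!(|z|+\rho)^{n}}{\rho^{n}},
\]
which collapses to $C'^{n+1}(n!)^{2}$ provided $\rho$ is chosen proportional to the local gap between $\partial\Omega_{\eta}$ and $\partial\Omega_{\eta_{0}}$, so that $(|z|+\rho)/\rho$ stays uniformly bounded on $\overline{\Omega_{\eta}}$.

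It remains to verify that $\gamma_{n+1}=n!$ is an admissible weight, that $M_{n}=n!$ is a regular sequence, and that $A(n!,\gamma;[0,1))$ is quasianalytic; the last follows from $\sum M_{n}/M_{n+1}=\sum 1/(n+1)=\infty$ via the Denjoy--Carleman criterion recalled in the introduction. Theorem \ref{thm:Main thm analytic} then delivers the bijection $\mathcal{L}_{\gamma}:A(n!,\gamma;[0,1))\to D(n!,\gamma;[0,1))$ with inverse $\mathcal{B}_{\gamma}$, and the class identification of the previous paragraph converts this into F.~Nevanlinna's bijection together with the applicability relation $\mathcal{L}_{\gamma}\mathcal{B}_{\gamma}f=f$ on $[0,1)$. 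The principal obstacle is the Cauchy-on-the-remainder estimate in the second paragraph: the disks $\Omega_{\eta_{0}}(\gamma)\supset\Omega_{\eta}(\gamma)$ are internally tangent at $z=0$, so the available integration radius $\rho$ collapses as $z\to 0$, and one must control the dependence of $\rho$ on the position of $z$ in $\overline{\Omega_{\eta}}$ carefully so that the Gevrey-1 constants remain independent of $|z|$ all the way up to the tangency point.
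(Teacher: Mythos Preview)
Your approach is exactly what the paper does: it explicitly declares F.~Nevanlinna's theorem to be a particular case of Theorem~\ref{thm:Main thm analytic} (see the sentence immediately preceding the two quoted theorems in \S\ref{subsec:F.-Nevanlinna-and}), and your identifications of $K$, $E$, $\Omega_\eta(\gamma)$, the admissibility of $\gamma$, the regularity of $M_n=n!$, and the quasianalyticity of $A(n!,\gamma;[0,1))$ are all correct.

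The one place where you do more than the paper is in trying to prove directly that the remainder-based and derivative-based descriptions of $D(n!,\gamma;[0,1))$ coincide. The paper simply \emph{asserts} this equivalence in \S\ref{subsec:Carleman-type-classes} (Section~2.4.4) without proof. Your flagged obstacle is real: the Euclidean distance from $z\in\overline{\Omega_\eta}$ to $\partial\Omega_{\eta_0}$ is only of order $(\eta-\eta_0)|z|^2$ near the tangency point $0$, so with $\rho$ chosen as that distance the factor $(|z|+\rho)/\rho\sim 1/|z|$ is unbounded and your Cauchy-on-the-remainder estimate does \emph{not} close. The equivalence is nevertheless true, but the paper obtains it indirectly through the very bijection it proves: \S4.3 shows $\mathcal L_\gamma A\subseteq D_{\mathrm{rem}}$ (remainder form), \S6.2 shows $\mathcal B_\gamma D_{\mathrm{der}}\subseteq A$ (derivative form, via Dynkin's extension on $C_\infty^{M\gamma}(\overline{\Omega_\eta})$), and quasianalyticity then forces $D_{\mathrm{rem}}=D_{\mathrm{der}}=\mathcal L_\gamma A$. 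So rather than repairing the direct Cauchy argument, just cite the paper's two equivalent definitions of $D$; that is how the specialization to Nevanlinna's statement is meant to be read.
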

\begin{figure}[H]
\includegraphics{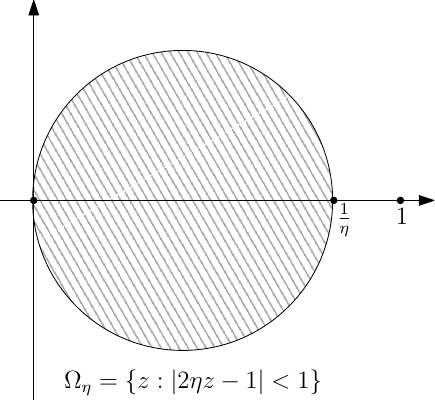}\caption{The disk $\Omega_{\eta}$}
\end{figure}
\begin{thm*}[{Beurling \cite[pp. 420--429]{beurling1989collected} }]
Set $\gamma_{n+1}=\log^{n}\left(n+e\right).$ The generalized Laplace
transform $\mathcal{L}_{\gamma}$ maps the space $A\left(n!,\gamma;\left[0,1\right)\right)$
bijectively onto the subspace of $C^{\infty}\left[0,1\right)$ consisting
of all functions, $f,$ such that for any $\eta>1$ there exists a
constant $C=C_{\eta}>0$ such that
\begin{equation}
\left|f^{\left(n\right)}\left(x\right)\right|\leq C^{n+1}n!\gamma_{n+1},\quad\forall n\in\mathbb{Z}_{+},\,\forall0\leq x\leq\frac{1}{\eta}.\label{eq:Careleman type inq Beurling}
\end{equation}
Moreover, for any such $f$, $\mathcal{L}_{\gamma}\mathcal{B}_{\gamma}f=f$
in $\left[0,1\right)$.
\end{thm*}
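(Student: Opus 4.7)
The plan is to derive Beurling's theorem as a particular case of Theorem~\ref{thm:Main thm non analytic}. First I verify its hypotheses for the Beurling setup $\gamma_{n+1}=\log^n(n+e)$, $M_n=n!$, $I=[0,1)$: Beurling \cite[pp.~420--429]{beurling1989collected} constructs an explicit moment kernel $K$ for $\gamma$ satisfying the matching $\log E\sim-\log K\sim e^t/t$, so $\gamma$ is admissible; the row $\alpha=1$, $\beta=0$ of the table gives $\widehat{\gamma}_n\asymp n!\bigl(\tfrac{2}{\pi}\log n\bigr)^n$, which is non-analytic since $(\widehat{\gamma}_n/n!)^{1/n}\to\infty$; $M_n=n!$ is regular; and $\sum M_n/M_{n+1}=\sum 1/(n+1)=\infty$, so $A(n!,\gamma;[0,1))$ is quasianalytic. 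Theorem~\ref{thm:Main thm non analytic} then gives, for every $\eta>1$,
\[
B_{1/\eta}(n!\gamma,\widehat{\gamma};[0,1)) \;\subseteq\; \mathcal{L}_\gamma A(n!,\gamma;[0,1)) \;\subseteq\; B_\eta(n!\gamma,\widehat{\gamma};[0,1)),
\]
together with $\mathcal{L}_\gamma\mathcal{B}_\gamma f = f$ on $[0,1)$ for every $f$ in the image; injectivity of $\mathcal{L}_\gamma$ follows from quasianalyticity of $A$ (since $\mathcal{B}_\gamma\mathcal{L}_\gamma F$ recovers the Taylor series of $F$ at $0$).

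The substantive Beurling-specific step is to match Beurling's target class---defined by the single Carleman estimate \eqref{eq:Careleman type inq Beurling}---with the sandwiched image $\mathcal{L}_\gamma A(n!,\gamma;[0,1))$. One direction is automatic: condition~(1) in the definition of $B_\eta$ is precisely \eqref{eq:Careleman type inq Beurling}, so every $f\in\mathcal{L}_\gamma A$ satisfies Beurling's bound. The reverse direction requires showing that if $f$ satisfies \eqref{eq:Careleman type inq Beurling} on each compact $J\Subset[0,1)$, then the logarithmic Carleman bound
\[
\Bigl|\tfrac{d^n}{d\xi^n}f(e^\xi)\Bigr| \le C\,\eta^n\,\widehat{\gamma}_n
\]
(i.e., condition~(2)) holds with suitable $C$ and some $\eta>0$, placing $f$ in $B_{1/\eta_0}(n!\gamma,\widehat{\gamma};[0,1))\subseteq\mathcal{L}_\gamma A$ for an appropriate $\eta_0>1$. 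My tool is Fa\`a di Bruno,
\[
\frac{d^n}{d\xi^n}f(e^\xi) \;=\; \sum_{k=1}^n S(n,k)\,e^{k\xi}\,f^{(k)}(e^\xi),
\]
together with sharp estimates on $\sum_k S(n,k)\,k!\,z^k$. Via its exponential generating function $1/(1-z(e^x-1))$ and a saddle-point analysis at the singularity $x_\ast=\log(1+1/z)$, I expect $\sum_k S(n,k)\,k!\,z^k \asymp n!/(\log(1+1/z))^n$ as $z\to\infty$; substituting $z$ proportional to $e^\xi\log n$ should deliver the logarithmic bound with precisely the $2/\pi$ constant inside $\widehat{\gamma}$.

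With the classes identified, the sandwich collapses and Beurling's theorem follows. I expect the main obstacle to be this last step: securing the sharp $\pi/2$ constant and the correct range of $\eta$ in the combinatorial estimate. The $\pi/2$ is suspicious---it reflects a genuine complex-analytic input (an extremal angle of a Phragm\'en--Lindel\"of-type argument near the half-strip $\{|\mathrm{Im}\,z|<C^{-1}\}$ where functions in $A(n!,\gamma;[0,1))$ extend analytically) rather than a soft combinatorial bound, and handling it carefully is what aligns the conclusion with Beurling's original statement on the half-open interval $[0,1)$.
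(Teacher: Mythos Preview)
Your strategy—specialize Theorem \ref{thm:Main thm non analytic} to $M_n=n!$, $\gamma_{n+1}=\log^n(n+e)$, $I=[0,1)$—is exactly how the paper derives Beurling's theorem, and your hypothesis check is correct. You also correctly isolate the only nontrivial step: identifying Beurling's Carleman class $C_\infty^{M\gamma}([0,1))$ with the sandwiched image $\mathcal{L}_\gamma A$.

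Where you diverge from the paper is in how you attack this identification. Your Fa\`a di Bruno/saddle-point plan for $\sum_k S(n,k)\,k!\,z^k$ is both harder than needed and will not deliver what you want. The paper's route (\S\ref{subsec: log th}) is a one-line observation: since $M_n\gamma_{n+1}=n!\log^n(n+e)$ and $\widehat{\gamma}_n\approx n!(\tfrac{2}{\pi}\log n)^n$ differ only by the geometric factor $(\pi/2)^n$, the condition $M_n\gamma_{n+1}\lesssim C^n\widehat{\gamma}_n$ holds, and then $B_{1/\eta}(M\gamma,\widehat{\gamma};0_+)=B_\eta(M\gamma,\widehat{\gamma};0_+)=C_\infty^{M\gamma}(0_+)$. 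The mechanism is elementary: condition (1) on $[0,a]$ gives $|x^jf^{(j)}(x)|\le (aC)^j\,C\,M_j\gamma_{j+1}$, and the inductive Stirling bound already in the proof of Lemma \ref{lem:exp change of var in Carleman class} (no saddle point required) converts this into condition (2) with $\eta\approx aC\cdot\tfrac{\pi}{2}$. For right germs one shrinks $a$ until $\eta<1$, and the sandwich collapses. The $\pi/2$ you flag as ``suspicious'' is not a constant to be extracted combinatorially—it is simply the ratio $(M_n\gamma_{n+1}/\widehat{\gamma}_n)^{1/n}$, built into the very definition of $\widehat{\gamma}$.

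Your instinct that something is delicate on the full interval $[0,1)$ is, however, correct: there $a$ cannot be shrunk, and the bound $\eta\approx aC\cdot\tfrac{\pi}{2}$ carries the Carleman constant $C$ of $f$, so it need not be $<1$. The paper acknowledges this—see the remark after Beurling's statement, which says the half-open version is new and follows from Theorem \ref{thm:Main thm non analytic}—but does not spell out the passage from germs to $[0,1)$. Your proposed saddle-point refinement will not close this gap either, since the constant it produces still depends on $f$; a self-contained argument on $[0,1)$ would have to go back to the direct estimates of Sections 5--6 rather than try to squeeze the $B_{1/\eta}$ side of the sandwich.
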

Although inequalities (\ref{eq:Carleman type inq Navanlinna}) and
(\ref{eq:Careleman type inq Beurling}) are identical, the main difference
between $\mathcal{L}_{\gamma}A\left(\gamma\right)$ in the Nevanlinna
and Beurling cases is the extra analyticity in the former case.

For the sequence $\gamma_{n+1}=\Gamma\left(1+\tfrac{n}{\beta}\right)$,
with $\beta>\frac{1}{2}$, partial results in the spirit of Nevanlinna's
theorem were obtained by Moroz \cite{moroz1990summability}. In this
case, the domain of analyticity should be modified from the disc $\Omega_{\eta}$
to a sector with a vertex at the origin and opening $\pi/\beta$ (see
Example \ref{exa: Moroz Nev} and Figure 1.1, c.f. discusion in  \cite[\S4.5]{costin}).

An extension of Beurling's theorem was obtained by the author \cite[Theorems 1 and 3]{kiro2018taylor},
where it was shown that the same conclusion is true for admissible weights
$\gamma$, if and only if $\gamma_{n}^{1/n}=O\left(\log n\right)$
as $n\to\infty.$ This logarithmic threshold was another motivation
for this work (see discussion in \ref{subsec: log th}).

Next, we demonstrate how our result reads in the Nevanlinna case
when $\gamma_{n+1}=n!.$ 
\begin{cor}
Set $\gamma_{n+1}=n!$ and let $M$ be a regular sequence. The Laplace
transform $\mathcal{L}_{\gamma}$ maps the space $A\left(M,\gamma;\left[0,1\right)\right)$
bijectively onto the subspace of $C^{\infty}\left[0,1\right)$ consisting
of all functions, $f,$ such that for any $\eta>1$, the function
$f$ is holomorphic in the disk $\Omega_{\eta}=\left\{ z:|2\eta z-1|\right\} <1,$
and there exists a constant $C=C_{\eta}>0$ such that
\[
\left|f^{\left(n\right)}\left(z\right)\right|\leq C^{n+1}M_{n}\gamma_{n+1},\quad\forall n\in\mathbb{Z}_{+},\,\forall z\in\overline{\Omega_{\eta}}
\]
Moreover, for any such $f$, $\mathcal{L}_{\gamma}\mathcal{B}_{\gamma}f=f$
in $\left[0,1\right)$.
\end{cor}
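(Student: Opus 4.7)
My plan is to deduce the corollary as a specialization of Theorem \ref{thm:Main thm analytic} to $\gamma_{n+1}=n!$ (so $K(t)=e^{-t}$ and $E(z)=e^{z}$), combined with a translation of the defining bound of $D(M,\gamma;[0,1))$---formulated there in terms of Taylor remainders around $0$---into the pointwise derivative bounds appearing in the corollary. The hypotheses of Theorem \ref{thm:Main thm analytic} (admissibility of $\gamma$, analyticity of $\widehat{\gamma}$, regularity of $M$) are either granted or read off the row $\alpha=1$ of the $(\gamma,\widehat{\gamma})$-table; the quasianalyticity of $A(M,\gamma;[0,1))$ needed to activate the bijection part of that theorem is supplied by the regularity of $M$.

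The first step is to identify $\Omega_{\eta}(\gamma)$ explicitly. With $E(z)=e^{z}$ and $K(t)=e^{-t}$,
\[
\sup_{t>0}E(t\eta)\bigl|K(t/z)\bigr| \;=\; \sup_{t>0}\exp\bigl(t(\eta-\mathrm{Re}(1/z))\bigr),
\]
which is finite precisely when $\mathrm{Re}(1/z)\geq\eta$. A routine rewriting turns this into $|2\eta z-1|\leq 1$, so $\Omega_{\eta}(\gamma)$ coincides (modulo the origin) with the closed disk $\overline{\Omega_{\eta}}$ of the corollary.

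The second step is to match the two descriptions of the target class. Because $\gamma_{n+1}/n!=1$ in our setting, the condition defining $D(M,\gamma;[0,1))$ reduces to demanding $|R_{n}(z,f)|\leq C^{n+1}M_{n}|z|^{n}$ for $z\in\overline{\Omega_{\eta}}$. The implication from derivative bounds to remainder bounds is immediate from the integral form
\[
R_{n}(z,f) \;=\; \frac{z^{n}}{(n-1)!}\int_{0}^{1}(1-s)^{n-1}f^{(n)}(sz)\,ds,
\]
which is available because $\overline{\Omega_{\eta}}$ is convex and contains $0$. The reverse direction---recovering derivative bounds on a slightly smaller disk $\overline{\Omega_{\eta'}}$ ($\eta'>\eta$) from remainder bounds on $\overline{\Omega_{\eta}}$---is the content of the second inclusion in the lemma preceding Theorem \ref{thm:Main thm analytic}; it proceeds by Cauchy's integral formula on small circles fitted inside $\Omega_{\eta}$, combined with an optimization of a truncation index weighing the Taylor polynomial against the remainder.

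The delicate point is the behavior near the common tangent point $0\in\partial\Omega_{\eta}$: the radius of the available Cauchy circle shrinks linearly with the distance to the origin, which makes the naive estimate degenerate. That degeneracy is exactly compensated by the $|z|^{n}$ factor in the remainder bound, and after the optimization just mentioned---which leans on the log-convexity and the other regularity properties of $M$---one recovers $|f^{(n)}(z)|\leq (C')^{n+1}M_{n}n!$ uniformly on $\overline{\Omega_{\eta'}}$. Since the corollary quantifies over all $\eta>1$, the loss from $\eta$ to $\eta'$ is harmless, and the bijection $\mathcal{L}_{\gamma}\colon A(M,\gamma;[0,1))\to D(M,\gamma;[0,1))$ with inverse $\mathcal{B}_{\gamma}$, supplied by Theorem \ref{thm:Main thm analytic}, transfers directly to the bijection asserted in the corollary, including the reconstruction identity $\mathcal{L}_{\gamma}\mathcal{B}_{\gamma}f=f$ on $[0,1)$.
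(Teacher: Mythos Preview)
Your overall approach is correct and matches the paper's: the corollary is simply the specialization of Theorem~\ref{thm:Main thm analytic} to $\gamma_{n+1}=n!$, together with the explicit identification of $\Omega_\eta(\gamma)$ with the disk $\{|2\eta z-1|\le 1\}$ and the equivalence between the remainder and the derivative descriptions of $D(M,\gamma;I)$, which the paper records in \S2.4.4.

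There is one genuine slip. You write that ``the quasianalyticity of $A(M,\gamma;[0,1))$ \ldots\ is supplied by the regularity of $M$.'' This is false: regularity (log-convexity of $(m_n)$, monotonicity of $m_n^{1/n}$, and the bound $m_{n+1}\le Cm_n^{1+1/n}$) does not force $\sum M_n/M_{n+1}=\infty$; e.g.\ $M_n=(n!)^2$ is regular but non-quasianalytic. The bijection part of Theorem~\ref{thm:Main thm analytic} and the identity $\mathcal{L}_\gamma\mathcal{B}_\gamma f=f$ genuinely require this divergence condition, so you should state it as an additional hypothesis (the corollary as printed omits it too, so this is arguably a gap in the statement), not derive it from regularity.

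A smaller point: the equivalence you need between remainder bounds and derivative bounds on $\overline{\Omega_\eta}$ is not the content of the lemma preceding Theorem~\ref{thm:Main thm analytic}---that lemma compares $D$ with the $B$-classes at germ level, not with the derivative form of $D$. The equivalence is asserted directly in \S2.4.4; your Cauchy-circle sketch is the right mechanism, but the citation should be adjusted.
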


\begin{rem}
Beurling's result and its extensions are originally given for functions
$f$ in defined on an open interval that contains the origin. The version
presented above for the half open interval $[0,1)$ is new and follows
from Theorem \ref{thm:Main thm non analytic}.
\end{rem}

\subsubsection{Structure of this work}

The rest of this work is organized as follows. In the next section,
we define all the classes of smooth functions that will be used in
this work and present some auxiliary results regarding the asymptotic
behavior of the functions $K$ and $E.$ Section 3 is devoted to Non-homogeneous
Carleman classes and quasianalyticity. In Section 4 we prove bounds
of the form (\ref{eq: Carleman A type bdd}) and (\ref{eq: non-homog Carleman bdd})
for the generalized Laplace transform. Section 5 is devoted to extensions
of Dynkin's representation of functions in Carleman classes to the
non-homogeneous case. Section 6 is devoted to bounds on the
derivatives of the generalized Borel transform of functions belonging
to non-homogeneous Carleman classes. Finally, Section 7 is devoted
to applications including ones related to Euler-type differential equations, multi-summability, and resurgent theory.

\section{Basic notions}

\subsection{Notation}

We shall use the symbol  $C$ to denote   large positive constants which may change their values in different occurrences. If a constant $C$ depends on some  parameter $p$, we will write $C_p$ (again the value can change in different   occurrences). Given two functions $f$ and $g$ with the same domain of definition, we write $f\lesssim g$ whenever $f(x)\leq Cg(x)$ for some constant $C$. If the constant $C$ in the last inequality   depends on some parameter $p$, we will write $f\lesssim_p g$. We use the notation $f\asymp g$ ($f\asymp_p g$) if $f\lesssim g$ and $g\lesssim f$ ($f\lesssim_p g$ and $g\lesssim_p f$). If for some set $\Pi$ we have $f\vert_\Pi \lesssim g\vert_\Pi$, we will write $f\lesssim g$ on $\Pi$, and we will do the same for $\asymp$, $\lesssim_p$ and $\asymp_p$. If $\lim_{x\to\infty} \frac{f(x)}{g(x)}=1$, we will write $f\sim g$. For a function $f$ defined on the positive ray, we will say that a property $P$ is eventually satisfied if there exists $\rho_0$ such that  $P$  is  satisfied in the interval $[\rho_0,\infty)$. Given two sequences $M$ and $N$, we will write $M\approx N$ if $M_n^{1/n}\sim N_n^{1/n}$.

\subsection{Admissible weights and the sequence $\widehat{\gamma}$\label{subsec:Admissible-weights-and}}

Hereafter, we consider the sequence $\left(\gamma_{n}\right)_{n\in\mathbb{N}}$
as the values of a function $\gamma$ on $\mathbb{N}$ (i.e., $\gamma_{n}=\text{\ensuremath{\gamma(n)}})$.
We assume that the function $\gamma$ is analytic and non-vanishing
in the angle $\left\{ s:\left|\arg(s+c)<\alpha_{0}\right|\right\} $
with $\frac{\pi}{2}<\alpha_{0}$ and $c>0,$ and is positive on $(-c,\infty).$
We put 
\[
L(s)=\gamma(s)^{1/s}\quad\text{and}\quad\varepsilon(s)=s\frac{L^{\prime}(s)}{L(s)}.
\]

\begin{defn}
We call the function $\gamma$ admissible if $\varepsilon$ is positive
and bounded on $\mathbb{R}_{+}$, and satisfies the following conditions:
\end{defn}

\begin{itemize}
\item[(A)]  $\int^{\infty}\frac{\varepsilon(\rho)}{\rho}\mathrm{d}\rho=\infty,$
\item[(B)]  $\varepsilon(\lambda\rho)\sim\varepsilon(\rho)$ as $\rho\to\infty$
locally uniformly for $\lambda\in\left\{ s:\left|\arg(s)<\alpha_{0}\right|\right\} ,$
\item[(C)] $\lim_{\rho\to\infty}\varepsilon(r)$ exists and is smaller than $2.$
\item[(D)] If $\lim_{\rho\to\infty}\varepsilon(\rho)=0$, we assume that $\varepsilon$
is eventually decreasing and $\left|\varepsilon^{\prime}(\rho)\right|\gtrsim_{\delta}e^{-\delta\rho}$,
for any $\delta>0$.
\end{itemize}
It is not hard to show that under these assumptions we have 
\begin{equation}
\log\left|\gamma\left(\rho e^{i\theta}\right)\right|=\cos\theta\log L(\rho)+\theta\sin\theta\varepsilon\left(\rho\right)+o\left(\varepsilon\left(\rho\right)\right),\label{eq:gamma asymp full}
\end{equation}
locally uniformly for $\theta\in\left(-\alpha_{0},\alpha_{0}\right)$
as $\rho\to\infty.$ Given an admissible $\gamma$, we put 
\[
\widehat{\gamma}_{n}=\sup_{\rho>0}\rho^{n}\left|\gamma\left(i\rho\right)\right|=\sup_{\rho>0}\rho^{n}\exp\left(-\left(1+o(1)\right)\frac{\pi}{2}\rho\varepsilon(\rho)\right)
\]
The sequence $\widehat{\gamma}_{n}$ occurs naturally while estimating
the term $\frac{d^{n}}{dt^{n}}\left(t^{n}K(t)\right)$ that appears
in (\ref{eq:deriv of convolution equ}) (see Lemma \ref{lem: K_1 deriv estimate}
and its proof).

\subsection{The functions $K$ and $E$}

In this section we present results from \cite{KIROSODIN} regarding
the asymptotic behavior of the functions $K$ and $E.$

\subsubsection{The saddle point equation}

The asymptotics of the functions $K$ and $E$ for large $z$ are
determined by the saddle-point of the function $s\mapsto\log\gamma(s)-s\log z=s\log L(s)-s\log z$,
that is, by the equation
\begin{equation}
\log L(s)+s\frac{L'(s)}{L(s)}=\log z.\label{eq:saddlePoint}
\end{equation}

For $0<\alpha<\alpha_{0}$ and $\rho_{0}>0$, put
\[
S(\alpha,\rho_{0})=\{s:\;|\arg(s)|<\alpha,\;|s|>\rho_{0}\}.
\]
Then, it is not difficult to show that under our admissibility assumption,
the left-hand side of the saddle-point equation (\ref{eq:saddlePoint}) is a
univalent function in $S(\alpha,\rho_{0})$ (see \cite[Section 1.3]{KIROSODIN}).
From here on, we assume that this is the case, and put
\[
\Omega(\alpha)=\left\{ z\colon\log z=\log L(s)+s\frac{L'(s)}{L(s)},\,s\in S(\alpha,\rho_{0})\right\} .
\]
In general, this is a domain in the Riemann surface of $\log z$,
but by choosing $\rho_{0}$ sufficiently large, we can treat it as
a subdomain of the slit plane $\mathbb{C}\setminus\mathbb{R}_{-}$,
provided that
\[
\limsup_{\rho\to\infty}\varepsilon(\rho)<\frac{\pi}{\alpha},
\]

In what follows, we denote by $s_{z}=\rho_{z}e^{{\rm i}\theta_{z}}$
the unique solution of the saddle-point equation (\ref{eq:saddlePoint}).

\subsubsection{Asymptotic behavior of the functions $K$ and $E$\label{subsec:Asymptotic-behavior-of K and E}}
\begin{thm*}[A]
Suppose that $\gamma$ is admissible. Then, for any $\delta>0$,
the function $K$ is analytic in $\Omega(\alpha_{0}-\delta)$ and
\[
K(z)=\left(1+o(1)\right)\sqrt{\frac{s}{2\pi\varepsilon(s)}}\exp\left(-s\varepsilon(s)\right),\quad z\to\infty,
\]
uniformly in $\Omega(\alpha_{0}-\delta)$. Here $s=s_{z}$ and the
branch of the square root is positive on the positive half-line.
\end{thm*}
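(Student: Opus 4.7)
The plan is to recover $K$ as an inverse Mellin transform and then apply the saddle-point method to the resulting contour integral. Since $\int_{0}^{\infty} t^{s-1} K(t)\,dt$ interpolates $\gamma(s)$ at positive integers and $\gamma$ is analytic and well-controlled in the sector $|\arg(s+c)|<\alpha_{0}$, standard arguments (Carlson's theorem together with the vertical-line decay implied by \eqref{eq:gamma asymp full}) give
\[
K(z)=\frac{1}{2\pi i}\int_{c-i\infty}^{c+i\infty}\gamma(s)\,z^{-s}\,ds,\qquad c>0,
\]
and this integral extends analytically in $z$ as far as one can deform the contour without crossing the poles. The integrand has exponent $\varphi(s):=s\log L(s)-s\log z$, whose critical equation $\varphi'(s)=0$ is precisely the saddle-point equation \eqref{eq:saddlePoint}; by the construction of $\Omega(\alpha)$, for $z\in\Omega(\alpha_{0}-\delta)$ this equation has a unique solution $s_{z}\in S(\alpha_{0}-\delta,\rho_{0})$. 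A one-line computation using $\varepsilon(s)=sL'(s)/L(s)$ yields
\[
\varphi(s_{z})=-s_{z}\varepsilon(s_{z}),\qquad \varphi''(s_{z})=\frac{\varepsilon(s_{z})}{s_{z}}+\varepsilon'(s_{z}),
\]
which already matches both pieces of the claimed asymptotic provided we can show $s_{z}\varepsilon'(s_{z})=o(\varepsilon(s_{z}))$. This last fact is exactly what condition (B) gives: local uniform asymptotic equality $\varepsilon(\lambda s)\sim\varepsilon(s)$ forces $\varepsilon$ to be ``slowly varying,'' whence $s\varepsilon'(s)/\varepsilon(s)\to0$ as $|s|\to\infty$ in $S(\alpha_{0}-\delta,\rho_{0})$. (When $\varepsilon(\rho)\to0$, condition (D) is used to prevent $\varepsilon'$ from decaying so rapidly that the Gaussian approximation degenerates.)

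The second step is to execute the saddle-point descent. I would deform the Mellin line $\{\mathrm{Re}\,s=c\}$, using analyticity of $\gamma$ in the sector $|\arg(s+c)|<\alpha_{0}$ together with the decay estimate from \eqref{eq:gamma asymp full} on vertical segments, to a new contour $\Gamma_{z}$ that passes through $s_{z}$ in the steepest-descent direction. Since $\varphi''(s_{z})>0$, this direction is the line $s=s_{z}+i\xi$, $\xi\in\mathbb{R}$ (locally rotated in the obvious way when $s_z$ is complex). On the local piece $|\xi|\le R:=(s_z/\varepsilon(s_z))^{1/2}\log(s_z/\varepsilon(s_z))$, Taylor expansion gives
\[
\varphi(s_{z}+i\xi)=\varphi(s_{z})-\tfrac{1}{2}\varphi''(s_{z})\xi^{2}+O\bigl(|\varphi'''(\tilde{s})|\,|\xi|^{3}\bigr),
\]
and a routine differentiation of \eqref{eq:gamma asymp full} shows that the cubic remainder is negligible on this scale. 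The Gaussian integral then yields
\[
\frac{1}{2\pi i}\int_{-R}^{R} e^{\varphi(s_{z})-\frac{\xi^{2}}{2}\varphi''(s_{z})}\,i\,d\xi\;=\;\bigl(1+o(1)\bigr)\,e^{-s_{z}\varepsilon(s_{z})}\sqrt{\frac{s_{z}}{2\pi\varepsilon(s_{z})}}.
\]

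The third step is to bound the tail contribution $\int_{\Gamma_{z}\setminus\{|\xi|\le R\}}$. Here one shows, again using \eqref{eq:gamma asymp full}, that $\mathrm{Re}\,\varphi$ decreases by more than $C\log(s_{z}/\varepsilon(s_{z}))$ off the local window, so the tail is smaller than the main term by a power of $s_{z}/\varepsilon(s_{z})$ and is absorbed into the $o(1)$. Finally, uniformity over $\Omega(\alpha_{0}-\delta)$ follows because condition (B) is locally uniform in $\arg s$ and because $s_{z}$ depends analytically on $z$ and stays at bounded angular distance from the boundary rays $\arg s=\pm\alpha_{0}$.

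The main obstacle I expect is the uniformity, not the leading calculation: as $\arg z$ approaches $\pm\alpha_{0}$, the contour $\Gamma_{z}$ must be rotated so that its tails still run into regions where $\gamma(s)z^{-s}$ decays, and the cubic-error estimate for $\varphi$ must hold with constants independent of $\arg s_{z}$. This is exactly why the sector of analyticity of $\gamma$ is taken strictly larger than $\pi/2$ (so that $\alpha_{0}-\delta$ still exceeds $\pi/2$), and why condition (B) is phrased to hold locally uniformly in the complex sector $|\arg\lambda|<\alpha_{0}$ rather than just on the real axis. The case $\varepsilon(\rho)\to 0$ is a separate delicate regime where one must verify that the natural width $\sqrt{s_z/\varepsilon(s_z)}$ of the saddle still lies inside the region of validity of the expansion \eqref{eq:gamma asymp full}, which is where condition (D) is needed.
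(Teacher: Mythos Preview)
The paper does not prove Theorem~A: it is quoted from the companion paper \cite{KIROSODIN} (see the opening line of \S\ref{subsec:Asymptotic-behavior-of K and E}), so there is no in-paper proof to compare against. Your outline is the standard saddle-point derivation and is, as far as it goes, correct: the Mellin inversion representation you start from is exactly the one the paper uses later (beginning of \S4.1), the critical-point computation $\varphi(s_z)=-s_z\varepsilon(s_z)$, $\varphi''(s_z)=\varepsilon(s_z)/s_z+\varepsilon'(s_z)$ is right, and the slow-variation consequence $s\varepsilon'(s)=o(\varepsilon(s))$ of condition~(B) is what makes the Gaussian contribution come out as $\sqrt{s_z/(2\pi\varepsilon(s_z))}$.

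One small correction: your remark about condition~(D) is misdirected. For the saddle-point method you need an \emph{upper} bound $|\varepsilon'(s)|=o(\varepsilon(s)/s)$ so that $\varphi''(s_z)\sim\varepsilon(s_z)/s_z$; that comes from~(B). Condition~(D) is a \emph{lower} bound on $|\varepsilon'|$ and plays no role in the asymptotics of $K$---in this paper it is used only to control the spacing of the sets $V_{k,\eta,Q}$ in the almost-holomorphic extension arguments of \S5. The genuine work in making your sketch uniform over $\Omega(\alpha_0-\delta)$ is, as you correctly flag, the contour deformation and tail estimate when $\arg s_z$ is close to $\pm(\alpha_0-\delta)$; that is presumably carried out in \cite{KIROSODIN}.
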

\begin{thm*}[B]
Suppose that $\gamma$ is admissible. Then, given a sufficiently
small $\delta>0$, we have
\[
zE(z)=\left(1+o(1)\right)\sqrt{2\pi\frac{s}{\varepsilon(s)}}\exp\left(s\varepsilon(s)\right)+o(1),\quad z\to\infty,
\]
uniformly in $\Omega(\tfrac{\pi}{2}+\delta),$ and
\[
zE(z)=O(1),\quad z\to\infty
\]
uniformly in $\mathbb{C}\setminus\Omega(\tfrac{\pi}{2}+\delta).$
Here $s=s_{z}$ and the branch of the square root is positive on the
positive half-line.
\end{thm*}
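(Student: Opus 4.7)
I would reduce the statement to a saddle-point analysis of a Mellin--Barnes representation of $E$. The starting point is the identity
\[
E(z)\;=\;\frac{1}{2\pi i}\int_{\Gamma}\frac{\pi}{\sin(\pi w)}\,\frac{z^{w}}{\gamma(w+1)}\,dw ,
\]
where $\Gamma$ is a Hankel-type contour enclosing the non-negative integers; collecting residues at $w=n$ reproduces the series $\sum z^{n}/\gamma_{n+1}$. Admissibility of $\gamma$ (analyticity and non-vanishing in $|\arg(s+c)|<\alpha_{0}$ with $\alpha_{0}>\pi/2$, together with the uniform asymptotic (\ref{eq:gamma asymp full})) guarantees the decay of the integrand along vertical segments and legitimises the representation. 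Writing the integrand as $e^{\phi(w)}q(w)$ with $\phi(w)=w\log z-\log\gamma(w+1)$, the saddle equation $\phi'(w)=0$ is (up to the shift by one) exactly the saddle equation (\ref{eq:saddlePoint}), whose unique solution in the admissible sector is $s=s_{z}$.

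For $z\in\Omega(\tfrac{\pi}{2}+\delta)$ I would deform $\Gamma$ to the steepest-descent path through $s_{z}$; the admissibility condition $\limsup\varepsilon<2$ combined with property (B) ensures that this path lies in $S(\alpha_{0}-\delta',\rho_{0})$ where (\ref{eq:gamma asymp full}) is uniform. Direct computation using (\ref{eq:gamma asymp full}) yields $\phi(s_{z})=s\varepsilon(s)-\log z$ and $\phi''(s_{z})\asymp \varepsilon(s)/s$, so the classical Laplace method produces
\[
E(z)\;=\;(1+o(1))\sqrt{\frac{2\pi}{|\phi''(s_{z})|}}\,e^{\phi(s_{z})}\;=\;(1+o(1))\,\frac{1}{z}\sqrt{\frac{2\pi s}{\varepsilon(s)}}\exp(s\varepsilon(s)),
\]
locally uniformly in $\Omega(\tfrac{\pi}{2}+\delta)$. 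The additional $o(1)/z$ term absorbs the bounded residues picked up while sliding $\Gamma$ and the tails of the steepest-descent path, whose contribution is genuinely $o(1)$ (not just $O(1)$) because (A) prevents $\varepsilon$ from decaying too quickly. For $z\notin \Omega(\tfrac{\pi}{2}+\delta)$ the saddle $s_{z}$ exits the admissible sector, so there is no growing contribution. I would instead keep $\Gamma$ on a fixed vertical line $\{\mathrm{Re}\,w=\sigma\}$ with small $\sigma>0$; along this line $|\gamma(w+1)|$ grows exponentially in $|\mathrm{Im}\,w|$ by (\ref{eq:gamma asymp full}), $|1/\sin(\pi w)|$ decays exponentially, and $|z^{w}|=|z|^{\sigma}$ contributes a factor absorbed by shrinking $\sigma\downarrow 0$, yielding $zE(z)=O(1)$ uniformly.

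The main obstacle, in my view, is the \emph{uniform} matching of these two regimes as $z$ approaches $\partial\Omega(\tfrac{\pi}{2}+\delta)$. On that boundary the saddle $s_{z}$ tends to the edge of the admissible angle, the steepest-descent direction rotates toward tangency with $\Gamma$, and the curvature $|\phi''(s_{z})|$ needs careful control so that the Laplace error estimate $(1+o(1))$ remains valid uniformly. This will rely on the locally uniform version of property (B), on the separation hypothesis $\limsup\varepsilon<2$ (condition (C)), and, in the degenerate case $\varepsilon(\rho)\to 0$, on the quantitative lower bound $|\varepsilon'(\rho)|\gtrsim_{\delta}e^{-\delta\rho}$ from (D), which rules out pathological second-order cancellation at the saddle. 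Once these uniform estimates are established, stitching the two regimes together along $\partial\Omega(\tfrac{\pi}{2}+\delta)$ completes the proof.
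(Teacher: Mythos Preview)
The paper does not prove Theorem~B at all: it is quoted verbatim from the companion paper \cite{KIROSODIN} (see the opening line of \S\ref{subsec:Asymptotic-behavior-of K and E}, ``In this section we present results from \cite{KIROSODIN} regarding the asymptotic behavior of the functions $K$ and $E$''). So there is no in-paper argument to compare against; your outline is effectively a proposed proof of a cited result.

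That said, your strategy is the natural one, and the saddle-point bookkeeping is correct: with $\phi(w)=w\log z-\log\gamma(w+1)$ one indeed finds $\phi(s_z)=s\varepsilon(s)-\log z$ and $-\phi''(s_z)\sim\varepsilon(s)/s$ (using that $\varepsilon$ is slowly varying), which reproduces the main term $zE(z)\sim\sqrt{2\pi s/\varepsilon(s)}\,e^{s\varepsilon(s)}$. The identification of the transitional difficulty near $\partial\Omega(\tfrac{\pi}{2}+\delta)$ is also on point.

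There is, however, a genuine gap in the plan. In your Mellin--Barnes representation the factor $\pi/\sin(\pi w)$ has simple poles at \emph{every} non-negative integer, and for large positive $z$ the saddle $s_z$ sits far out on (or near) the positive real axis, right among those poles. You cannot simply ``deform $\Gamma$ to the steepest-descent path through $s_z$'' as stated: any such deformation either crosses infinitely many poles or must thread between them, and on the real axis $\pi/\sin(\pi w)$ is not a slowly varying amplitude to which the Laplace method applies---it oscillates with unit period and blows up at each integer. The standard remedy (which you should make explicit) is to split the Hankel contour into its upper and lower pieces and push them to $\mathrm{Im}\,w=\pm T$; there $\pi/\sin(\pi w)=\mp 2\pi i\,e^{\pm i\pi w}(1+O(e^{-2\pi T}))$, the pole structure disappears, and one is left with a pole-free integral $\int e^{\phi(w)\pm i\pi w}\,dw$ whose saddle (shifted by $O(1)$ from $s_z$) can be treated by steepest descent in the usual way. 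The same manoeuvre is needed for the $O(1)$ bound outside $\Omega(\tfrac{\pi}{2}+\delta)$: a bare vertical line $\mathrm{Re}\,w=\sigma>0$ fails to enclose the pole at $w=0$, and a line at $\sigma\in(-1,0)$ requires you to control $|(-z)^w|=|z|^{\sigma}e^{-\arg(-z)\,\mathrm{Im}\,w}$ against the growth of $1/|\gamma(w+1)\sin(\pi w)|$, which only works once the $e^{\pm i\pi w}$ from $1/\sin(\pi w)$ has been extracted. Until this is addressed, the argument is a heuristic rather than a proof.
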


\subsection{Carleman type classes\label{subsec:Carleman-type-classes}}

Throughout this work, we will work with the two-point compactification
of the real line $[-\infty,\infty].$ Given a positive sequence $M$,
a closed interval $J\subseteq[-\infty,\infty],$ and a positive constant
$\mu>0$, the\emph{ Carleman type} class $C_{\mu}^{M}(J)$ consists
of all functions $f\in C^{\infty}(J)$ so that
\[
\left|f^{(n)}(x)\right|\leq C(f)\cdot\mu^{n}M_{n},\quad\forall x\in J\cap\mathbb{R},\,n\in\mathbb{Z}_{\geq0}.
\]
Given an arbitrary interval $I$, put 
\[
C_{\mu}^{M}(I)=\bigcup_{J\subseteq I}C_{\mu}^{M}(J),
\]
where the union is over all closed sub-intervals $J\subseteq I$.
We also denote by 
\[
C_{\infty}^{M}(I)=\bigcup_{0<\mu<\infty}C_{\mu}^{M}(I),\quad\text{and\ensuremath{\quad}}C_{0}^{M}(I)=\bigcap_{0<\mu<\infty}C_{\mu}^{M}(I),
\]
the \emph{Carleman and Beurling classes} respectively. Given $a\in(-\infty,\infty)$,
we denote the set of all \emph{right germs} and \emph{germs} of the
class $C_{\mu}^{M}$ at the point $a$ by $C_{\mu}^{M}(a_{+})$ and
$C_{\mu}^{M}(a)$ respectively, i.e.,
\[
C_{\mu}^{M}(a_{+}):=\bigcap_{\delta>0}C_{\mu}^{M}([a,a+\delta]),\quad\text{and}\quad C_{\mu}^{M}(a):=\bigcap_{\delta>0}C_{\mu}^{M}([a-\delta,a+\delta]).
\]
Finally, given a perfect compact set $\mathbb{E}\subset\mathbb{C}$
the \emph{analytic-Carleman} class $C_{\infty}^{M}(\mathbb{E})$ consists
of all functions $f\in\text{Hol\ensuremath{\left(\mathbb{E}^{\circ}\right)\cap C^{\infty}\left(\mathbb{E}\right)}}$
so that
\[
\left|f^{(n)}(z)\right|\leq C_{f}^{n+1}M_{n},\quad\forall z\in\mathbb{E},\,n\in\mathbb{Z}_{\geq0}
\]

From here on, all the sequences, $M$, will be assumed to be regular
in the following sense.
\begin{defn}
We say that a sequence of positive numbers, $M=(n!m_{n})_{n\geq0},$
is \emph{regular} if the following hold:
\end{defn}

\begin{enumerate}
\item $\left(m_{n}\right)$ is an eventually log-convex sequence (i.e., $m_{n}^{2}\leq m_{n-1}m_{n+1}$
for all sufficiently large $n$).
\item $\left(m_{n}^{1/n}\right)$ is eventually non-decreasing.
\item There exists a $C>0$ such that $m_{n+1}\leq Cm_{n}^{1+1/n}$ for any
$n\geq0$
\end{enumerate}
Put $\tau(M)=\lim m_{n}^{1/n}$ and note that if $\tau(M)<\infty$,
then $C^{M}(I)$ contains only analytic functions, while if $\tau(M)=\infty$,
then $C^{M}(I)$ also contains non-analytic functions. Therefore we
will refer to sequences $M$ with $\tau(M)<\infty$ as \emph{analytic
}and with $\tau(M)=\infty$ as \emph{non-analytic.}

The following criterion for quasianalyticity is due to Denjoy and
Carleman.
\begin{thm*}[Denjoy--Carleman \cite{carleman1926fonctions}]
Suppose that $M=\left(M_{n}\right)_{n\geq0}$ is a positive and log-convex
sequence. The class $C_{\mu}^{M}(I)$ (for $0\leq\mu\leq\infty$) is quasianalytic
if and only if 
\[
\sum_{n\geq0}\frac{M_{n}}{M_{n+1}}=+\infty.
\]
\end{thm*}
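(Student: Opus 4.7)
The plan is to prove both implications separately. First, a reduction: by replacing $M_n$ with $\mu^n M_n$, which preserves log-convexity and scales the ratio $M_n/M_{n+1}$ by $1/\mu$, it suffices to treat the case $\mu = 1$ (and then pass to $\mu = 0$ and $\mu = \infty$ by monotonicity). A useful preliminary is the equivalence, valid under log-convexity, between $\sum M_n/M_{n+1} = \infty$ and the divergence of the logarithmic integral $\int_1^\infty \log T(r)\, r^{-2}\,dr$, where $T(r)=\sup_{n\geq 0} r^n/M_n$ is Ostrowski's associated function. This follows from the observation that $\log T(r)$ is a piecewise-linear convex function of $\log r$ whose slopes increase by $1$ precisely at $r = M_{n+1}/M_n$, so a direct integration by parts relates the two quantities.

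\textbf{Necessity (convergent sum yields non-quasianalyticity).} I would produce a nontrivial, compactly supported $\phi \in C^\infty(\mathbb{R})$ with $|\phi^{(n)}| \leq C^{n+1} M_n$. Choose a slowly increasing auxiliary sequence $b_k \to \infty$ such that $\sum b_k M_{k-1}/M_k < \infty$ (possible, since $\sum M_{k-1}/M_k$ converges), and set $a_k = b_k M_{k-1}/M_k$. Define
\[
\phi = \phi_1 * \phi_2 * \phi_3 * \cdots, \qquad \phi_k = \tfrac{1}{2a_k}\mathbf{1}_{[-a_k,a_k]}.
\]
The infinite convolution converges uniformly to a nontrivial function supported in $\left[-\sum a_k,\sum a_k\right]$. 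Distributing $n$ derivatives one to each of the first $n$ factors and using that each $\phi_k'$ is a measure of total mass $1/a_k$ yields $\|\phi^{(n)}\|_\infty \leq \prod_{k=1}^n a_k^{-1} = M_n/(M_0 \prod_{k=1}^n b_k)$, which is within the Beurling bound. A translate of $\phi$ therefore gives a nontrivial element of $C^M_\mu(I)$ vanishing to infinite order at some point.

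\textbf{Sufficiency (divergent sum yields quasianalyticity).} I would use Bang's method. Suppose $f \in C^M_1(J)$ with $f^{(n)}(c)=0$ for all $n\geq 0$; normalize so $c=0$ and $[0,L]\subset J$ with $|f^{(k)}|\leq M_k$ on $[0,L]$. Set $\beta_k = M_k/M_{k+1}$. Bang's inequality asserts that if $0\leq x\leq \beta_0+\cdots+\beta_{n-1}$ then $|f^{(k)}(x)|\leq e^{-1}M_k$ for every $0\leq k\leq n$. This is proved by induction on $n$: the mean value theorem applied to $f^{(k)}$ vanishing at $0$ locates $y\in[0,x]$ with $|f^{(k+1)}(y)|\leq|f^{(k)}(x)|/x$, and log-convexity ensures that integrating these sharpened bounds back yields the claimed improvement. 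Iterating the inequality replaces $e^{-1}$ by $e^{-m}$ for any $m$, so $f\equiv 0$ on $[0,\sum\beta_k)$. Under the divergence hypothesis this is all of $[0,L]$; the same argument on the left of $c$, followed by a continuation along $J$ (treating each new boundary of the vanishing set as the next $c$), gives $f\equiv 0$ on $J$.

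\textbf{Main obstacle.} The technical heart of the argument is setting up Bang's inductive estimate so that each application genuinely improves the bound by a factor $e^{-1}$ on the same interval rather than retreating to a smaller sub-interval. Tracking the constants requires careful use of log-convexity to align the improved bound on $f^{(k+1)}$ coming from an interior zero with the assumed bound on $f^{(k)}$ that one integrates against. An alternative would be the Fourier-transform route via Levinson's log-integrability theorem, which converts quasianalyticity into a classical uniqueness theorem for entire functions of exponential type; this avoids Bang's bookkeeping but trades it for the problem of cutting $f$ off to a compactly supported test function without degrading the Carleman bounds, which is itself nontrivial and essentially requires the necessity direction as a preliminary step.
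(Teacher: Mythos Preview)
The paper does not prove this statement at all; it is quoted as the classical Denjoy--Carleman theorem with a citation to Carleman's 1926 monograph and used as a black box throughout. Your proposal is therefore not comparable to anything in the paper, but it is a correct outline of one of the standard proofs: the infinite-convolution construction for necessity and Bang's lemma for sufficiency are both well known and the sketch you give is accurate, including the identification of the bookkeeping in Bang's induction as the main technical point. There is nothing to correct.
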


\subsubsection{Weighted Carleman classes: the class $A\left(M,\gamma,I\right)$\label{subsec:The-class A^M}}

As we already mentioned, we will consider $\mathcal{L}_{\gamma}$
on a more general domain of definition than $A$$\left(\gamma\right)$:
\begin{defn}
Let $\gamma$ be as before, $M$ be a positive sequence and let $J=\left[0,a\right]$
with $0<a<+\infty$. The class $A\left(M,\gamma,J\right)$ consists
of all functions $F\in C^{\infty}\left[0,\infty\right)$ such that
there exists a constant $C=C_{J}>0$ so that 
\[
\left|F^{(n)}\left(x\right)\right|\leq C^{n+1}M_{n}E\left(\frac{x}{a}\right).
\]
For an interval $J=\left[-a,0\right]$, we put 
\[
A\left(M,\gamma,J\right):=\left\{ F\in C^{\infty}\left(-\infty,0\right]\,:\,F(-x)\in A\left(M,\gamma,-J\right)\right\} .
\]
For a general interval $0\in I\subseteq\mathbb{R}$ (not necessarily
closed), we put 
\[
A\left(M,\gamma,I\right):=\bigcup_{J\subseteq I}A\left(M,\gamma,J\right),
\]
where the union is taken over all compact $J\subseteq I$ such that
$J\subseteq\left[0,\infty\right)$ or $J\subseteq\left(-\infty,0\right]$.
Finally, we put 
\[
A\left(M,\gamma,0_{+}\right):=\bigcup_{b>0}A\left(M,\gamma,\text{\ensuremath{\left[0,b\right]}}\right),\quad A\left(M,\gamma,0\right):=\bigcup_{b>0}A\left(M,\gamma,\text{\ensuremath{\left[-b,b\right]}}\right)
\]
\end{defn}

\begin{rem}
As we have already mentioned $A\left(\gamma\right)=A\left(M,\gamma,I\right)$
with $M_{n}=n!$ and $I=[0,1)$. Note that the elements of $A\left(M,\gamma,I\right)$
do not have to be analytic, but for $F\in A\left(M,\gamma,I\right)$
the function $\mathcal{L}_{\gamma}F$ is a well-defined $C^{\infty}\left(I\right)$
function.
\end{rem}

The next three classes we introduce will play an important role in
the description of $\mathcal{L}_{\gamma}A(M,\gamma,I)$.

\subsubsection{Carleman class under exponential change of variables: the class $B(M,N;I)$}

Given an interval $I$, put $\log I_{+}=\left\{ \log x:x\in I\cap[0,\infty)\right\} $
and $\log I_{-}=\left\{ \log\left(-x\right):x\in I\cap\left(-\infty,0\right]\right\} $

Let $M$ and $N$ be two regular sequences and let $I$\emph{ }be
an interval. The \emph{class} $B_{\mu}(M,N;I)$ consists of all functions
$f\in C^{\infty}(I)$ such that $f(x)\in C_{\infty}^{M}(I),$ $f(e^{x})\in C_{\mu}^{N}(\log I_{+})$
and $f\left(-e^{x}\right)\in C_{\mu}^{N}(\log I_{-})$. Similarly,
we define $B_{\eta}(M,N;0_{+})$ and $B_{\eta}(M,N;0)$ as the sets
of right-germs and germs at the origin respectively.

\subsubsection{Non-homogeneous Carleman classes}

The non-homogeneous Carleman class $C_{\mu}(M,N;I)$ consists of all
functions $f\in C^{\infty}(I)$, such that there exists a $C=C(f)>0$
such that 
\[
\left|f^{(n)}(x)\right|\leq C\cdot\min\left\{ C^{n}M_{n}\,,\,\frac{\mu^{n}\cdot N_{n}}{\left|x\right|^{n}}\right\} ,\quad\forall x\in I,n\in\mathbb{Z}_{\geq0}.
\]
If $I=[0,a]$, then non-homogeneous Carleman classes are closely related
to $B_{\mu}(M,N;I)$ as the next lemma demonstrates.
\begin{lem}
\label{lem:non-homg and B}Let $I=[0,a]$, and let $M$ and $N$ be
two regular sequences with $\tau\left(N\right)\geq\frac{2}{\pi}$. 
Then for any $\eta>1$
\[
B_{1/\eta}(M,N;I)\subseteq C_{1}(M,N^{*};I)\subseteq B_{\eta}(M,N;I).
\]
Here $N^{*}=N$ in the non-analytic case, and $N_{n}^{*}=n!\left(\sin\frac{1}{\tau(N)}\right)^{-n}$
in the analytic case.
\end{lem}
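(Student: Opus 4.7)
The plan is to reduce both inclusions to the Faà di Bruno / Stirling-number identities
\[
g^{(n)}(\xi) = \sum_{k=1}^{n} S(n,k)\, e^{k\xi}\, f^{(k)}(e^\xi), \qquad x^n f^{(n)}(x) = \sum_{k=1}^{n} s(n,k)\, g^{(k)}(\log x),
\]
where $g(\xi):=f(e^\xi)$ and $S(n,k)$, $s(n,k)$ are the Stirling numbers of the second and (signed) first kind. The pointwise bound $|f^{(n)}(x)|\le C^{n+1}M_n$ passes through both inclusions via the $M_n$-component of the minimum in $C_1(M,N^*;I)$, so the actual content is to transfer between the bounds $|g^{(n)}(\xi)|\le C\mu^n N_n$ and $|x|^n|f^{(n)}(x)|\le C N^*_n$.

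In the non-analytic case ($\tau(N)=\infty$, $N^*=N$), I would substitute the assumed derivative bounds into the two identities and control the resulting combinatorial sums via the exponential generating functions
\[
\sum_{n\ge 0}\frac{z^n}{n!}\sum_k |s(n,k)| y^k k! = \frac{1}{1 - y\log\frac{1}{1-z}}, \qquad \sum_{n\ge 0}\frac{z^n}{n!}\sum_k S(n,k) y^k k! = \frac{1}{1 - y(e^z - 1)},
\]
whose dominant singularities lie at $1-e^{-1/y}$ and $\log(1+1/y)$ respectively. Regularity of $N$---in particular monotonicity of $m_n^{1/n}$, giving $m_k\le m_n^{k/n}$ for $k\le n$---lets me take $y=m_n^{1/n}/\eta$ in one direction and $y=m_n^{1/n}$ in the other. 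As $y\to\infty$ both $1-e^{-1/y}\sim\log(1+1/y)\sim 1/y$, so the sums are comparable to $n!\,m_n = N_n$ up to a factor $\eta^{\mp n}$, absorbed by the slack between the parameters $1/\eta$, $1$, $\eta$ in the two inclusions.

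In the analytic case, write $\tau:=\tau(N)$, so $N_n \asymp n!\tau^n$ and $N^*_n = n!(\sin 1/\tau)^{-n}$. For $B_{1/\eta}\subseteq C_1$: the hypothesis $|g^{(n)}(\xi)|\le C n!(\tau/\eta)^n$ shows, by the standard Taylor-series argument, that $g$ extends analytically to the strip $|\mathrm{Im}\,\xi|<\eta/\tau$ about $\log J$; hence $f$ is analytic in the sector $|\arg x|<\eta/\tau$ about $J$, and a Cauchy estimate on the disk of radius $|x|\sin(\min(\eta/\tau,\pi/2))$ inscribed at real $x$ yields $|x|^n|f^{(n)}(x)|\le C n!(\sin 1/\tau)^{-n} = C N^*_n$, since $\min(\eta/\tau,\pi/2)\ge 1/\tau$ for $\eta>1$ and $\tau\ge 2/\pi$. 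For $C_1\subseteq B_\eta$: the hypothesis $|f^{(n)}(x)|\le C n!(x\sin 1/\tau)^{-n}$ implies that the Taylor series of $f$ at each real $x>0$ converges in the disk $D(x,x\sin 1/\tau)$; the union of these Taylor disks over all real $x>0$ is exactly the open sector $|\arg z|<1/\tau$ (a quadratic computation in $x$), so $f$ extends to an analytic function there, locally bounded on compact subsets. Pulling back, $g$ is analytic and bounded on $|\mathrm{Im}\,\xi|<1/\tau$, and the Cauchy estimate at real $\xi_0$ with radius $1/(\eta\tau)<1/\tau$ gives $|g^{(n)}(\xi_0)|\le n!(\eta\tau)^n\sup|g|\le C\eta^n N_n$.

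The main technical obstacle I expect lies in the non-analytic case: while the generating functions pin down the leading asymptotics $\sim N_n/\eta^n$ of the Stirling sums, extracting \emph{uniform} bounds---independent of $n$, valid in the small-$n$ regime as well, and with constants depending only on $C(f,J)$, $\eta$, and the regularity data of $N$---requires careful bookkeeping, in particular matching sub-leading behavior and verifying that the log-convexity hypotheses on $m_n$ suffice for the required dominations across the full range $k\le n$. The analytic case is conceptually cleaner, the only care needed being uniform control of $\sup|g|$ (or $\sup|f|$) on the compact subdomains that arise.
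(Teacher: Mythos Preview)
Your proposal is correct. For the analytic case it coincides with the paper's argument (analyticity of $g$ in a strip, hence of $f$ in a sector, followed by Cauchy estimates). The difference lies in the non-analytic case: both you and the paper start from the Stirling identities, but the paper bounds $\widetilde N_n := \sum_{j} S(n,j)\, N_j$ by a short induction on $n$ using the recursion $S(n+1,j) = jS(n,j) + S(n,j-1)$ together with log-convexity of $N$, obtaining $\widetilde N_{n+1} \le (n + N_{n+1}/N_n)\,\widetilde N_n$; then $\tau(N)=\infty$ gives $(n + N_{n+1}/N_n) \le (1+\delta)\,N_{n+1}/N_n$ for large $n$, and the induction closes. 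The identical recursion for $|s(n,j)|$ handles the reverse direction. Your generating-function route is a genuine alternative and also works. The concern you flag about ``sub-leading behavior'' is overstated: you do not need singularity analysis, only the crude Cauchy coefficient bound
\[
\sum_{k} S(n,k)\, k!\, y^k \;\le\; \frac{n!}{z^n}\cdot\frac{1}{1-y(e^z-1)}
\]
evaluated at $z = 1/(\eta y)$ with $y = m_n^{1/n}$, which already yields $C_\eta\,\eta^n N_n$ uniformly in $n$ (and analogously for the first-kind sum at $z=1/y$, $y=m_n^{1/n}/\eta$). The domination $m_k \le m_n^{k/n}$ you invoke is exactly regularity condition~(2), so no further bookkeeping is needed. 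The paper's induction is shorter and entirely elementary; your approach has the merit of making transparent why the loss is precisely a factor $\eta^n$ with $\eta>1$ arbitrary, and would generalize more readily to other weight structures.
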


\subsubsection{Carleman type classes of analytic functions: the class $D\left(M,\gamma;I\right)$. }

Given an admissible weight $\gamma$ and $\eta>0$, put

\[
\Omega_{\eta}\left(\gamma\right):=\left\{ z\neq0:\sup_{t>0}E(t\eta)\left|K\left(\frac{t}{z}\right)\right|<\infty\right\} 
\]
and 
\[
D\left(M,\gamma;\left[0,a\right)\right)=\bigcup_{\eta>a}C_{\infty}^{M}(\overline{\Omega}_{\eta}\mathbb{\left(\gamma\right)}).
\]
That is, the class $D\left(M,\gamma;I\right)$ consists of all
functions $f$ such that for any $\eta>a$, $f$ is holomorphic in $\Omega_{\eta}\left(\gamma\right)$
and there exists a constant $C=C_{\eta}>0$ such that
\[
\left|f^{\left(n\right)}\left(z\right)\right|\leq C^{n+1}M_{n}\gamma_{n+1},\quad\forall n\in\mathbb{Z}_{+},\,\forall z\in\overline{\Omega}_{\eta}\left(\gamma\right).
\]
Equivalently, instead of using iterated derivatives, we can define
the class $D\left(M,\gamma;I\right)$ through the remainder of
the asymptotic expansion at the origin: given $f\in\text{Hol\ensuremath{\left(\Omega_{\eta}\left(\gamma\right)\right)}\ensuremath{\ensuremath{\cap C^{\infty}\left(\left\{ 0\right\} \cup\Omega_{\eta}\left(\gamma\right)\right)}}}$,
we put
\[
R_{n}(z,f):=f(z)-\sum_{0\leq k<n}\frac{f^{(k)}(0)}{k!}z^{n}.
\]
The class $D\left(M,\gamma;I\right)$ consists of all functions $f$
such that for any $\eta>a$, $f\in\text{Hol}\left(\Omega_{\eta}\left(\gamma\right)\right)\cap C^{\infty}\left(\left\{ 0\right\} \cup\Omega_{\eta}\left(\gamma\right)\right)$
and there exists a constant $C=C_{\eta}>0$ such that
\[
\left|R_{n}(z,f)\right|\leq C^{n+1}\frac{M_{n}\gamma_{n+1}}{n!}|z|^{n},\quad\forall n\in\mathbb{Z}_{+},\,\forall z\in\Omega_{\eta}\left(\gamma\right).
\]

\begin{rem}
Recall that $L(s)=\gamma(s)^{1/s},\,\varepsilon(s)=s\frac{L^{\prime}(s)}{L(s)}$
and 
\[
\widehat{\gamma}_{n}=\sup_{\rho>0}\rho^{n}\left|\gamma\left(i\rho\right)\right|=\sup_{\rho>0}\rho^{n}\exp\left(-\left(1+o(1)\right)\frac{\pi}{2}\rho\varepsilon(\rho)\right).
\]
If the sequence $\widehat{\gamma}$ is analytic (i.e., if $\lim_{\rho\to\infty}\varepsilon(\rho)>0$),
then $\Omega_{\eta}\left(\gamma\right)$ is indeed a domain, while
if the sequence $\widehat{\gamma}$ is non-analytic (i.e., if $\lim_{\rho\to\infty}\varepsilon(\rho)=0$),
then $[0,\eta^{-1}]\subseteq\Omega_{\eta}\left(\gamma\right)\subseteq[0,\eta^{-1}).$
In the latter case, $D\left(M,\gamma;I\right)=C_{\infty}^{M\cdot\gamma}(I).$
Therefore, we will consider $D\left(M,\gamma;I\right)$ only in
the analytic case.
\end{rem}

\section{Non-homogeneous Carleman classes}

\subsection{Equivalence of Non-homogeneous Carleman classes and the classes $B(M,N;I)$}

Throughout this section, we fix a regular sequence $N$. Recall that
$\tau(N)=\lim_{n\to\infty}\left(\frac{N_{n}}{n!}\right)^{1/n}$ and
put $N^{*}=N$ in the non-analytic case (i.e., $\tau(N)=\infty)$ and
$N_{n}^{*}=n!\frac{1}{\sin^{n}\left(1/\tau(N)\right)}$.

Lemma \ref{lem:non-homg and B} follows from the next lemma.
\begin{lem}
\label{lem:exp change of var in Carleman class} Let $f$ and $g$
be two $C^{\infty}$ functions related by $g(x)=f(e^{x})$ for $x\in\left[-\infty,0\right]$.
The function $f$ satisfies 
\[
\left|t^{n}f^{(n)}(t)\right|\lesssim_{\eta}\eta^{n}N_{n}^{*},\quad\forall t\in[0,1],\,n\in\mathbb{N},\,\eta>1,
\]

if and only if, $g\in C_{\eta}^{N}\left[-\infty,0\right]$ for all
$\eta>1.$
\end{lem}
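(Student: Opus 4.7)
The plan is to exploit the operator identity $\dfrac{d}{dx}=\theta:=t\,\dfrac{d}{dt}$, arising under $t=e^x$, together with the two well-known expansions
\[
\theta^n=\sum_{k=1}^n S(n,k)\,t^k D^k,\qquad t^nD^n=\theta^{\underline n}=\sum_{k=1}^n s(n,k)\,\theta^k,
\]
where $D=d/dt$, $\theta^{\underline n}=\theta(\theta-1)\cdots(\theta-n+1)$, and $S(n,k)$, $s(n,k)$ denote the Stirling numbers of the second and first kind, respectively. Applied to $f$, these give the two pointwise identities
\[
g^{(n)}(x)=\sum_{k=1}^n S(n,k)\,t^k f^{(k)}(t),\qquad
t^n f^{(n)}(t)=\sum_{k=1}^n s(n,k)\,g^{(k)}(x),
\]
valid for $t=e^x\in(0,1]$. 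Each direction of the lemma is then obtained by plugging in the hypothesis and estimating the Stirling sum.

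For the forward implication, assume $|t^k f^{(k)}(t)|\le C_{\eta_0}\eta_0^k N_k^{*}$ for every $\eta_0>1$. Using the elementary bound $S(n,k)\le\binom{n}{k}k^{n-k}$, writing $N_k=k!m_k$, and invoking the regularity of $N$ through the inequality $m_k\le m_n^{k/n}$ (a direct consequence of $m_n^{1/n}$ being eventually non-decreasing), one computes, after the substitution $j=n-k$,
\[
\sum_{k=1}^n S(n,k)\,\eta_0^k N_k\;\le\; N_n\,\eta_0^n\exp\!\Bigl(\tfrac{n}{\eta_0\,m_n^{1/n}}\Bigr).
\]
In the non-analytic case $N^{*}=N$ and $m_n^{1/n}\to\infty$, so the exponential factor is $1+o(1)$; given $\eta>1$, choosing $\eta_0\in(1,\eta)$ slightly below $\eta$ then yields $|g^{(n)}(x)|\le C'_\eta\eta^n N_n$. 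In the analytic case $m_n^{1/n}\to\tau(N)<\infty$, and the residual factor $e^{n/(\eta_0\tau(N))}$ is precisely what is absorbed by the replacement of $N_k$ on the hypothesis side by $N_k^{*}=k!/\sin^k(1/\tau(N))$. The sine factor has a clean geometric origin: the hypothesis on $f$ reflects that $f$ extends analytically to the sector $\{|\arg t|<1/\tau(N)\}$, and a point $t>0$ in this sector lies at distance $t\sin(1/\tau(N))$ from the boundary; a Cauchy estimate then gives $|t^n f^{(n)}(t)|\lesssim n!/\sin^n(1/\tau(N))$ from a uniform bound on $f$. The restriction $\tau(N)\ge 2/\pi$ is exactly what keeps $\sin(1/\tau(N))>0$.

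For the reverse implication, insert the hypothesis $|g^{(k)}(x)|\le C'_{\eta_0}\eta_0^k N_k$ into the second identity and estimate by
\[
\sum_{k=1}^n |s(n,k)|\,x^k=x(x+1)\cdots(x+n-1)=\frac{\Gamma(x+n)}{\Gamma(x)}.
\]
Combining this with the same regularity inequality $m_k\le m_n^{k/n}$ shows that the sum is concentrated at $k=n$ up to a subexponential factor, giving $|t^n f^{(n)}(t)|\le C\eta^n N_n^{*}$ for every $\eta>1$. In the analytic case, the passage from a Carleman-type bound for $g$ on $(-\infty,0]$ to bounds for $f$ on $(0,1]$ again uses the exponential map, now viewed as sending a horizontal strip $\{|\mathrm{Im}\,x|<1/\tau(N)\}$ onto the sector $\{|\arg t|<1/\tau(N)\}$, and the same geometric sine appears.

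The main obstacle is not the operator identities themselves but the careful bookkeeping needed to make the Stirling-number sums collapse cleanly for \emph{every} $\eta>1$; a naive use of $k!S(n,k)\le k^n$ only yields the bound for $\eta>e$. The sharper estimate requires exploiting the regularity of $N$ through $m_k\le m_n^{k/n}$, and in the analytic case the explicit factor $\sin(1/\tau(N))$ in $N^{*}$ is precisely what allows the equivalence to hold down to $\eta\to 1^+$.
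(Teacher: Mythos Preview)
Your framework is the same as the paper's: both proofs rest on the Stirling-number identities
\[
g^{(n)}(x)=\sum_{k} S(n,k)\,t^k f^{(k)}(t),\qquad t^n f^{(n)}(t)=\sum_{k} s(n,k)\,g^{(k)}(x),
\]
and both dispose of the analytic case $\tau(N)<\infty$ geometrically, via the exponential map between the strip of width $1/\tau(N)$ and the sector of half-angle $1/\tau(N)$ (which is exactly where the $\sin(1/\tau(N))$ in $N^*$ comes from).

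The difference is in how the Stirling sums are controlled in the non-analytic case. You use the pointwise bound $S(n,k)\le\binom{n}{k}k^{n-k}$ together with the regularity inequality $m_k\le m_n^{k/n}$; the paper instead runs an induction directly on the recursion $S(n+1,j)=jS(n,j)+S(n,j-1)$, obtaining $\widetilde N_{n+1}\le (n+N_{n+1}/N_n)\widetilde N_n$ for $\widetilde N_n=\sum_j S(n,j)N_j$, and then uses $N_{n+1}/N_n\gg n$ (a consequence of $\tau(N)=\infty$) to conclude $\widetilde N_n\le C_\delta(1+\delta)^n N_n$. The inductive route has the advantage that \emph{exactly the same argument} works for the unsigned first-kind numbers, since their recursion $|s(n+1,j)|=n|s(n,j)|+|s(n,j-1)|$ yields the identical inequality; this is why the paper can say ``the same induction argument'' for the reverse direction.

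Two remarks on your write-up. First, your claim that ``the exponential factor is $1+o(1)$'' is not correct as stated: for the most relevant examples (e.g.\ $N_n=n!\log^n n$) one has $m_n^{1/n}=\log n$ and $\exp(n/(\eta_0 m_n^{1/n}))\to\infty$. What is true, and what you actually need, is that this factor is $e^{o(n)}$ (since $m_n^{1/n}\to\infty$), hence dominated by $(\eta/\eta_0)^n$ for large $n$; the conclusion survives, but the sentence should be corrected. Second, your reverse-direction sketch (``the sum is concentrated at $k=n$ up to a subexponential factor'') is not backed by a concrete estimate: the generating-function identity $\sum_k|s(n,k)|x^k=x(x+1)\cdots(x+n-1)$ does not directly handle $\sum_k|s(n,k)|\,k!\,m_k\,\eta_0^k$, and the obvious attempts (e.g.\ $|s(n,k)|k!\le n!$) fail. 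This step needs either a genuine estimate or, more simply, the paper's inductive argument, which closes both directions at once.
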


\noindent To prove the Lemma, we will need the following combinatorial
claim.
\begin{claim}
\label{clm: striling number identiniy}Consider the Stirling numbers
of the first and second kind defined recursively by
\[
\begin{bmatrix}n+1\\
j
\end{bmatrix}=n\begin{bmatrix}n\\
j
\end{bmatrix}+\begin{bmatrix}n\\
j-1
\end{bmatrix},\quad\begin{bmatrix}n\\
0
\end{bmatrix}=0,\,\begin{bmatrix}n\\
n
\end{bmatrix}=1,
\]
and
\[
\begin{Bmatrix}n+1\\
j
\end{Bmatrix}=j\begin{Bmatrix}n\\
j
\end{Bmatrix}+\begin{Bmatrix}n\\
j-1
\end{Bmatrix},\quad\begin{Bmatrix}n\\
0
\end{Bmatrix}=0,\begin{Bmatrix}n\\
1
\end{Bmatrix}=\begin{Bmatrix}n\\
n
\end{Bmatrix}=1
\]
respectively. If $g(x)=f(e^{x})$ and $g\in C^{\infty},$ then
\[
g^{(n)}(x)=\sum_{j=1}^{n}\begin{Bmatrix}n\\
j
\end{Bmatrix}f^{(j)}\left(e^{x}\right)e^{jx},
\]
and
\[
f^{(n)}(t)=\frac{1}{t^{n}}\sum_{j=1}^{n}\left(-1\right)^{n+j}\begin{bmatrix}n\\
j
\end{bmatrix}g^{(j)}\left(\log t\right).
\]
\end{claim}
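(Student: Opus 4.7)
The plan is to establish both identities by induction on $n$, using the defining recurrences of the Stirling numbers of the second and first kind respectively. The chain rule $g'(x) = f'(e^x)\, e^x$ handles the base case $n=1$ for both, since $\begin{Bmatrix} 1 \\ 1 \end{Bmatrix} = \begin{bmatrix} 1 \\ 1 \end{bmatrix} = 1$.

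For the first identity, I would assume $g^{(n)}(x) = \sum_{j=1}^{n} \begin{Bmatrix} n \\ j \end{Bmatrix} f^{(j)}(e^x)\, e^{jx}$ and differentiate term by term. Each summand produces, via the product rule, a contribution $\begin{Bmatrix} n \\ j \end{Bmatrix} f^{(j+1)}(e^x)\, e^{(j+1)x}$ and $j\begin{Bmatrix} n \\ j \end{Bmatrix} f^{(j)}(e^x)\, e^{jx}$. Shifting the index in the first sum to $k=j+1$ and collecting the coefficient of $f^{(k)}(e^x)\, e^{kx}$ gives exactly $\begin{Bmatrix} n \\ k-1 \end{Bmatrix} + k\begin{Bmatrix} n \\ k \end{Bmatrix}$, which by the recurrence equals $\begin{Bmatrix} n+1 \\ k \end{Bmatrix}$, closing the induction.

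For the second identity I would argue similarly. Writing $h(t) := g(\log t) = f(t)$ and assuming $f^{(n)}(t) = t^{-n}\sum_{j=1}^{n}(-1)^{n+j}\begin{bmatrix} n \\ j \end{bmatrix}\, g^{(j)}(\log t)$, differentiate in $t$. The derivative of $t^{-n} g^{(j)}(\log t)$ equals $t^{-n-1}\bigl(g^{(j+1)}(\log t) - n\, g^{(j)}(\log t)\bigr)$, so grouping by the order of the derivative of $g$ that appears, the coefficient of $g^{(k)}(\log t)$ in $t^{n+1}f^{(n+1)}(t)$ becomes $(-1)^{n+k-1}\begin{bmatrix} n \\ k-1 \end{bmatrix} - n(-1)^{n+k}\begin{bmatrix} n \\ k \end{bmatrix} = (-1)^{n+1+k}\bigl(\begin{bmatrix} n \\ k-1 \end{bmatrix} + n\begin{bmatrix} n \\ k \end{bmatrix}\bigr) = (-1)^{n+1+k}\begin{bmatrix} n+1 \\ k \end{bmatrix}$ by the first-kind recurrence. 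This completes the induction.

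The argument is essentially bookkeeping, so the only substantive step is recognizing that the sign and coefficient patterns produced by differentiation match the Stirling recurrences on the nose; once the base case and the index shift are set up correctly, the remainder is mechanical. As an alternative, one could deduce the second identity from the first via the classical inverse relation $\sum_{j}(-1)^{n-j}\begin{bmatrix} n \\ j \end{bmatrix}\begin{Bmatrix} j \\ k \end{Bmatrix} = \delta_{n,k}$, but the direct induction is shorter and keeps the presentation self-contained.
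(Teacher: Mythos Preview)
Your proposal is correct and follows essentially the same approach as the paper: both identities are proved by induction on $n$, differentiating the inductive hypothesis, shifting the summation index, and invoking the defining recurrences for the Stirling numbers of the second and first kind, respectively. The only cosmetic difference is that the paper steps from $n-1$ to $n$ while you step from $n$ to $n+1$.
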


\begin{proof}
We will prove both of the identities by induction on $n$. Clearly,
the claim holds for $n=0$ and $n=1$. Assume that it is true for $j=n-1,$
i.e.,
\[
g^{(n-1)}(x)=\sum_{j=1}^{n-1}\begin{Bmatrix}n-1\\
j
\end{Bmatrix}f^{(j)}\left(e^{x}\right)e^{jx}.
\]
Differentiating this identity yields
\begin{align*}
g^{(n)}(x) & =\sum_{j=1}^{n-1}\begin{Bmatrix}n-1\\
j
\end{Bmatrix}\left(f^{(j+1)}\left(e^{x}\right)e^{\left(j+1\right)x}+jf^{(j)}\left(e^{x}\right)e^{jx}\right)\\
& =\sum_{j=2}^{n}\begin{Bmatrix}n-1\\
j-1
\end{Bmatrix}f^{(j)}\left(e^{x}\right)e^{jx}+\sum_{j=1}^{n-1}j\begin{Bmatrix}n-1\\
j
\end{Bmatrix}f^{(j)}\left(e^{x}\right)e^{jx}\\
& =\sum_{j=1}^{n}\left(\begin{Bmatrix}n-1\\
j-1
\end{Bmatrix}+j\begin{Bmatrix}n-1\\
j
\end{Bmatrix}\right)f^{(j)}\left(e^{x}\right)e^{jx}=\sum_{j=1}^{n}\begin{Bmatrix}n\\
j
\end{Bmatrix}f^{(j)}\left(e^{x}\right)e^{jx},
\end{align*}
proving the first identity. As for the second one, assume that
\[
f^{(n-1)}(t)=\frac{1}{t^{n-1}}\sum_{j=1}^{n-1}\left(-1\right)^{n-1+j}\begin{bmatrix}n-1\\
j
\end{bmatrix}g^{(j)}\left(\log t\right).
\]
Then
\begin{align*}
f^{(n)}(t) & =t^{-n}\sum_{j=1}^{n-1}\left(-1\right)^{n-1+j}\begin{bmatrix}n-1\\
j
\end{bmatrix}\left(g^{(j+1)}\left(\log t\right)+\left(1-n\right)g^{(j)}\left(\log t\right)\right)\\
& =t^{-n}\sum_{j=2}^{n}\left(-1\right)^{n+j}\begin{bmatrix}n-1\\
j-1
\end{bmatrix}g^{(j)}\left(\log t\right)+t^{-n}\sum_{j=1}^{n-1}\left(-1\right)^{n+j}\begin{bmatrix}n-1\\
j
\end{bmatrix}\left(n-1\right)g^{(j)}\left(\log t\right)\\
& =t^{-n}\sum_{j=1}^{n}\left(-1\right)^{n+j}\left(\begin{bmatrix}n-1\\
j-1
\end{bmatrix}+\begin{bmatrix}n-1\\
j
\end{bmatrix}\left(n-1\right)\right)g^{(j)}\left(\log t\right)\\
& = \frac{1}{t^{n}}\sum_{j=1}^{n}\left(-1\right)^{n+j}\begin{bmatrix}n\\
j
\end{bmatrix}g^{(j)}\left(\log t\right),
\end{align*}
proving the second identity.
\end{proof}
\begin{proof}[Proof of Lemma \ref{lem:exp change of var in Carleman class}]
We begin with the case $\tau(N)=+\infty$, so that $N^{*}=N.$ Assume
that $\left|t^{n}f^{(n)}(t)\right|\lesssim_{\eta}\eta^{n}N_{n}$ for
all $t\in[0,1],\,n\in\mathbb{N}$ and $\eta>1$. Then, $g\in C^{\infty}\left[-\infty,0\right],$
and satisfies the equality 
\[
g^{(n)}(x)=\sum_{j=1}^{n}\begin{Bmatrix}n\\
j
\end{Bmatrix}f^{(j)}\left(e^{x}\right)e^{jx},
\]
In particular, for any $\eta>1$
\[
\left|g^{(n)}(x)\right|\lesssim_{\eta}\eta^{n}\sum_{j=1}^{n}\begin{Bmatrix}n\\
j
\end{Bmatrix}N_{j}:=C\eta^{n}\widetilde{N}_{n}.
\]
We are going to prove by induction that $\widetilde{N}_{n}\leq C_{\delta}\left(1+\delta\right)^{n}N_{n}$,
for any $\delta>0.$ Indeed, assuming that the statement is valid
for $n$ and making use of the recursion formula, we get
\begin{align*}
\widetilde{N}_{n+1} & =\sum_{j=1}^{n+1}\begin{Bmatrix}n+1\\
j
\end{Bmatrix}N_{j}=\sum_{j=1}^{n+1}\left(j\begin{Bmatrix}n\\
j
\end{Bmatrix}+\begin{Bmatrix}n\\
j-1
\end{Bmatrix}\right)N_{j}\\
& =\sum_{j=1}^{n}j\begin{Bmatrix}n\\
j
\end{Bmatrix}N_{j}+\sum_{j=1}^{n}\begin{Bmatrix}n\\
j
\end{Bmatrix}N_{j+1}\leq\left(n+\frac{N_{n+1}}{N_{n}}\right)\widetilde{N}_{n}\\
& \leq C_{\delta}\left(1+\delta\right)^{n}\left(n+\frac{N_{n+1}}{N_{n}}\right)N_{n}.
\end{align*}
Since $\tau(N)=+\infty$, for sufficiently large $n$ we have $\left(n+\frac{N_{n+1}}{N_{n}}\right)\leq\left(1+\delta\right)\frac{N_{n+1}}{N_{n}}$, and
the statement is proved. We conclude that $g\in C_{\eta}^{N}\left[-\infty,0\right]$
for all $\eta>1.$

Next, we assume that \textbf{$g\in C_{\eta}^{N}\left[-\infty,0\right]$}.
Since, $f(t)=g\left(\log t\right)$, we obtain
\[
f^{(n)}(t)=\frac{1}{t^{n}}\sum_{j=1}^{n}\left(-1\right)^{n+j}\begin{bmatrix}n\\
j
\end{bmatrix}g^{(j)}\left(\log t\right),
\]
The same induction argument as before yields that $\left|t^{n}f^{(n)}(t)\right|\lesssim_{\eta}\eta^{n}N_{n}.$

Finally, we treat the case $\tau(N)<+\infty.$ In this case, the function
$g$ is analytic in the half-strip
\[
\left\{ z:\text{dist\ensuremath{\left(z,\mathbb{R}_{-}\right)}<\ensuremath{\frac{1}{\tau(N)}}}\right\} ,
\]
while $f$ is analytic in the sector 
\[
\left\{ s:\left|\arg s\right|<\arcsin\frac{1}{\tau(N^{*})}\right\} .
\]
Since the map $s=e^{z}$ maps the above half-strip into the sector in question, the lemma is proved.
\end{proof}

\subsection{Quasianalyticity~of non--homogeneous Carleman classes\label{subsec:Quasianalyticity}}

Here we discuss quasianalyticity in the non--homogeneous Carleman
classes $B(M,N;I).$ We begin our discussion by recalling some of
the classical results about quasianalyticity in $C_{\infty}^{M}\left(\Omega\right)$
where $\Omega$ is an interval that contains the origin, $I,$ or
a sector of opening $\pi\alpha,$ $\Omega_{\alpha}=\left\{ z:\left|\arg z\right|\leq\alpha\frac{\pi}{2},0\leq|z|\leq\rho_{0}\right\} .$

As we already mentioned, Denjoy and Carleman \cite{carleman1926fonctions}
found a necessary and sufficient condition for the Carleman class
$C^{M}(I)$ to be quasianalytic. For a regular sequence $M,$ their
condition is

\[
\sum_{n\geq0}\frac{M_{n}}{M_{n+1}}=\infty.
\]

A result proved by Carleson \cite{carleson1952sets}, Salinas \cite{salinas1955funciones}
and Korenblum \cite{korenblum1965quasianalytic,korenblum1966non}
implies that the condition 
\[
\sum_{n\geq0}\left(\frac{M_{n}}{M_{n+1}}\right)^{1+\frac{1}{\alpha}}=\infty
\]
is necessary and sufficient for quasianalyticity of the class $C_{\infty}^{M}\left(\Omega_{\alpha}\right)$
provided that $\alpha<2$. These classical results provide us with
quasianalyticity criterion for non--homogeneous Carleman classes,
$B(M,N;I)$, in the case that $B(M,N;I)$ is reduced to a Carleman
class (i.e. when $N_{n}^{1/n}\lesssim M_{n}^{1/n})$ or in the case
that $N$ is analytic (i.e. when $\left(N_{n}/n!\right)^{1/n}\lesssim1),$
but, in general, quasianalyticity of non--homogeneous Carleman classes
is not covered by the classical theory of quasianalyticity. Here,
we are presenting a criterion for quasianalyticity which follows from
the considerations of the image of $\mathcal{B}_{\gamma}$ under different
classes.
\begin{thm}
\label{thm: quasianalytic}Suppose that $\gamma$ is an admissible
weight with non-analytic $\widehat{\gamma}$, $M$ is a regular
sequence, $\eta>1$ and that $I$ is an interval containing the origin.
Then, the following hold:
\begin{enumerate}
\item If $\sum_{n\geq1}\frac{M_{n}}{M_{n+1}}=\infty$, then the classes
$B_{1/\eta}(M\gamma,\widehat{\gamma};I)$ and $C_{1/\eta}(M\gamma,\widehat{\gamma};I)$
are quasianalytic.
\item If $\sum_{n\geq1}\frac{M_{n}}{M_{n+1}}<\infty$, then the classes
$B_{\eta}(M\gamma,\widehat{\gamma};I)$ and $C_{\eta}(M\gamma,\widehat{\gamma};I)$
are not quasianalytic.
\end{enumerate}
\end{thm}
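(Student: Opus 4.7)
The approach I would take is to reduce each assertion to the corresponding quasianalyticity property of the Carleman class $A(M,\gamma;I)$, exploiting the Borel--Laplace correspondence of Theorem \ref{thm:Main thm non analytic}; points away from the origin are handled by the classical Denjoy--Carleman theorem applied to the Carleman class $C^{\widehat{\gamma}}$.

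\textbf{Part (1).} Assume $\sum_n M_{n}/M_{n+1}=\infty$, so that $A(M,\gamma;I)$ is quasianalytic by Denjoy--Carleman. For $f\in B_{1/\eta}(M\gamma,\widehat{\gamma};I)$ with $f^{(n)}(c)=0$ for all $n$, I would consider $S:=\{x\in I:f^{(n)}(x)=0\;\forall n\}$, which is closed and nonempty, and prove that $S$ is also open; connectedness of $I$ then forces $S=I$ and $f\equiv 0$. To verify openness at a point $x_{0}\in S$ with $x_{0}\ne 0$, I would pick a compact neighborhood $J\subset I$ avoiding the origin; Lemma \ref{lem:exp change of var in Carleman class} converts the $\widehat{\gamma}$--condition of $B_{1/\eta}$ into a standard Carleman estimate $|f^{(n)}(x)|\le C(\inf_{J}|x|\cdot\eta)^{-n}\widehat{\gamma}_{n}$ on $J$, placing $f$ in $C^{\widehat{\gamma}}(J)$. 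To apply Denjoy--Carleman here I need $\sum_n \widehat{\gamma}_{n}/\widehat{\gamma}_{n+1}=\infty$, which I would derive from admissibility condition (A) by saddle--point analysis of
\[
\widehat{\gamma}_{n}=\sup_{\rho>0}\rho^{n}\exp\bigl(-(1+o(1))\tfrac{\pi}{2}\rho\varepsilon(\rho)\bigr):
\]
the saddle $\rho_{n}$ satisfies $n\asymp\rho_{n}\varepsilon(\rho_{n})$, yielding $\widehat{\gamma}_{n}/\widehat{\gamma}_{n+1}\asymp 1/\rho_{n}$, and the integral test converts $\sum_{n}1/\rho_{n}$ into the divergent $\int^{\infty}\varepsilon(\rho)/\rho\,d\rho$ provided by (A). For $x_{0}=0$, the quasianalytic case of Theorem \ref{thm:Main thm non analytic} supplies $F=\mathcal{B}_{\gamma}f\in A(M,\gamma;0_{+})$ with $f=\mathcal{L}_{\gamma}F$; since $F^{(n)}(0)=f^{(n)}(0)/\gamma_{n+1}=0$ and $A(M,\gamma;0_{+})$ is quasianalytic, $F\equiv 0$, whence $f\equiv 0$ near the origin. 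The same scheme handles $C_{1/\eta}(M\gamma,\widehat{\gamma};I)$, after observing that the Stirling--number induction underlying Lemma \ref{lem:exp change of var in Carleman class} yields an embedding $C_{1/\eta}\subseteq B_{1/\eta'}$ for some $\eta'>1$ in the non-analytic regime.

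\textbf{Part (2).} Assume $\sum_n M_{n}/M_{n+1}<\infty$. Then $A(M,\gamma;I)$ is non-quasianalytic by Denjoy--Carleman, so I would pick a non-trivial $F\in A(M,\gamma;I)$ with $F^{(n)}(0)=0$ for all $n$ and set $f:=\mathcal{L}_{\gamma}F$. By Theorem \ref{thm:Main thm non analytic}, $f\in B_{\eta}(M\gamma,\widehat{\gamma};I)$, and the reverse Stirling--number computation also places $f\in C_{\eta}(M\gamma,\widehat{\gamma};I)$ (with possibly enlarged constants). From $f^{(n)}(0)=\gamma_{n+1}F^{(n)}(0)=0$ we see that $f$ has vanishing Taylor series at the origin, and $f\not\equiv 0$ by the injectivity of $\mathcal{L}_{\gamma}$ on $A$, a property of the kernel $K_{1}$ that is established separately in the paper. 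Thus $f$ witnesses the failure of quasianalyticity of both $B_{\eta}$ and $C_{\eta}$ at the origin.

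\textbf{Main obstacle.} The delicate step will be the verification $\sum_n \widehat{\gamma}_{n}/\widehat{\gamma}_{n+1}=\infty$ of Part (1), which demands careful saddle--point asymptotics of $\widehat{\gamma}_{n}$ in the slow--decay regime $\varepsilon(\rho)\to 0$ and a precise translation of admissibility condition (A) from the integral into the corresponding sum. A secondary concern is the injectivity of $\mathcal{L}_{\gamma}$ needed in Part (2), which is a property of the integral kernel rather than of the Carleman bounds and so must be quoted from elsewhere in the paper.
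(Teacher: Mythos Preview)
Your treatment of Part~(2) and of the ``away from the origin'' part of Part~(1) is essentially what the paper does (the paper records both as remarks), so those pieces are fine.

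The argument for Part~(1) \emph{at the origin}, however, is circular. You invoke the ``moreover'' clause of Theorem~\ref{thm:Main thm non analytic} to write $f=\mathcal{L}_{\gamma}F$ with $F=\mathcal{B}_{\gamma}f$; but that clause---the applicability of $(\gamma)$--summation in $B_{1/\eta}$, i.e.\ $\mathcal{L}_{\gamma}\mathcal{B}_{\gamma}f=f$---is precisely what is deduced \emph{from} the quasianalyticity of the $B$--classes, not the other way around. The only inclusion that is proved independently is $\mathcal{B}_{\gamma}B_{1/\eta}(M\gamma,\widehat{\gamma};I)\subseteq A(M,\gamma;I)$, and together with quasianalyticity of $A$ this gives only $\mathcal{B}_{\gamma}f\equiv 0$. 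Since $\mathcal{B}_{\gamma}$ depends solely on the Taylor jet of $f$ at $0$, the conclusion $\mathcal{B}_{\gamma}f\equiv 0$ is exactly the hypothesis $f^{(n)}(0)=0$ restated; you have not advanced.

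The paper closes this gap by an additional decomposition step that your plan omits. One first factorizes $f=f_{+}+f_{-}$ with $f_{\pm}\in B_{\eta_{1}}^{\pm}(M\gamma,\widehat{\gamma};0)$, the pieces being holomorphic in an upper/lower half-disk (this factorization is produced via the almost-holomorphic extension and the Cauchy--Pompeiu formula). From $\mathcal{B}_{\gamma}f\equiv 0$ one gets $\mathcal{B}_{\gamma}f_{+}=-\mathcal{B}_{\gamma}f_{-}$, and a separate lemma (Lemma~\ref{lem:intersection pm}) shows that $\mathcal{B}_{\gamma}B^{+}\cap\mathcal{B}_{\gamma}B^{-}=\mathcal{B}_{\gamma}C^{\omega}(0)$, because $\mathcal{B}_{\gamma}f_{\pm}$ extend holomorphically to opposite half-planes. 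The classes $B_{\eta}^{\pm}$ are quasianalytic by the Carleson--Salinas--Korenblum theorem (a half-plane Denjoy--Carleman), so $\mathcal{B}_{\gamma}$ is injective there, forcing $f_{\pm}\in C^{\omega}(0)$ and hence $f\equiv 0$. This extra machinery---the $\pm$ factorization and the intersection lemma---is the genuine content of Part~(1), and it cannot be replaced by a direct appeal to Theorem~\ref{thm:Main thm non analytic}.
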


\begin{rem}
Note that $B_{\eta}(M\gamma,\widehat{\gamma};I)$ is always quasianalytic
away from zero (i.e., the map $f\mapsto\left(f^{(n)}(c)\right)_{n\geq0}$
is injective for any $c\in J$, $c\neq0$). Indeed, if $f\in B_{\eta}(M\gamma,\widehat{\gamma};I)$,
then $f\vert_{I\setminus\left\{ 0\right\} }\in C_{\infty}^{\widehat{\gamma}}\left(I\setminus\left\{ 0\right\} \right)$.
The first assumption of admissible functions implies that $\sum\widehat{\gamma}_{n}^{-1/n}=\infty$
and thus, by the Denjoy--Carleman Theorem, the class $C^{\widehat{\gamma}}\left(J\setminus\left\{ 0\right\} \right)$
is quasianalytic.
\end{rem}

\begin{rem}
The second assertion of the above Theorem follows immediately from
Theorem \ref{thm:Main thm}.
\end{rem}

\begin{rem}
It is sufficient to prove the first assertion for germs. Indeed, by
reflecting $I$ if necessary, there is no loss of generality with
assuming $[0,\delta]\subset I$ for sufficiently small $\delta$.
Thus, if $f\in B_{1/\eta}(M\gamma,\widehat{\gamma};I)$, with $f^{(n)}(0)\equiv0$
for all $n\geq0,$ $f\vert_{[0,\delta]}$ can be extended to an element
of $B_{1/\eta}(M\gamma,\widehat{\gamma};[-\delta,\delta])$.
\end{rem}

\begin{example}
The class $B_{1}(n!\log^{n\alpha}\left(n+e\right),n!\log^{n}\left(n+e\right);I)$
is \emph{quasianalytic} if $\alpha<\frac{\pi}{2}+1$ and is \emph{not
quasianalytic} if $\alpha>\frac{\pi}{2}+1$. The theorem doesn't treat
the case $\alpha=\frac{\pi}{2}+1$, but we suspect that in this case
the class is also quasianalytic.
\end{example}

\begin{example}
The class $B_{1}\left(n!\exp\left(\alpha n\sqrt{\log(n+1)}\right),n!\log^{n/2}\left(n+e\right);I\right)$
is \emph{quasianalytic} if $\alpha\leq\pi$ and is \emph{not quasianalytic}
if $\alpha>\pi$. 
\end{example}

\begin{example}
The class $B_{1}(n!\log^{n}\left(n+e\right)\log^{\beta n}\log(n+e^{e}),n!\log^{n}\left(n+e\right)\log^{n}\log(n+e^{e});I)$
is \emph{quasianalytic} if $\beta<\frac{\pi}{2}+1$ and is \emph{not
quasianalytic} if $\beta>\frac{\pi}{2}+1$.
\end{example}

\begin{rem}
A stronger quasianalyticity result with an elementary proof can be
obtained by Hirschman's version of the uncertainty principle \cite{hirschman1950behaviour}.
The statement and its proof will be presented elsewhere.
\end{rem}

\begin{rem}
The result of Carleson, Salinas and Korenblum remains true for $\alpha\geq2$,
provided that we consider $\Omega_{\alpha}$ as a sector in the Riemann
surface of the logarithm, i.e., if $C_{\infty}^{M}\left(\Omega_{\alpha}\right)$
consists of functions $f$ so that $f(e^{z})$ is analytic in the
half strip
\[
\left\{ w:\text{Re}\left(w\right)<\log\rho_{0},\,\left|\text{Im}\left(w\right)\right|<\alpha\cdot\frac{\pi}{2}\right\} .
\]
\end{rem}

\subsubsection{Factorization of functions in  $B_{\eta}(M,N;0)$}

The proof of Theorem \ref{thm: quasianalytic} requires a factorization
of functions in $B_{\eta}(M,N;0)$ to a sum of two functions which
are analytic in the lower and upper half-plane respectively. Put $S_{\mu}^{\pm}=\left\{ z:\text{\ensuremath{\pm}Im}z>0,\,|z|<\mu\right\} .$
The \emph{class} $B_{\eta}^{\pm}(M,N;0)$ is the class of all functions
$f\in B_{\eta}(M,N;0)$ for which there exists $\mu=\mu_{f}>0$ such
that $f\in C^{\infty}\left(\overline{S_{\mu}^{\pm}}\right)\cap\text{Hol}\left(S_{\mu}^{\pm}\right).$

Let $\gamma$ be an admissible weight with non-analytic $\widehat{\gamma}$
and let $M$ be a quasianalytic regular sequence (i.e., $\sum_{n\geq1}\frac{M_{n}}{M_{n+1}}=\infty$).
In Section \ref{subsec:The-classes plus minus} we will prove the
factorization:
\begin{equation}
B_{\eta}(M\gamma,\widehat{\gamma};0)\subseteq B_{\eta_{1}}^{+}(M\gamma,\widehat{\gamma};0)+B_{\eta_{1}}^{-}(M\gamma,\widehat{\gamma};0),\quad\forall\eta_{1}>\eta>0.\label{eq:Factorization =00005Cpm}
\end{equation}
Note that the above factorization is not unique, but $B_{\eta}^{+}(M\gamma,\widehat{\gamma};0)\cap B_{\eta}^{-}(M\gamma,\widehat{\gamma};0)=C^{\omega}\left(0\right)$, which
is the class of all real analytic germs at the origin

Also observe that 
\[
\sum_{n\geq1}\frac{M_{n}}{M_{n+1}}=\infty\quad\Rightarrow\quad\sum_{n\geq1}\sqrt{\frac{M_{n}\gamma_{n}}{M_{n+1}\gamma_{n+1}}}=\infty,
\]
and therefore by the result of Carleson, Salinas and Korenblum,
the classes $B_{\eta}^{\pm}(M\gamma,\widehat{\gamma};J)$ are quasianalytic.
The next lemma, which will be proven in Section \ref{subsec:The-classes plus minus},
is the key ingredient in the proof of Theorem \ref{thm: quasianalytic}.
\begin{lem}
\label{lem:intersection pm}For any $\eta>0$, 
\[
\mathcal{B}_{\gamma}B_{\eta}^{+}(M\gamma,\widehat{\gamma};0)\cap\mathcal{B}_{\gamma}B_{\eta}^{-}(M\gamma,\widehat{\gamma};0)=\mathcal{B}_{\gamma}C^{\omega}\left(0\right),
\]

where $C^{\omega}\left(0\right)$ is the class of real analytic germs
at the origin. 
\end{lem}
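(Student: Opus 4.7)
The inclusion $\mathcal{B}_\gamma C^\omega(0) \subseteq \mathcal{B}_\gamma B_\eta^+(M\gamma,\widehat{\gamma};0) \cap \mathcal{B}_\gamma B_\eta^-(M\gamma,\widehat{\gamma};0)$ is immediate, since any real-analytic germ at the origin belongs to both sectoral classes. For the reverse inclusion, suppose $F = \mathcal{B}_\gamma f_+ = \mathcal{B}_\gamma f_-$ with $f_\pm \in B_\eta^\pm(M\gamma,\widehat{\gamma};0)$. Because $\mathcal{B}_\gamma$ sees only the Taylor data at the origin, we have $c_n := f_+^{(n)}(0) = f_-^{(n)}(0)$ for every $n$, and the task reduces to proving that the common formal series $\sum_{n\geq 0} c_n z^n/n!$ converges in some neighborhood of $0$; its sum $g_0$ will then be an element of $C^\omega(0)$ with $\mathcal{B}_\gamma g_0 = F$.

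The plan is to glue $f_+$ on $S_\mu^+$ with $f_-$ on $S_\mu^-$ across the real axis by subtracting a Cauchy integral of their jump. Set $h := (f_+ - f_-)|_{(-\mu,\mu)}$; then $h$ is $C^\infty$-flat at $0$ and, by Lemma \ref{lem:non-homg and B}, satisfies the two-sided estimate
\[
|h^{(n)}(x)| \lesssim \min\bigl\{C^n M_n \gamma_n,\; \eta^n \widehat{\gamma}_n^{*}/|x|^n\bigr\}.
\]
Introduce
\[
H(z) := \frac{1}{2\pi i}\int_{-\mu}^\mu \frac{h(t)}{t - z}\,dt, \qquad z \in \mathbb{C}\setminus[-\mu,\mu],
\]
with Plemelj boundary jump $H^+ - H^- = h$ on $(-\mu,\mu)$, and define $g(z) := f_\pm(z) - H(z)$ on $S_\mu^\pm$. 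A direct boundary-value calculation shows that both limits of $g$ at a point $x \in (-\mu,\mu)$ equal $\tfrac12(f_+ + f_-)(x) - p(x)$, where $p$ is the principal-value integral, so Morera's and Painlev\'e's theorems produce an extension $g \in \mathrm{Hol}(\{|z|<\mu\})$. In particular, the Taylor coefficients $\alpha_n$ of $g$ at the origin decay geometrically.

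To finish, I compare Taylor expansions of $f_+$ and $g + H^+$ at $0$: the relation $f_+ = g + H^+$ on the real axis gives
\[
\frac{c_n}{n!} = \alpha_n + \tilde{\beta}_n,
\]
where the flatness of $h$ at $0$ combined with a standard series expansion of $H$ at the origin identifies
\[
\tilde{\beta}_n = \frac{1}{2\pi i}\int_{-\mu}^\mu \frac{h(t)}{t^{n+1}}\,dt.
\]
Since $\alpha_n$ decays geometrically, the whole argument reduces to a geometric bound $|\tilde{\beta}_n| \lesssim R^{-n}$ on these moments; such a bound forces $\sum c_n z^n/n!$ to converge in a neighborhood of $0$ and yields the desired $g_0 \in C^\omega(0)$.

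The main obstacle is precisely this moment estimate. The homogeneous bound $|h^{(N)}(x)| \lesssim C^N M_N \gamma_N$ yields via Taylor remainder $|h(t)| \lesssim (Ct)^N m_N \gamma_N$ with $m_N = M_N/N!$, which in the non-analytic $\widehat{\gamma}$ regime is not geometric in $N$; the near-origin bound $|h^{(N)}(x)| \lesssim \eta^N \widehat{\gamma}_N^{*}/|x|^N$ alone fails to be integrable on $[0,t]$. I expect the correct resolution to be a two-scale Taylor remainder argument: split the remainder integral for $h$ at an intermediate $\sigma \in (0,t)$, apply the uniform Carleman bound on $[0,\sigma]$ and the sharper near-origin bound on $[\sigma,t]$, optimize $\sigma$ and $N$ jointly for each fixed $|t|$, and then integrate against $|t|^{-n-1}\,dt$. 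Combined with an integration-by-parts step that exploits the $C^\infty$-flatness of $h$ at $0$, this should produce the geometric decay of $\tilde{\beta}_n$ needed to close the argument.
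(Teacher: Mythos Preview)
Your approach is incomplete, and the gap you flag is genuine and not repaired by the ``two-scale Taylor remainder'' idea you sketch. That idea uses only the real-axis bounds
\[
|h^{(N)}(x)|\le C\min\Bigl\{C^{N}M_{N}\gamma_{N},\;\eta^{N}\widehat{\gamma}_{N}|x|^{-N}\Bigr\},
\]
and in the non-analytic $\widehat{\gamma}$ regime both quantities $M_{N}\gamma_{N}/N!$ and $\widehat{\gamma}_{N}/N!=\widehat{L}(N)^{N}$ grow faster than any geometric sequence. If you run the integral-remainder split at $\sigma\in(0,t)$ and then integrate against $|t|^{-n-1}$, the two pieces are controlled by expressions of the type $C^{N}m_{N}\gamma_{N}\,\sigma\,\delta^{N-n-1}$ and $(\eta\widehat{L}(N)/\sigma)^{N}\delta^{N-n}$; for the first to be bounded one needs $\sigma$ (or $\delta$) to decay super-geometrically, which then blows up the second, and vice versa. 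There is no choice of $N,\sigma$ that makes $\tilde\beta_{n}$ geometric from these bounds alone. In other words, the essential extra datum --- the holomorphy of $f_{\pm}$ in the half-disks $S_{\mu}^{\pm}$ --- has been spent on producing the holomorphic $g$, but is not used at the step that actually fails.

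The paper's proof proceeds on the Borel side and exploits that holomorphy directly. Using the analytic approximation adapted to the $+$ case (Lemma~\ref{lem:analytic aprox for =00005Cplus case}) together with the contour estimate of Lemma~\ref{lem:Main estimate genrlize borel class +}, one shows that for every $f_{+}\in B_{\eta}^{+}(M\gamma,\widehat{\gamma};0)$ the entire function $\mathcal{B}_{\gamma}f_{+}$ satisfies $|\mathcal{B}_{\gamma}f_{+}(z)|\lesssim E(2|z|/\mu)$ on the closed right half-plane; by symmetry the same bound holds for $\mathcal{B}_{\gamma}f_{-}$ on the closed left half-plane. Hence $F=\mathcal{B}_{\gamma}f_{+}=\mathcal{B}_{\gamma}f_{-}$ is entire with $|F(z)|\lesssim E(2|z|/\mu)$ everywhere, and an external result (\cite[Theorem~7]{kiro2018taylor}) converts this global $E$-type bound into $F\in\mathcal{B}_{\gamma}C^{\omega}(0)$. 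The argument never attempts to bound moments of a flat $h$; the half-disk analyticity is what produces the half-plane growth control, and that is where the work happens.
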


\begin{proof}[Proof of Theorem \ref{thm: quasianalytic}]
Let $\eta>1$ and $f\in  B_{1/\eta}(M\gamma,\widehat{\gamma};J)$
with $f^{(n)}(0)\equiv0$ for all $n\geq0.$ Consider the factorization
$f=f_{+}+f_{-}$ with $f_{\pm}\in B_{\eta_{0}}(M\gamma,\widehat{\gamma};J)$
for some $1/\eta<\eta_{0}<1$. By the linearity of $\mathcal{B_{\gamma}},$
we have $\mathcal{B_{\gamma}}f=\mathcal{B_{\gamma}}f_{+}+\mathcal{B_{\gamma}}f_{-}$.
By Theorem \ref{thm:Main thm non analytic}, $\mathcal{B_{\gamma}}f\in A(M,\gamma;0)$
with $\left(\mathcal{B_{\gamma}}f\right)^{(n)}(0)\equiv0$ for all
$n\geq0$. Since $A(M,\gamma;0)\subset C^{M}\left(0\right)$ and the latter is quasianalytic, we conclude that 
\[
\mathcal{B_{\gamma}}f\equiv0\quad\Rightarrow\quad\mathcal{B_{\gamma}}f_{+}\equiv-\mathcal{B_{\gamma}}f_{-}.
\]
Therefore $\mathcal{B_{\gamma}}f_{\pm}\in\mathcal{B}_{\gamma}B_{\eta}^{+}(M\gamma,\widehat{\gamma};0)\cap\mathcal{B}_{\gamma}B_{\eta}^{-}(M\gamma,\widehat{\gamma};0)$,
which by Lemma \ref{lem:intersection pm} implies that $\mathcal{B_{\gamma}}f_{\pm}\in\mathcal{B}_{\gamma}\left(C^{\omega}\left(0\right)\right)$.
Since the map $\mathcal{B}_{\gamma}$ is injective in the classes
$B_{\eta}^{\pm}(M\gamma,\widehat{\gamma};J)$ and $C^{\omega}\left(0\right),$ we
have that $f_{\pm}\in C^{\omega}\left(0\right).$ We conclude that
$f\in C^{\omega}\left(0\right)$, with $f^{(n)}(0)\equiv0$ for all
$n\geq0,$ and therefore $f\equiv0$ as claimed.
\end{proof}
\begin{rem}
The same technique was used in \cite{kiro2018taylor}, to deduce a
new Phragm\'{e}n--Lindel\"of type theorem from quasianalytic Beurling
classes $C_{0}^{M}\left(I\right).$
\end{rem}

\section{The inclusion $\lpg A(M,\gamma;I)\subseteq B_{\eta}(M\gamma,\widehat{\gamma};I)$}

\subsection{Estimates for the derivative of $K$}

Recall that 
\[
K(t)=\frac{1}{2\pi i}\int_{c-i\infty}^{c+i\infty}t^{-z}\gamma(z)dz,\quad c>0,
\]
and put 
\[
K_{1}\left(t\right):=K\left(e^{t}\right)=\frac{1}{2\pi i}\int_{c-i\infty}^{c+i\infty}e^{-zt}\gamma(z)dz.
\]

\begin{lem}
\label{lem: K_1 deriv estimate}If $\gamma$ is admissible with non-analytic $\widehat{\gamma}$ \emph{(}i.e. with $\lim_{\rho\to\infty}\varepsilon\left(\rho\right)=0$\emph{)},
then for any $\delta>0$, we have
\[
\left|\frac{d^{n}}{dt^{n}}\left(e^{t}K_{1}\left(t\right)\right)\right|\lesssim_{\delta}(1+\delta)^{n}\widehat{\gamma}_{n}K_{1}(t-\delta),
\]
\end{lem}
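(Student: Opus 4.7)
The plan is to start from the Mellin integral
\[
K_{1}(t)=\frac{1}{2\pi i}\int_{c-i\infty}^{c+i\infty}e^{-zt}\gamma(z)\,dz,
\]
differentiate under the integral, and collect powers of $1-z$ via the Leibniz rule, giving
\[
\frac{d^{n}}{dt^{n}}\bigl(e^{t}K_{1}(t)\bigr)\;=\;\frac{1}{2\pi i}\int_{c-i\infty}^{c+i\infty}(1-z)^{n}\,e^{(1-z)t}\,\gamma(z)\,dz.
\]
Heuristically, the factor $(1-z)^{n}$ is responsible for producing the combinatorial weight $\widehat{\gamma}_{n}=\sup_{\rho}\rho^{n}|\gamma(i\rho)|$ (which would arise by applying Laplace's method on the imaginary axis), while the remaining factor $e^{(1-z)t}\gamma(z)$ is responsible for the $K_{1}$-like decay in $t$ (via the saddle-point asymptotic of Theorem A).

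To make this precise, I would perform a saddle-point analysis on a shifted vertical contour $\{\operatorname{Re}(z)=c^{*}(n,t)\}$, where $c^{*}\in(1,s^{*}(t))$ is the real critical point of the combined integrand, solving $\log L(c)+\varepsilon(c)=t+n/(1-c)$. For $n\ll s^{*}(t)\varepsilon(s^{*}(t))$ the saddle lies close to $s^{*}(t)$; for larger $n$ it drifts toward $c^{*}\approx 1+n/t$ (when $n/t$ is of moderate size). Laplace's method at $c^{*}$, using the asymptotic $|\gamma(c+i\tau)|\asymp\gamma(c)\exp(-\tfrac{1}{2}\tau^{2}\varepsilon(c)/c)$ justified by admissibility conditions (B) and (D), yields
\[
\left|\tfrac{d^{n}}{dt^{n}}\bigl(e^{t}K_{1}(t)\bigr)\right|\;\lesssim\;(c^{*}-1)^{n}\exp\!\bigl(-\tfrac{c^{*}n}{c^{*}-1}\bigr)\cdot e^{t}K_{1}(t),
\]
where the factor $e^{t}K_{1}(t)$ is recovered (up to the Laplace prefactor) from Theorem A.

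The final step is to compare this with $(1+\delta)^{n}\widehat{\gamma}_{n}K_{1}(t-\delta)$. Using the identity $\tfrac{d}{dt}(s^{*}(t)\varepsilon(s^{*}(t)))=s^{*}(t)$, derivable by differentiating the saddle-point equation $\log L(s^{*})+\varepsilon(s^{*})=t$, one obtains $K_{1}(t-\delta)/K_{1}(t)\asymp\exp(\delta s^{*}(t))$; meanwhile the maximizer $\rho_{n}^{*}$ of $\rho\mapsto\rho^{n}\exp(-\tfrac{\pi}{2}\rho\varepsilon(\rho))$ satisfies $\rho_{n}^{*}\varepsilon(\rho_{n}^{*})\sim 2n/\pi$ and gives $\widehat{\gamma}_{n}\asymp(\rho_{n}^{*}/e)^{n}$. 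The desired inequality then reduces to a pointwise comparison between $(c^{*}-1)^{n}\exp(-c^{*}n/(c^{*}-1))\,e^{t}$ and $(1+\delta)^{n}(\rho_{n}^{*}/e)^{n}\exp(\delta s^{*}(t))$, which I would verify by splitting into the two regimes above: in the small-$n$ regime $c^{*}\approx s^{*}(t)$ is large and $\delta s^{*}(t)$ dominates, whereas in the large-$n$ regime $c^{*}-1\approx n/t$, so $\exp(-c^{*}n/(c^{*}-1))\asymp e^{-t}$ cancels the factor $e^{t}$ and the remaining comparison becomes a manageable Stirling-type estimate.

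The main obstacle is the coupled dependence of $c^{*}(n,t)$ on both parameters, which forces a case analysis and requires uniform control of the Laplace-method remainder across regimes. The non-analyticity hypothesis $\varepsilon(\rho)\to 0$ is essential throughout: it provides the super-exponential growth $\widehat{\gamma}_{n}/n!\to\infty$, which supplies the slack needed both to absorb the polynomial Laplace-correction factors and to close the comparison inequality within the $(1+\delta)^{n}$ buffer.
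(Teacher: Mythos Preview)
Your proposal takes a substantially different route from the paper and, as written, has a genuine gap.

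The paper does \emph{not} use a vertical contour through a real saddle $c^{*}(n,t)$. Instead it first proves the cleaner estimate $|K_{1}^{(n)}(t)|\lesssim_{\delta}(1+\delta)^{n}\widehat{\gamma}_{n}K_{1}(t-\delta)$ and only afterwards applies Leibniz to pass to $e^{t}K_{1}$. For $K_{1}^{(n)}$ it deforms the inverse-Mellin contour to
\[
\Gamma=(-i\infty,-i\rho_{0}]\;\cup\;\{\rho_{0}e^{i\theta}:|\theta|\le\tfrac{\pi}{2}\}\;\cup\;[i\rho_{0},i\infty),
\]
where $\rho_{0}$ is chosen from $t$ alone via $\log L(\rho_{0})+\varepsilon(\rho_{0})=t-\delta$. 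On the imaginary-axis pieces one has $|z^{n}e^{-zt}\gamma(z)|=\rho^{n}|\gamma(i\rho)|$, whose supremum is by definition essentially $\widehat{\gamma}_{n}$; the semicircular piece is handled the same way after showing $|e^{-zt}\gamma(z)|\le\exp(-\tfrac{\pi}{2}\rho_{0}\varepsilon(\rho_{0})(1+o(1)))$ uniformly in $\theta$. The $K_{1}(t-\delta)$ factor then drops out from $e^{-\delta\rho_{0}\varepsilon(\rho_{0})}$. The key structural point is that the contour depends only on $t$, so the $n$- and $t$-dependences decouple completely.

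Your vertical-line approach loses exactly this decoupling, and the sketch breaks at two places. First, on $\{\operatorname{Re}z=c\}$ the factor $|1-z|^{n}=((c-1)^{2}+y^{2})^{n/2}$ \emph{grows} in $|y|$, so the dominant contribution to the integral is not near $y=0$; the Gaussian approximation $|\gamma(c+i\tau)|\asymp\gamma(c)\exp(-\tfrac{1}{2}\tau^{2}\varepsilon(c)/c)$ is only valid for $|\tau|\ll c$ and cannot be used to localize. The true balance of $|1-z|^{n}$ against $|\gamma(z)|$ happens at large imaginary part---which is precisely why the imaginary axis, where $\widehat{\gamma}_{n}$ lives, is the right contour. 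Second, your displayed bound $(c^{*}-1)^{n}\exp(-c^{*}n/(c^{*}-1))\cdot e^{t}K_{1}(t)$ is not what the saddle value gives: working it out yields an extra factor $e^{-c^{*}\varepsilon(c^{*})}$, and since $c^{*}<s^{*}(t)$ and $\rho\mapsto\rho\varepsilon(\rho)$ is increasing, this is \emph{larger} than $K_{1}(t)\asymp e^{-s^{*}\varepsilon(s^{*})}$, so the formula as stated is too strong. The subsequent ``pointwise comparison'' between $(c^{*}-1)^{n}e^{-c^{*}n/(c^{*}-1)}$ and $(\rho_{n}^{*}/e)^{n}$ is asserted but links two unrelated extremal problems; without the imaginary-axis contour there is no mechanism that produces $\widehat{\gamma}_{n}$ rather than some other $n$-dependent quantity.
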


\begin{proof}
First we will show
\begin{equation}
\left|K_{1}^{\left(n\right)}\left(t\right)\right|\lesssim_{\delta}(1+\delta)^{n}\widehat{\gamma}_{n}K_{1}(t-\delta).\label{eq: K_1 estimate}
\end{equation}
We write
\[
K_{1}^{\left(n\right)}\left(t\right)=\frac{\left(-1\right)^{n}}{2\pi i}\int_{\Gamma}z^{n}e^{-zt}\gamma(z)dz,
\]
for a curve $\Gamma$ that we will specify below. Fix $\delta>0$
and denote by $\rho_{0}=\rho_{0}\left(t,\delta\right)$ the solution
to 
\[
\log L\left(\rho\right)+\varepsilon\left(\rho\right)=t-\delta.
\]
Assume that $t$ is sufficiently large so that $\varepsilon\left(\rho_{0}\right)<\delta$,
and consider the curve 
\[
\Gamma=\Gamma_{1}+\Gamma_{2}+\Gamma_{3}=\left(-i\infty,-i\rho_{0}\right]+\left\{ \rho_{0}e^{i\theta}:\theta\in\left[-\tfrac{\pi}{2},\tfrac{\pi}{2}\right]\right\} +\left[i\rho_{0},i\infty\right).
\]
\begin{figure}[h]

\includegraphics[scale=0.6]{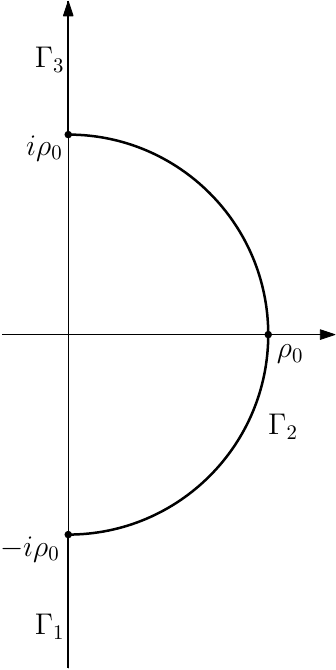}\caption{$\Gamma=\Gamma_{1}+\Gamma_{2}+\Gamma_{3}$}
\end{figure}

By (\ref{eq:gamma asymp full}) we have for $z=\rho_{0}e^{i\theta}$
\begin{align*}
\left|e^{-zt}\gamma(z)\right| & =\exp\left[\rho_{0}\cos\theta\left(\log L\left(\rho_{0}\right)-t\right)-\rho_{0}\theta\sin\theta\varepsilon\left(\rho_{0}\right)+o\left(\rho_{0}\varepsilon\left(\rho_{0}\right)\right)\right]\\
& =\exp\left[-\rho_{0}\cos\theta\left(\varepsilon\left(\rho_{0}\right)+\delta\right)-\rho_{0}\theta\sin\theta\varepsilon\left(\rho_{0}\right)+o\left(\rho_{0}\varepsilon\left(\rho_{0}\right)\right)\right]\\
& \leq\exp\left[-2\cos\theta\rho_{0}\varepsilon\left(\rho_{0}\right)-\rho_{0}\theta\sin\theta\varepsilon\left(\rho_{0}\right)+o\left(\rho_{0}\varepsilon\left(\rho_{0}\right)\right)\right]
\end{align*}
Notice that in the interval $\left[0,\tfrac{\pi}{2}\right],$ the
function $\theta\mapsto-2\cos\theta-\theta\sin\theta$ is increasing
and bounded from above by $-\frac{\pi}{2}.$ So, for $z\in\Gamma_{2}$
and $t$ sufficiently large, we have 
\[
\left|e^{-zt}\gamma(z)\right|\leq\exp\left[-\tfrac{\pi}{2}\rho_{0}\varepsilon\left(\rho_{0}\right)\left(1+o\left(1\right)\right)\right]\leq\exp\left[-\left(\tfrac{\pi}{2}-2\delta\right)\rho_{0}\varepsilon\left(\rho_{0}\right)+\delta\rho_{0}\varepsilon\left(\rho_{0}\right)\right]
\]
Thus, by Theorem A, we have 
\[
e^{\delta\rho_{0}\varepsilon\left(\rho_{0}\right)}\lesssim_{\delta}\left(K_{1}\left(t-2\delta\right)\right)^{\delta}\ensuremath{\lesssim_{\delta}}K_{1}\left(t-3\delta\right).
\]
We conclude that 
\begin{align*}
\left|\int_{\Gamma_{2}}z^{n}e^{-zt}\gamma(z)dz\right| & \lesssim_{\delta}K_{1}\left(t-3\delta\right)\rho_{0}^{n+1}e^{-\left(\tfrac{\pi}{2}-2\delta\right)\rho_{0}\varepsilon\left(\rho_{0}\right)}\\
& \leq\rho_{0}K_{1}\left(t-3\delta\right)\sup_{\rho>0}\rho^{n}e^{-\left(\tfrac{\pi}{2}-2\delta\right)\rho\varepsilon\left(\rho\right)}.
\end{align*}
Since the function $\varepsilon$ is slowly varying, we get 
\[
\sup_{\rho>0}\rho^{n}e^{-\left(\tfrac{\pi}{2}-2\delta\right)\rho\varepsilon\left(\rho\right)}\lesssim_{\delta}\left(1+3\delta\right)^{n}\sup_{\rho>0}\rho^{n}e^{-\tfrac{\pi}{2}\rho\varepsilon\left(\rho\right)}\lesssim_{\delta}\left(1+4\delta\right)^{n}\widehat{\gamma}_{n}.
\]
We obtained
\[
\left|\int_{\Gamma_{2}}z^{n}e^{-zt}\gamma(z)dz\right|\lesssim_{\delta}\left(1+4\delta\right)^{n}\widehat{\gamma}_{n}K_{1}\left(t-4\delta\right).
\]
If $z=i\rho\in\Gamma_{3},$ then 
\[
\left|e^{-zt}\gamma(z)\right|=\exp\left[-\tfrac{\pi}{2}\rho\varepsilon\left(\rho\right)\left(1+o\left(1\right)\right)\right].
\]
Thus
\begin{align*}
\left|\int_{\Gamma_{3}}z^{n}e^{-zt}\gamma(z)dz\right| & \lesssim_{\delta}e^{-\delta\rho_{0}\varepsilon\left(\rho_{0}\right)}\cdot\left(\sup_{\rho>0}\rho^{n}e^{-\left(\tfrac{\pi}{2}-2\delta\right)\rho\varepsilon\left(\rho\right)}\right)\int_{\rho_{0}}^{\infty}e^{-\delta\rho\varepsilon\left(\rho\right)}\\
& \lesssim_{\delta}K_{1}\left(t-4\delta\right)\cdot\left(1+4\delta\right)^{n}\widehat{\gamma}_{n}.
\end{align*}
The bound for $\int_{\Gamma_{1}}$ is obtained similarly, proving
$\eqref{eq: K_1 estimate}$. To finish the proof, we observe that
\[
\frac{d^{n}}{dt^{n}}\left(e^{t}K_{1}\left(t\right)\right)=e^{t}\sum_{k=0}^{n}{n \choose k}K_{1}^{(k)}(t).
\]
So 
\[
\left|\frac{d^{n}}{dt^{n}}\left(e^{t}K_{1}\left(t\right)\right)\right|\lesssim_{\delta}e^{t}K_{1}(t-\delta)\sum_{k=0}^{n}{n \choose k}\left(1+\delta\right)^{k}\widehat{\gamma}_{k}.
\]
Since $\left(\widehat{\gamma}_{k}\right)_{k}$ is eventually log-convex
with $\left(\frac{\widehat{\gamma}_{k}}{k!}\right)^{1/k}\to\infty,$
we have 
\[
\left|\frac{d^{n}}{dt^{n}}\left(e^{t}K_{1}\left(t\right)\right)\right|\lesssim_{\delta}\left(1+\delta\right)^{n}e^{t}K_{1}(t-\delta).
\]
Finally, the proof is complete by noticing that
\[
e^{t}K_{1}(t-\delta)\lesssim_{\delta}K_{1}(t-2\delta).
\]
\end{proof}

\subsection{Proof of the inclusion $\mathcal{L}_{\gamma}A\left(M,\gamma,I\right)\subseteq B_{\eta}\left(M\gamma,\widehat{\gamma};I\right)$
in the non-analytic case\label{subsec:1st inc}}

The proof of the inclusion $R_{\gamma}A\left(M,\gamma,I\right)\subseteq B_{\eta}\left(M\gamma,\widehat{\gamma};I\right)$,
requires the following lemma, its proof is given in Appendix A. 
\begin{lem}
\label{lem:three E inq}For any $\eta<1$, there exists $\delta=\delta(\eta)>0$
so that 
\[
E\left(t\delta\right)E(t\eta)\lesssim_{\eta}E\left(t\right),\quad\text{and \ensuremath{\quad E\left(t\delta\right)E(t\eta)\lesssim_{\eta}\frac{1}{\left|K(t)\right|},}}\quad\forall t>0.
\]
\end{lem}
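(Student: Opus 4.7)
The plan is to reduce both inequalities to an asymptotic comparison on the \emph{phase function} $\phi(t):=s_t\varepsilon(s_t)$, where $s_t$ is the unique saddle point satisfying $\log L(s_t)+\varepsilon(s_t)=\log t$. For $t$ in any bounded subinterval of $(0,\infty)$ the three factors $E(t\delta), E(t\eta), E(t)$ are bounded and $|K(t)|$ is bounded below, so the content of the lemma lies at $t\to\infty$. There, Theorems~A and~B of \S\ref{subsec:Asymptotic-behavior-of K and E} give
\[
\log E(t)=\phi(t)+O(\log t),\qquad -\log|K(t)|=\phi(t)+O(\log t),
\]
and both desired inequalities reduce to the single asymptotic estimate
\[
\phi(t\delta)+\phi(t\eta)\le\phi(t)+O(\log t)\quad\text{as }t\to\infty.
\]

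The key tool is the clean identity $\phi'(t)=s_t/t$. Implicit differentiation of the saddle equation gives $s_t'(t)\bigl(\varepsilon(s_t)/s_t+\varepsilon'(s_t)\bigr)=1/t$, and this is exactly the bracket that cancels in $\phi'(t)=s_t'(t)\bigl(\varepsilon(s_t)+s_t\varepsilon'(s_t)\bigr)$. I would then split according to $\beta:=\lim_{\rho\to\infty}\varepsilon(\rho)\in[0,2)$, which exists by condition~(C). In the \emph{analytic case} $\beta>0$: integrating $(\log L)'=\varepsilon/s\sim\beta/s$ via Karamata gives $L(s)\sim Cs^{\beta}$, so $s_t\sim c\,t^{1/\beta}$ and $\phi(t)\sim c\beta\,t^{1/\beta}$; the target becomes $\delta^{1/\beta}+\eta^{1/\beta}<1$, solvable by $\delta<(1-\eta^{1/\beta})^{\beta}$ since $\eta<1$, and the left-hand side then decays like $-t^{1/\beta}$, swamping the $O(\log t)$ correction. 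In the \emph{non-analytic case} $\beta=0$: conditions (B), (D) give $\varepsilon$ slowly varying and eventually decreasing, so $\log L(s)=o(\log s)$ and $L$ is slowly varying at infinity. The saddle equation yields $\log L(s_t)-\log L(s_{t\eta})\to\log(1/\eta)>0$, which combined with the slow variation of $L$ forces $s_t/s_{t\eta}\to\infty$, and a short computation gives $\phi(t\eta)/\phi(t)\to 0$. Any $\delta\in(0,\eta)$ then works.

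The main obstacle I anticipate is the quantitative control in the non-analytic case: the rate of decay of $\phi(t\eta)/\phi(t)$ depends on how slowly $L$ varies, and one must verify it is strong enough to absorb the $O(\log t)$ correction from Theorems~A and~B. I would handle this through the integral comparison
\[
\phi(t)-\phi(t\eta)=\int_{t\eta}^{t}\frac{s_\tau}{\tau}\,d\tau\ge s_{t\eta}\log(1/\eta),
\]
paired with $\phi(t\delta)\le s_{t\delta}\sup\varepsilon$ and the fact that $s_{t\delta}/s_{t\eta}$ can be made arbitrarily small uniformly in $t$ by shrinking $\delta$, as follows from $\log L(s_{t\delta})-\log L(s_{t\eta})\to\log(\delta/\eta)$ combined with the slow variation of $L$. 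Once the exponent $\phi(t\delta)+\phi(t\eta)-\phi(t)$ is controlled, re-exponentiation turns its exponential decay into the claimed $\lesssim_\eta$ bounds for all $t>0$.
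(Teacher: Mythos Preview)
Your reduction to the phase function $\phi(t)=s_t\varepsilon(s_t)$ and the clean identity $\phi'(t)=s_t/t$ are exactly the right ingredients and match the paper's approach. The paper, however, avoids your case split entirely: dividing your identity by $\phi$ gives
\[
(\log\phi)'(t)=\frac{1}{t\,\varepsilon(s_t)}\ge\frac{1}{t\,\|\varepsilon\|_\infty},
\]
and integrating from $\eta t$ to $t$ yields the uniform power-law bound $\phi(\eta t)\le\eta^{\alpha}\phi(t)$ with $\alpha=1/\|\varepsilon\|_\infty$, valid simultaneously in the analytic and non-analytic regimes. One then simply chooses $\delta$ with $\delta^{\alpha}+\eta^{\alpha}<1$. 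So the paper's argument is a one-line consequence of the very identity you derived; your split into $\beta>0$ and $\beta=0$ is correct but unnecessary.

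One small imprecision: in the analytic case your claim ``$L(s)\sim Cs^{\beta}$ via Karamata'' is too strong---Karamata only gives $L(s)=s^{\beta}\ell(s)$ with $\ell$ slowly varying (e.g.\ $\varepsilon(s)=\beta+1/\log s$ yields $L(s)\asymp s^{\beta}\log s$). What you actually need, namely $\phi(t\eta)/\phi(t)\to\eta^{1/\beta}$, does follow directly from integrating $(\log\phi)'=1/(t\varepsilon(s_t))$ and using $\varepsilon(s_t)\to\beta$, so the gap is cosmetic. Your non-analytic argument via $\phi(t)-\phi(t\eta)\ge s_{t\eta}\log(1/\eta)$ and $s_{t\delta}/s_{t\eta}\to 0$ is valid, but again the uniform power-law bound above makes this delicacy unnecessary.
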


\begin{proof}[Proof of the inclusion $R_{\gamma}A\left(M,\gamma,I\right)\subseteq B_{\eta}\left(M\gamma,\widehat{\gamma};I\right)$
in the non-analytic case]
Without loss of generality, we may assume that $I=\left[0,1\right).$
Let $G\in A\left(M,\gamma,I\right)$ and $J=\left[0,a\right]\subset I$.
Put 
\[
f(x)=\left(\mathcal{L}_{\gamma}G\right)\left(x\right)=\int_{0}^{\infty}G\left(xt\right)K\left(t\right)dt.
\]
By definition, there exists a $C>0$ such that 
\[
\left|G^{(n)}(t)\right|\lesssim_{\eta}C^{n}M_{n}E\left(t\eta\right),
\]
for any $\eta>1.$ Choosing $\eta$ so close to $1$ such that
$a<\frac{1}{\eta^{2}}$, for any $x\in J$, we have 
\begin{align*}
\left|f^{(n)}(x)\right| & \leq\left|\int_{0}^{\infty}t^{n}G^{(n)}\left(xt\right)K\left(t\right)dt\right|\lesssim C^{n}M_{n}\int_{0}^{\infty}t^{n}E\left(tx\eta\right)\left|K(t)\right|dt\lesssim C^{n}\left(M_{n}\int_{1}^{\infty}t^{n}E\left(t\eta^{-1}\right)\left|K(t)\right|dt+1\right).
\end{align*}
By Lemma \ref{lem:three E inq}, there is $\delta>0$ such that $E(\delta t)E\left(t\eta^{-1}\right)\left|K(t)\right|\lesssim 1$.
Therefore 
\[
\left|f^{(n)}(x)\right|\lesssim C^{n}\left(M_{n}\int_{1}^{\infty}t^{n}\frac{1}{E\left(\delta t\right)}dt+1\right)\lesssim C^{n}\left(M_{n}\int_{1}^{\infty}t^{n}\frac{\gamma(n+3)}{\left(\delta t\right)^{n+2}}dt+1\right)\lesssim\left(\frac{C}{\delta}\right)^{n}M_{n}\gamma(n+3).
\]
Since $\left(\frac{\gamma(n+3)}{\gamma(n+1)}\right)^{1/n}\to 1$, we
conclude that $f\in C_{\infty}^{M\cdot\gamma}\left(I\right).$

Next we will show $f\left(e^{x}\right)\in C_{\eta}^{\widehat{\gamma}}\left(\left[-\infty,0\right)\right)$
for any $\eta>1.$ Put $K_{1}\left(u\right):=K\left(e^{u}\right)$
and $K_{2}(u)=e^{u}K_{1}(u)$. Then
\[
f\left(e^{x}\right)=\int_{0}^{\infty}G\left(e^{x}t\right)K\left(t\right)dt\stackrel{t=e^{u}}{=}\int_{\mathbb{R}}G\left(e^{x+u}\right)K_{1}(u)e^{u}du=\int_{\mathbb{R}}G\left(e^{y}\right)K_{1}(y-x)e^{y-x}dy=\int_{\mathbb{R}}G\left(e^{y}\right)K_{2}(y-x)dy.
\]
Thus
\[
\frac{d^{n}}{dx^{n}}f\left(e^{x}\right)=(-1)^{n}\int_{\mathbb{R}}G\left(e^{y}\right)K_{2}^{(n)}\left(y-x\right)dy
\]
By Lemma \ref{lem: K_1 deriv estimate}, we have 
\[
\left|K_{2}^{(n)}\left(y-x\right)\right|\lesssim_{\delta}(1+\delta)^{n}\widehat{\gamma}_{n}\left|K_{1}(y-x-\delta)\right|,
\]
so we conclude that 
\[
\left|\frac{d^{n}}{dx^{n}}f\left(e^{x}\right)\right|\lesssim_{\delta}(1+\delta)^{n}\widehat{\gamma}_{n}\int_{\mathbb{R}}G\left(e^{y}\right)\left|K_{1}\left(y-x-\delta\right)\right|dy.
\]
Let $J$ be a compact subset of $\left[-\infty,0\right).$ We choose
$\delta$ small enough so that $-x-\delta>\frac{\delta}{2}$, for any
$x\in J$. Thus 
\[
\left|\frac{d^{n}}{dx^{n}}f\left(e^{x}\right)\right|\lesssim_{\delta}(1+\delta)^{n}\widehat{\gamma}_{n}\int_{\mathbb{R}}E\left(e^{y+\delta/4}\right)\left|K_{1}\left(y+\tfrac{\delta}{2}\right)\right|dy\lesssim_{\delta}(1+\delta)^{n}\widehat{\gamma}_{n}.
\]
We obtained that $f\left(e^{x}\right)\in C_{\eta}^{\widehat{\gamma}}\left(\left[-\infty,0\right)\right),$
for any $\eta>1$, and hence $f\in B_{\eta}\left(M\gamma,\widehat{\gamma};I\right)$.
\end{proof}

\subsection{The inclusion $\lpg A\left(M,\gamma;I\right)\subseteq D\left(M,\gamma;I\right)$
in the analytic case}

Throughout this section, we assume that $\gamma$ is an admissible
weight with analytic $\widehat{\gamma}$ (i.e., with $\lim_{\rho\to\infty}\varepsilon(\rho)>0)$
and that $I=\left[0,1\right)$. Recall that 
\[
\Omega_{\eta}\left(\gamma\right):=\left\{ z:\sup_{t>0}E(t\eta)\left|K\left(\frac{t}{z}\right)\right|<\infty\right\} \text{.}
\]

\begin{proof}[Proof of the inclusion $\lpg A\left(M,\gamma;I\right)\subseteq D\left(M,\gamma;I\right)$]
. For $\eta>1,$ assume that a function $F$ satisfies
\[
|F^{(n)}(t)|\leq C^{n+1}M_{n}E(t\eta).
\]
Put $f=\lpg F$. By Lemma \ref{lem:three E inq}, there exists a $\delta>0$
so that for any $z\in\Omega_{\eta^{2}}(\gamma)$, we have 
\[
\left|\int_{0}^{\infty}F(t)K\left(\frac{t}{z}\right)\frac{dt}{z}\right|\lesssim\frac{1}{\left|z\right|}\int_{0}^{\infty}\frac{E(t\eta)}{E\left(t\eta^{2}\right)}dt\lesssim\frac{1}{\left|z\right|}\int_{0}^{\infty}\frac{1}{E\left(t\delta\right)}dt<\infty.
\]
So, by dominated the convergence theorem, the function 
\[
f(z)=\int_{0}^{\infty}F(t)K\left(\frac{t}{z}\right)\frac{dt}{z}
\]
is holomorphic in $\Omega_{\eta^{2}}(\gamma)$. For a given $n\geq0$,
write 
\[
F(t)=\sum_{0\leq k<n}\frac{F^{(n)}(0)}{n!}t^{n}+R_{n}(t,F),
\]
and notice that 
\[
R_{n}(f,z)=\int_{0}^{\infty}R_{n}(t,F)K\left(\frac{t}{z}\right)\frac{dt}{z}.
\]
Writing $z=re^{i\theta}$, and making the change of variable $t=r\cdot u$,
we have 
\[
R_{n}(f,z)=\int_{0}^{\infty}R_{n}(ru,F)K\left(ue^{-i\theta}\right)e^{-i\theta}du.
\]
By the Lagrange form of the remainder $R_{n}(t,F)$, we have for $t\geq0$
\[
\left|R_{n}(t,F)\right|\leq C^{n+2}\frac{M_{n+1}}{\left(n+1\right)!}E(t\eta)t^{n+1}.
\]
Therefore by Lemma \ref{lem:three E inq}, there is $\delta>0$ so
that 
\[
\left|R_{n}(f,z)\right|\leq C^{n+2}\frac{M_{n+1}}{\left(n+1\right)!}\int_{0}^{\infty}\left(ru\right)^{n+1}E(rt\eta)\left|K\left(ue^{-i\theta}\right)\right|du\lesssim_{\eta}C_{\eta}^{n}\frac{M_{n+1}}{\left(n+1\right)!}|z|^{n+1}\int_{0}^{\infty}\frac{t^{n}}{E\left(\delta t\right)}dt.
\]
By definition 
\[
E\left(\delta t\right)\geq\frac{\delta^{n+3}t^{n+2}}{\gamma_{n+3}},
\]
so, we have
\[
\int_{0}^{\infty}\frac{t^{n}}{E\left(\delta t\right)}dt=\int_{0}^{1}\frac{t^{n}}{E\left(\delta t\right)}dt+\int_{1}^{\infty}\frac{t^{n}}{E\left(\delta t\right)}dt\lesssim_{\eta}1+\frac{\gamma(n+3)}{\delta^{n}}\lesssim_{\eta}\frac{\gamma(n+3)}{\delta^{n}}.
\]
Since there is a $C>0$ such that 
\[
\frac{M_{n+1}}{M_{n}}\leq C^{n+1}\quad\text{and\ensuremath{\quad}}\frac{\gamma_{n+3}}{\gamma_{n+1}}\leq C^{n+1},
\]
we have established
\[
\left|R_{n}(f,z)\right|\leq C_{\eta}^{n}\frac{M_{n}}{n!}\gamma_{n+1}|z|^{n+1},\quad\forall z\in\Omega_{\eta^{2}}(\gamma).
\]
This establishes the estimate, as claimed.
\end{proof}

\section{Almost holomorphic extensions in Non-homogeneous Carleman classes}

\subsection{Almost holomorphic extensions of functions in $B(\widehat{\gamma},M;I)$:
the non-analytic case}

Throughout this section we fix an admissible weight $\gamma$ with
non-analytic $\widehat{\gamma}$ and a regular sequence $M=\left(n!m_{n}\right)_{n\geq0}$,
and put
\[
\gamma_{n}=L^{n}(n),\quad\widehat{\gamma}_{n}=n!\widehat{L}^{n}(n).
\]

\begin{claim}
\label{claim:m_k reg first claim}Let $C_{1}>0$. There exist constants
$C_{2},C_{3},C_{4}>0$ so that
\end{claim}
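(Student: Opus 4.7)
The claim is a technical regularity estimate controlling how the basic ingredients $m_k = M_k/k!$ and the associated $L(k) = \gamma_k^{1/k}$, $\widehat{L}(k) = (\widehat{\gamma}_k/k!)^{1/k}$ behave when the index is perturbed in a range governed by $C_1$. This is precisely what will be needed in the almost-holomorphic-extension construction of the following subsection, where Taylor sums are truncated at an optimal index depending on the imaginary part of the point and then compared with a nearby shifted truncation. My plan is to split the conclusion into the three comparability bounds suggested by the three constants $C_2, C_3, C_4$, and to derive each from a clean combination of admissibility axioms for $\gamma$ and regularity axioms for $M$.

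For the bounds on $L$ and $\widehat{L}$, the main input is admissibility condition (B), which states that $\varepsilon(\lambda\rho) \sim \varepsilon(\rho)$ locally uniformly in $\lambda$. Writing $\log L(s) = \int^{s} \varepsilon(\rho)\, d\rho/\rho + \mathrm{const}$ and integrating this slowly-varying estimate over a dilation window $[\rho, \lambda\rho]$, one obtains $L(k) \asymp L(n)$ and similarly $\widehat{L}(k) \asymp \widehat{L}(n)$ whenever $k/n \in [C_1^{-1}, C_1]$, uniformly up to a constant depending on $C_1$. This absorbs into $C_2$. For the $m_k$ part, the key inputs are regularity properties (2) and (3) of $M$. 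The eventual monotonicity of $m_k^{1/k}$ gives one direction of comparison, while iteration of $m_{k+1} \le C m_k^{1+1/k}$ a bounded number of steps gives the reverse direction; log-convexity (property (1)) then smooths out any residual irregularity. Combining yields the comparability of $m_k$ and $m_n$ up to a constant exponential in $|k-n|$, which is absorbed into $C_3$. The third bound, corresponding to $C_4$, is obtained by multiplying the two, giving a combined estimate for products like $m_k L^k(k)$ or $m_k \widehat{L}^k(k)$ that naturally appears in the formulas.

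The main obstacle I expect is the interplay between the continuous admissibility hypotheses on $\gamma$ (stated in terms of a function $\varepsilon(\rho)$ on $\mathbb{R}_+$) and the discrete nature of the sequence $(m_k)$: condition (B) is a statement about $\rho \to \infty$ through real values, while the claim is about integer indices. The resolution is to use the eventual log-convexity and the growth bound $m_{k+1} \le C m_k^{1+1/k}$ to interpolate between integer values, effectively showing that $m_k^{1/k}$ is close to a continuous slowly-varying function of $k$ on any bounded dilation window. A secondary point is uniformity: although $C_2, C_3, C_4$ are allowed to depend on $C_1$, it will be important later that the dependence is no worse than a fixed power of $C_1$, so that the estimates can be iterated and chained without loss. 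Finally, the non-analyticity assumption $\varepsilon(\rho) \to 0$ simplifies matters, since it makes $\widehat{L}(n) \to 0$ and therefore polynomial-in-$n$ factors arising in the iteration are benign.
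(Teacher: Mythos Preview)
Your proposal misidentifies what the claim actually asserts. The three enumerated conclusions are purely about the sequence $m_k = M_k/k!$ and have nothing to do with $L(k)$, $\widehat{L}(k)$, or the admissibility of $\gamma$:
\begin{enumerate}
\item $1 \le m_k^{1/k} \le m_1 \cdot k^{C_2}$ for all $k \ge 2$;
\item $m_k^{1/k} \le m_n^{1/n} \le C_3\, m_k^{1/k}$ for all $k \in \mathbb{N}$ and $n \in [k, C_1 k]$;
\item $m_k^{n/k} \le C_4^{k+n} m_n$ for all $n, k \in \mathbb{N}$.
\end{enumerate}
So your plan for $C_2$ (integrating the slowly-varying $\varepsilon$ to control $L$ and $\widehat{L}$) and for $C_4$ (a product estimate involving $m_k L^k(k)$) are aimed at the wrong targets. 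The non-analyticity hypothesis $\varepsilon(\rho)\to 0$ plays no role here either.

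The actual proof is elementary and uses only the regularity axioms for $M$. The single engine is the two-sided bound
\[
m_k^{1/k} \le m_{k+1}^{1/(k+1)} \le C^{1/(k+1)} m_k^{1/k},
\]
obtained from axiom (3), $m_{k+1}\le C m_k^{1+1/k}$, together with the eventual monotonicity of $m_k^{1/k}$ (axiom (2)). Iterating the upper half from $1$ to $k$ gives $m_k^{1/k} \le C^{\sum_{j\le k}1/j} m_1 \lesssim k^{C_2} m_1$, which is assertion (1). Iterating from $k$ to $n \le C_1 k$ gives the factor $\exp\bigl(\log C \sum_{j=k+1}^{C_1 k} 1/j\bigr)$, which is bounded uniformly in $k$, giving assertion (2). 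Assertion (3) splits into the trivial case $n \ge k$ (where monotonicity of $m_k^{1/k}$ gives $m_k^{n/k}\le m_n$ directly) and the case $n<k$, where the same iteration gives $m_k^{1/k} \le m_n^{1/n} \exp\bigl(\log C(\log k - \log n + 1)\bigr)$, and one checks that $n(\log k - \log n + 1) \le C'(k+n)$.

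Your instinct for $C_3$ was closest to the mark, but note that the conclusion controls $m_n^{1/n}$ against $m_k^{1/k}$ by a constant independent of $k,n$ (not exponential in $|k-n|$), and this uniformity is exactly what makes the later almost-holomorphic extension work.
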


\begin{enumerate}
\item $1\le m_{k}^{1/k}\leq m_{1}\cdot k^{C_{2}},$ for any $k\geq2$;
\item $m_{k}^{1/k}\leq m_{n}^{1/n}\leq C_{3}m_{k}^{1/k}$ for any $k\in\mathbb{N}$
and any $n\in\left[k,C_{1}k\right]$;
\item $m_{k}^{n/k}\leq C_{4}^{k+n}m_{n}$ for any $n,k\in\mathbb{N}.$
\end{enumerate}
\begin{proof}
By regularity assumption 3, we have 
\begin{equation}
m_{k}^{1/k}\leq m_{k+1}^{1/(k+1)}\leq C^{1/\left(k+1\right)}m_{k}^{1/k}\label{eq:temp m_k^1/k reg 3}
\end{equation}
Therefore
\[
m_{k}^{1/k}\leq C^{\sum_{j=1}^{k}\frac{1}{j}}m_{0}\leq\left(2C\right)^{\log\left(k+1\right)}\leq k^{C_{2}}m_{1},
\]
proving the first assertion.

As for the second assertion, if $n\geq k$, then iterating \ref{eq:temp m_k^1/k reg 3}
yields 
\[
m_{k}^{1/k}\leq m_{n}^{1/n}\leq m_{k}^{1/k}\cdot\exp\left[\log C\sum_{j=k+1}^{n}\frac{1}{k}\right].
\]
In particular, if $n\leq C_{1}k$, then we get 
\[
m_{n}^{1/n}\leq m_{k}^{1/k}\cdot\exp\left[\log C\sum_{j=k+1}^{C_{1}k}\frac{1}{k}\right]\leq m_{k}^{1/k}\cdot\exp\left[\log C\left(\log C_{1}+1\right)\right]:=m_{k}^{1/k}C_{3},
\]
proving the second assertion.

Finally, if $n\geq k$, then 
\[
m_{k}^{1/k}\leq m_{n}^{1/n},\quad\Rightarrow\quad m_{k}^{n/k}\leq m_{n},
\]
while if $n<k,$then iterating \ref{eq:temp m_k^1/k reg 3} yields
\[
m_{k}^{1/k}\leq\exp\left[\log C\sum_{j=n+1}^{k}\frac{1}{j}\right]m_{n}^{1/n}\leq m_{n}^{1/n}\exp\left[\log C\left(\log k+1-\log n\right)\right].
\]
We choose $C_{4}>0$ so that 
\[
\exp\left[\log Cn\left(\log k+1-\log n\right)\right]\leq C_{4}^{n+k},
\]
and obtain the third assertion
\end{proof}
\begin{rem}
\label{rmk: slowly varying}In the last assertion of the above claim,
if in addition the sequence $v_{k}=m_{k}^{1/k}$ is slowly varying
(i.e., if $v_{2k}/v_{k}\to1$ as $k\to\infty)$, then $C_{4}$ can
be chosen arbitrarily close to one, i.e., \emph{for any $\eta>1$,
there exists $C_{\eta}>0$ so that
\[
m_{k}^{n/k}\leq C_{\eta}\eta^{k+n}m_{n}.
\]
}
\end{rem}

\begin{claim}
\label{claim:m_k reg 2nd}Let $C_{1}>1$ and $\eta>1.$ There exist
$k_{0}>0$ and $C_{3}>0$, such that for any $k_{0}\leq k\leq n\leq C_{1}k$,
the following hold:
\end{claim}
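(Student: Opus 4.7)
The plan is to refine the universal bounds of Claim~\ref{claim:m_k reg first claim} by restricting to indices $k \geq k_0$, converting the constants $C_3, C_4$ into $\eta$-sized factors at the cost of a $k_0$ threshold. Given the role of the range $[k, C_1 k]$ throughout this section, I expect conclusions of the form $m_n^{1/n} \leq \eta\, m_k^{1/k}$, $L(n) \leq \eta\, L(k)$, $\widehat{L}(n) \leq \eta\, \widehat{L}(k)$, and an exponent-type sharpening such as $m_k^{n/k} \leq C_3\,\eta^{n+k} m_n$ of assertion~(3) of the previous claim.

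For the $L$-bound I would integrate $\varepsilon(s) = s L'(s)/L(s)$ to obtain $\log L(n) - \log L(k) = \int_k^n \varepsilon(\rho)\,\tfrac{d\rho}{\rho} \leq \sup_{\rho \geq k}\varepsilon(\rho)\cdot \log C_1$, and invoke the standing non-analytic hypothesis ($\varepsilon(\rho) \to 0$) to force this below $\log\eta$ once $k \geq k_0$. For $\widehat{L}$, I would use the representation $\widehat{\gamma}_n = \sup_{\rho>0}\rho^n \exp\!\bigl(-(1+o(1))\tfrac{\pi}{2}\rho\,\varepsilon(\rho)\bigr)$, track the maximizer $\rho_n$ as a slowly varying function of $n$ via admissibility axiom~(B), and again exploit $\varepsilon(\rho) \to 0$ on scales of length $\log C_1$. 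For $m_n^{1/n}$, I would telescope the regularity bound $m_{j+1}^{1/(j+1)} \leq C^{1/(j+1)} m_j^{1/j}$ and, following Remark~\ref{rmk: slowly varying}, use the slow-variation of $v_k := m_k^{1/k}$ to replace the $\log C_1$-factor by a quantity that tends to zero.

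The main obstacle is the $m$-part: bare regularity of $M$ only yields $v_n / v_k \leq C^{\log C_1}$, independent of $k$, so the improvement to $\eta$ genuinely requires the slow-variation input of Remark~\ref{rmk: slowly varying}. In the applications of this section $M$ enters only through the product $M\gamma$, whose root sequence $m_k^{1/k}\,L(k)$ inherits slow variation from the admissibility of $\gamma$, so the needed input is automatic. Once that point is articulated, each individual bound reduces to a routine choice of $k_0 = k_0(C_1,\eta)$ large enough for the corresponding slowly varying quantity to stay within a factor $\eta$ of itself on multiplicative scales of length $\log C_1$; the exponent-type bound $m_k^{n/k} \leq C_3\,\eta^{n+k} m_n$ then follows by combining the refined $m_n^{1/n}/m_k^{1/k} \leq \eta$ with the computation in the proof of assertion~(3) of Claim~\ref{claim:m_k reg first claim}.
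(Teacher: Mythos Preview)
You guessed at the missing enumeration and over-shot. The actual assertions are only two:
\[
\text{(1)}\quad \frac{1}{\eta^{n}}\ \le\ \frac{\widehat{\gamma}_{n}}{n!}\Bigl(\frac{1}{\eta\,\widehat{L}(k)}\Bigr)^{n}\ \le\ \frac{1}{\eta^{n/2}},
\qquad
\text{(2)}\quad 1\ \le\ m_{n}\,m_{k}^{-n/k}\ \le\ C_{3}^{\,n}.
\]
There is no separate statement about $L$, and the $m$-bound carries the constant $C_{3}$ from Claim~\ref{claim:m_k reg first claim}, not $\eta$.

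For (1) your instinct is right but your mechanism is heavier than needed. Since $\widehat{\gamma}_{n}/n!=\widehat{L}(n)^{n}$, assertion (1) is just $1\le \widehat{L}(n)/\widehat{L}(k)\le \eta^{1/2}$ for $k_{0}\le k\le n\le C_{1}k$. The paper gets this in one line from the monotonicity and slow variation of $\widehat{L}$ (so $\widehat{L}(C_{1}k)/\widehat{L}(k)\to 1$); there is no need to track the maximizer $\rho_{n}$ in the definition of $\widehat{\gamma}_{n}$.

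For (2) your worry is misplaced: nothing beyond Claim~\ref{claim:m_k reg first claim}(2) is used. The upper bound is literally $m_{n}m_{k}^{-n/k}=(m_{n}^{1/n}/m_{k}^{1/k})^{n}\le C_{3}^{\,n}$, and the lower bound is the monotonicity $m_{k}^{1/k}\le m_{n}^{1/n}$. No slow-variation hypothesis on $m_{k}^{1/k}$ (Remark~\ref{rmk: slowly varying}) enters here, so the obstacle you flagged does not arise.
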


\begin{enumerate}
\item $\frac{1}{\eta^{n}}\leq\frac{\widehat{\gamma}_{n}}{n!}\left(\frac{1}{\eta\widehat{L}(k)}\right)^{n}\leq\frac{1}{\eta^{n/2}}.$
\item $1\leq m_{n}m_{k}^{-n/k}\leq C_{3}^{n}$
\end{enumerate}
\begin{proof}
Recall that 
\[
\frac{\widehat{\gamma}_{k}}{n!}\left(\frac{1}{\eta\widehat{L}(k)}\right)^{n}=\left(\frac{\widehat{L}(n)}{\eta\widehat{L}(k)}\right)^{n}.
\]
The function $\widehat{L}$ is eventually non-decreasing,
thus for $k>k_{0}$,
\[
\left(\frac{\widehat{L}(k)}{\eta\widehat{L}(k)}\right)^{n}\leq\left(\frac{\widehat{L}(n)}{\eta\widehat{L}(k)}\right)^{n}\leq\left(\frac{\widehat{L}(Ck)}{\eta L(k)}\right)^{n}
\]
Since $\frac{\widehat{L}(Ck)}{\widehat{L}\left(k\right)}\to1$, we
have
\[
\frac{1}{\eta^{n}}\leq\left(\frac{\widehat{L}(n)}{\eta\widehat{L}(k)}\right)^{n}\leq\frac{1}{\eta^{n/2}}
\]
proving the first assertion.

As for the second assertion, by the second assertion of Claim \ref{claim:m_k reg first claim},
we have 
\[
m_{n}m_{k}^{-n/k}=m_{n}\left(\frac{1}{m_{k}^{1/k}}\right)^{n}\leq C_{3}^{n}\frac{m_{k}^{n/k}}{m_{k}^{n/k}}=C_{3}^{n}.
\]
Since $m_{j}^{1/j}$ is non-decreasing, we have

\[
m_{n}m_{k}^{-n/k}=\left(\frac{m_{n}^{1/n}}{m_{k}^{1/k}}\right)^{n}\geq1,
\]
proving the second assertion.
\end{proof}
Fix a compact interval $J$ such that $0\in J$. Given a $z\in\mathbb{C},$
denote by $z^{*}$ its projection on $J$, i.e. $z^{*}\in J$ and
satisfies
\[
\left|z-z^{*}\right|=\text{dist}\left(z,J\right).
\]

Given $\eta>0$ and $Q>0$ consider the set 
\[
V_{k,\eta,Q}=\left\{ z:\text{dist}\left(z,J\right)\leq\frac{\left|z^{*}\right|\eta}{\widehat{L}(k)}\right\} \cup\left\{ z:\text{dist}\left(z,J\right)\leq\frac{1}{Qm_{k}^{1/k}}\right\} .
\]

\begin{figure}[H]

\includegraphics[scale=1.4]{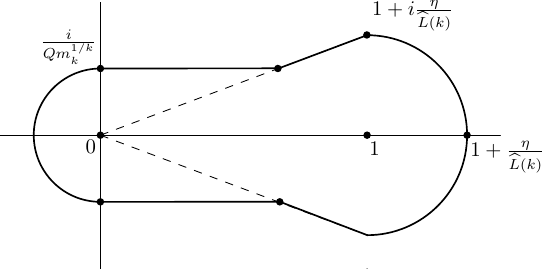}\caption{The set $V_{k,\eta,Q}$ for $J=[0,1]$}

\end{figure}

\begin{claim}
\label{claim:distance between V diff k}For fixed $\eta>0$ and $Q>0,$
we have
\[
\text{dist}\left(\partial V_{k+1,\eta,Q},\partial V_{k,\eta,Q}\right)\geq\min\left\{ \frac{1}{Qm_{k+1}^{1/\left(k+1\right)}}-\frac{1}{Qm_{k}^{1/k}}\,,\,\frac{\eta}{2}\frac{1}{Qm_{k+1}^{1/\left(k+1\right)}}\left(\frac{\widehat{L}(k+1)}{\widehat{L}(k)}-1\right)\right\} ,\quad k>k_{0}.
\]
\end{claim}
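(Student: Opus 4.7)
The plan is to exploit the monotonicity (for $k>k_0$) of both $\widehat L(k)$ and $m_k^{1/k}$; this gives the nesting $V_{k+1,\eta,Q}\subseteq V_{k,\eta,Q}$, and then the desired boundary distance reduces to a pointwise estimate for the ``radial gap'' between the two boundaries, treated separately in the two regimes out of which $V_{k,\eta,Q}$ is built.

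First I rewrite each set as a sublevel set. For $x\geq 0$ put
\[
g_k(x)=\max\!\left(\frac{x\eta}{\widehat L(k)},\ \frac{1}{Qm_k^{1/k}}\right),
\]
so that $V_{k,\eta,Q}=\{z:\operatorname{dist}(z,J)\leq g_k(|z^*|)\}$ and $\partial V_{k,\eta,Q}=\{z:\operatorname{dist}(z,J)=g_k(|z^*|)\}$. Both terms inside the maximum are non-increasing in $k$ once $k>k_0$ (using that $\widehat L$ is eventually non-decreasing and that $m_k^{1/k}$ is eventually non-decreasing by the regularity of $M$), so $g_{k+1}\leq g_k$ pointwise and the nesting follows.

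Second, I convert the boundary-distance estimate into a pointwise gap estimate via a Lipschitz argument. Fix $z\in\partial V_{k+1,\eta,Q}$. Since orthogonal projection onto the closed convex set $J$ is $1$-Lipschitz, $\bigl||w^*|-|z^*|\bigr|\leq|w-z|$, and trivially $|\operatorname{dist}(w,J)-\operatorname{dist}(z,J)|\leq|w-z|$. The function $g_k$ is the maximum of a constant and a linear function of slope $\eta/\widehat L(k)$, and so is Lipschitz with constant $\eta/\widehat L(k)\leq 1$ for $k>k_0$ (choose $k_0$ large). Hence for $|w-z|<\epsilon$,
\[
\operatorname{dist}(w,J)<g_{k+1}(|z^*|)+\epsilon\quad\text{and}\quad g_k(|w^*|)\geq g_k(|z^*|)-\epsilon,
\]
so $w\in V_{k,\eta,Q}$ whenever $\epsilon\leq\tfrac12\bigl(g_k(|z^*|)-g_{k+1}(|z^*|)\bigr)$, and therefore
\[
\operatorname{dist}\!\bigl(z,\partial V_{k,\eta,Q}\bigr)\geq\tfrac12\bigl(g_k(|z^*|)-g_{k+1}(|z^*|)\bigr).
\]

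Third, I bound the gap in the two possible regimes. If $g_{k+1}(|z^*|)=1/(Qm_{k+1}^{1/(k+1)})$ (flat regime), then $g_k(|z^*|)\geq 1/(Qm_k^{1/k})$, so the gap is at least $1/(Qm_k^{1/k})-1/(Qm_{k+1}^{1/(k+1)})$, which is the first term in the claimed minimum. If instead $g_{k+1}(|z^*|)=|z^*|\eta/\widehat L(k+1)$ (linear regime), then the necessary condition $|z^*|\eta/\widehat L(k+1)\geq 1/(Qm_{k+1}^{1/(k+1)})$ forces $|z^*|\geq\widehat L(k+1)/(\eta Qm_{k+1}^{1/(k+1)})$, and using $g_k(|z^*|)\geq|z^*|\eta/\widehat L(k)$ gives
\[
g_k(|z^*|)-g_{k+1}(|z^*|)\;\geq\;|z^*|\eta\!\left(\tfrac{1}{\widehat L(k)}-\tfrac{1}{\widehat L(k+1)}\right)\;\geq\;\frac{1}{Qm_{k+1}^{1/(k+1)}}\!\left(\frac{\widehat L(k+1)}{\widehat L(k)}-1\right),
\]
matching (up to the overall factor from Step~2) the second term of the minimum. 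Taking the minimum over the two regimes yields the asserted lower bound on $\operatorname{dist}(z,\partial V_{k,\eta,Q})$ uniformly in $z\in\partial V_{k+1,\eta,Q}$, hence on the boundary distance.

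The one delicate point is the non-smooth transition between the linear and flat pieces of $g_k$ together with the corner behaviour of the projection $z\mapsto z^*$ at the endpoints of $J$. However, $g_k\circ|\cdot^*|$ remains globally $1$-Lipschitz in $z$ since both factors are, so the estimate in Step~2 is insensitive to corners and the case analysis in Step~3 already extracts the worst of the two regimes; no separate transition argument is required.
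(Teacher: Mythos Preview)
Your argument is correct and follows essentially the same route as the paper: rewrite $V_{k,\eta,Q}$ as the sublevel set $\{\operatorname{dist}(z,J)\le g_k(|z^*|)\}$ with $g_k=\max$ of a linear and a constant piece, then estimate the boundary gap by a case analysis on which branch of the max is active. The paper carries out the linear case by a direct distance-to-line computation, whereas you absorb all geometry into the single Lipschitz estimate of Step~2; this is a bit cleaner and handles the endpoint/corner behaviour of $z\mapsto z^*$ uniformly, at the cost of an extra factor $\tfrac12$ on the ``flat'' term. Since the claim is only ever used to get $\operatorname{dist}(\partial V_{k+1},\partial V_k)\gtrsim e^{-\delta k}$ (Lemma~\ref{lem: d-bar full}), that constant is harmless. (Note also that the first term in the stated minimum should read $\tfrac{1}{Qm_k^{1/k}}-\tfrac{1}{Qm_{k+1}^{1/(k+1)}}$; you have the correct sign.)
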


\begin{proof}
Without loss of generality assume that $J=[0,1]$. Let $z_{0}=x_{0}+iy_{0}\in\partial V_{k+1,\eta,Q}$.
Assume first that $x_{0}\leq1$. If $z_{0}\in\partial\left\{ z:\text{dist}\left(z,J\right)\leq\frac{1}{Qm_{k+1}^{1/\left(k+1\right)}}\right\} ,$then
because $\frac{\text{dist}\left(z,J\right)}{z^{*}}\geq\text{dist}\left(z,J\right)$,
we have 
\[
\text{dist}\left(z_{0},\partial V_{k,\eta,Q}\right)\geq\text{dist}\left(z_{0},\left\{ z:\text{dist}\left(z,J\right)\leq\frac{1}{Qm_{k}^{1/k}}\right\} \right)=\frac{1}{Qm_{k+1}^{1/\left(k+1\right)}}-\frac{1}{Qm_{k}^{1/k}}.
\]
On the other hand, if $z_{0}\in\partial\left\{ z:\frac{\text{dist}\left(z,J\right)}{z^{*}}\leq\frac{\eta}{\widehat{L}(k+1)}\right\} $,
then $\text{dist}\left(z_{0},\partial V_{k,\eta,Q}\right)$ is greater
than the distance from $z_{0}$ to the line $y=\frac{\eta}{\widehat{L}(k)}x,$
therefore
\[
\text{dist}\left(z_{0},\partial V_{k,\eta,Q}\right)\geq\frac{\left|\frac{\eta}{\widehat{L}(k)}x_{0}-y_{0}\right|}{\sqrt{1+\frac{\eta^{2}}{\widehat{L}^{2}(k)}}}.
\]
Observe that $z_{0}\in\partial\left\{ z:\frac{\text{dist}\left(z,J\right)}{z^{*}}\leq\frac{\eta}{\widehat{L}(k+1)}\right\} $
implies that
\[
x_{0}\geq\frac{1}{Qm_{k+1}^{1/\left(k+1\right)}}\frac{\widehat{L}(k+1)}{\eta},\quad\text{and\ensuremath{\quad y_{0}=x_{0}\frac{\eta}{\widehat{L}(k+1)}.}}
\]
Therefore
\[
\frac{\left|\frac{\eta}{\widehat{L}(k)}x_{0}-y_{0}\right|}{\sqrt{1+\frac{\eta^{2}}{\widehat{L}^{2}(k)}}}\geq\frac{\eta}{2}\frac{1}{Qm_{k+1}^{1/\left(k+1\right)}}\left(\frac{\widehat{L}(k+1)}{\widehat{L}(k)}-1\right)
\]
Finally, if $x_{0}>1$, then 
\[
\text{dist}\left(z_{0},J\right)=\max\left\{ \frac{1}{Qm_{k+1}^{1/\left(k+1\right)}},\frac{\eta}{\widehat{L}(k+1)}\right\} 
\]
and therefore
\begin{align*}
\text{dist}\left(z_{0},\partial V_{k,\eta,Q}\right) & \geq\max\left\{ \frac{1}{Qm_{k}^{1/k}},\frac{\eta}{\widehat{L}(k)}\right\} -\max\left\{ \frac{1}{Qm_{k+1}^{1/\left(k+1\right)}},\frac{\eta}{\widehat{L}(k+1)}\right\} \\
& \geq\max\left\{ \frac{1}{Qm_{k}^{1/k}}-\frac{1}{Qm_{k+1}^{1/\left(k+1\right)}}\,,\,\frac{\eta}{\widehat{L}(k)}-\frac{\eta}{\widehat{L}(k+1)}\right\} ,
\end{align*}
proving the claim.
\end{proof}
For $f\in C^{\infty}\left(J\right)$ and $N\in\mathbb{N},$ consider
the function 
\[
P_{N}\left(z\right)=P_{N}\left(z;f\right)=\sum_{j=0}^{N}\frac{f^{(j)}(z^{*})}{j!}(z-z^{*})^{j}.
\]
Clearly $P_{N}\in C^{\infty}\left(\mathbb{C}\right)$ and $P_{N}\vert_{J}=f.$
\begin{claim}
\label{claim:First d-bar estimate}Suppose that $f\in C^{\infty}\left(J\right)$
satisfies the bounds
\[
\left|f^{(n)}(x)\right|\leq C\min\left\{ \frac{\widehat{\gamma}_{n}}{\eta_{0}^{n}x^{n}},M_{n}\right\} ,\forall n\geq1.
\]
Then for any $0<\eta<\eta_{0}$ and any $Q_{1}>1$, there exists $Q>0$
and $\ell,k_{0}\in\mathbb{N}$ so that 
\[
\left|\bar{\partial}P_{\ell k-\ell}(z)\right|+\left|\bar{\partial}P_{\ell k}(z)\right|+\left|\bar{\partial}P_{\ell k+\ell}(z)\right|+\left|P_{\ell k+\ell}(z)-P_{\ell k-\ell}(z)\right|\leq\frac{C}{Q_{1}^{k}},
\]
for any $k>k_{0}$ and $z\in V_{k,\eta,Q}.$
\begin{proof}
Observe that 
\[
\left|\bar{\partial}P_{N}(z)\right|\leq\frac{1}{2}\max\left\{ m_{N+1}\text{dist}^{N}(z,J)\,,\,\frac{\widehat{\gamma}_{N+1}}{\eta_{0}^{N}\left(N+1\right)!}\left|\frac{z}{z^{*}}-1\right|^{N}\right\} .
\]
Therefore, if $z\in V_{k,\eta,Q}$, then
\[
\left|\bar{\partial}P_{N}(z)\right|\leq\frac{1}{2}\max\left\{ m_{N+1}\left(\frac{1}{Qm_{k}^{1/k}}\right)^{N}\,,\,\widehat{\gamma}_{N+1}\left(\frac{\eta}{\eta_{0}}\cdot\frac{1}{\widehat{L}(k)}\right)^{N}\right\} 
\]
By the first assertion of Claim \ref{claim:m_k reg first claim},
we have 
\[
m_{N+1}\leq Cm_{N}N^{C_{2}},\quad\frac{\widehat{\gamma}_{N+1}}{\left(N+1\right)!}\leq C\frac{\widehat{\gamma}_{N}}{N!}N^{C_{2}},
\]
and therefore
\[
\left|\bar{\partial}P_{N}(z)\right|\leq CN^{C_{2}}\cdot\max\left\{ m_{N}\left(\frac{1}{Qm_{k}^{1/k}}\right)^{N}\,,\,\frac{\widehat{\gamma}_{N}}{N!}\left(\frac{\eta}{\eta_{0}}\cdot\frac{1}{\widehat{L}(k)}\right)^{N}\right\} .
\]
In particular, if $N=\ell k-\ell,\,\ell k,\,\ell k+\ell$ and $Q$
is sufficiently large, then Claim \ref{claim:m_k reg 2nd} yields
\[
\left|\bar{\partial}P_{N}(z)\right|\leq C\ell^{C_{2}}\left(k+1\right)^{C_{2}}\cdot\max\left\{ \left(\frac{C_{6}}{Q}\right)^{\left(k-1\right)\ell}\,,\,\left(\frac{\eta}{\eta_{0}}\right)^{\frac{1}{2}\left(k-1\right)\ell}\right\} .
\]
Therefore, we may choose $\ell$ and $Q$ large enough that
\[
\left|\bar{\partial}P_{\ell k-\ell}(z)\right|+\left|\bar{\partial}P_{\ell k}(z)\right|+\left|\bar{\partial}P_{\ell k+\ell}(z)\right|\leq\frac{1}{2}\frac{1}{Q_{1}^{k}}.
\]
Next, we deal with the term $\left|P_{\ell k+\ell}(z)-P_{\ell k-\ell}(z)\right|$.
By the triangle inequality we get
\begin{align*}
\left|P_{\ell k+\ell}(z)-P_{\ell k-\ell}(z)\right| & \leq\sum_{j=\ell k-\ell}^{\ell k+\ell}\left|\frac{f^{(j)}(z^{*})}{j!}\right|\left|z-z^{*}\right|^{j}\\
& \leq\max\left\{ \sum_{j=\ell k-\ell}^{\ell k+\ell}m_{j}Q^{-k}m_{k}^{-j/k}\,,\,\sum_{j=\ell k-\ell}^{\ell k+\ell}\frac{\widehat{\gamma}_{j}}{j!}\left(\frac{1}{\eta\widehat{L}(k)}\right)^{j}\right\} .
\end{align*}
Therefore, by Claim \ref{claim:m_k reg 2nd}, we have 
\[
\left|P_{\ell k+\ell}(z)-P_{\ell k-\ell}(z)\right|\leq\left(2\ell+1\right)\max\left\{ \left(\frac{C_{6}}{Q}\right)^{\left(k-1\right)\ell}\,,\left(\frac{\eta}{\eta_{0}}\right)^{\frac{1}{2}\left(k-1\right)\ell}\right\} \leq\frac{1}{2}\frac{1}{Q_{1}^{k}}.
\]
\end{proof}
\end{claim}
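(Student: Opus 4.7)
The engine is the classical cancellation identity for $\bar{\partial}$ of a truncated Taylor polynomial with sliding center. In the bulk region of $V_{k,\eta,Q}$, where $\mathrm{Re}\,z$ lies in the interior of $J$ and hence $z^{*}=\mathrm{Re}\,z$, one has $\bar{\partial}z^{*}=\tfrac{1}{2}$ and $\bar{\partial}(z-z^{*})=-\tfrac{1}{2}$. Applying the product and chain rules to $P_N(z)=\sum_{j=0}^{N}\tfrac{f^{(j)}(z^{*})}{j!}(z-z^{*})^{j}$ produces two sums that differ only by an index shift, so all but one term cancels, leaving
\[
\bar{\partial}P_N(z)\;=\;\tfrac{1}{2}\,\frac{f^{(N+1)}(z^{*})}{N!}\,(z-z^{*})^{N}.
\]
Outside the bulk, $z^{*}$ is locally constant and $P_N$ is a genuine polynomial in $z$, so $\bar{\partial}P_N\equiv 0$ automatically.

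Next, I use the hypothesis in its two equivalent forms to get two competing pointwise bounds for $|\bar{\partial}P_N(z)|$: one proportional to $m_{N+1}\,\mathrm{dist}(z,J)^{N}$ and the other to $\tfrac{\widehat{\gamma}_{N+1}}{(N+1)!}\bigl(|z-z^{*}|/(\eta_0|z^{*}|)\bigr)^{N}$ times harmless polynomial factors. Since $V_{k,\eta,Q}$ is the \emph{union} of $\{\mathrm{dist}(z,J)\le 1/(Qm_k^{1/k})\}$ and $\{|z-z^{*}|\le|z^{*}|\eta/\widehat{L}(k)\}$, at each $z\in V_{k,\eta,Q}$ at least one of the two distance constraints holds. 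Pairing the first estimate with the first constraint and the second with the second, and absorbing ratios $m_{N+1}/m_N$, $\widehat{\gamma}_{N+1}/((N+1)\widehat{\gamma}_N)$, and the $(N+1)/|z^{*}|$ factor via the polynomial bound of Claim~\ref{claim:m_k reg first claim}(1), I obtain
\[
|\bar{\partial}P_N(z)|\;\lesssim\;N^{C_2}\max\left\{m_N\bigl(Qm_k^{1/k}\bigr)^{-N},\;\frac{\widehat{\gamma}_N}{N!}\bigl(\eta/(\eta_0\widehat{L}(k))\bigr)^{N}\right\}.
\]

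Now specialize to $N\in\{\ell k-\ell,\,\ell k,\,\ell k+\ell\}\subset[\ell(k-1),\ell(k+1)]$. For $k\ge k_0$ large, Claim~\ref{claim:m_k reg first claim}(2) gives $m_N\le C_3^{N}m_k^{N/k}$, collapsing the first maximand to $(C_3/Q)^{N}$, while Claim~\ref{claim:m_k reg 2nd}(1) applied with parameter $\eta_0/\eta>1$ bounds the second maximand by $(\eta/\eta_0)^{N/2}$. Since $N\ge\ell(k-1)$, each of these decays geometrically in $k$ with bases $(C_3/Q)^{\ell}$ and $(\eta/\eta_0)^{\ell/2}$, respectively. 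I would then fix $\ell$ so that $(\eta/\eta_0)^{\ell/2}<1/(2Q_1)$ and only afterwards choose $Q$ so that $(C_3/Q)^{\ell}<1/(2Q_1)$; the extra factor $2^{-k}$ absorbs the polynomial prefactor $N^{C_2}\lesssim(\ell k)^{C_2}$ for $k$ large, yielding $|\bar{\partial}P_N(z)|\le C/Q_1^{k}$ uniformly for each of the three values of $N$.

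The remaining term $P_{\ell k+\ell}(z)-P_{\ell k-\ell}(z)=\sum_{j=\ell k-\ell+1}^{\ell k+\ell}\tfrac{f^{(j)}(z^{*})}{j!}(z-z^{*})^{j}$ is handled termwise by the same max-of-two comparison; each of the $2\ell$ summands satisfies $\max\{(C_3/Q)^{j},(\eta/\eta_0)^{j/2}\}$ with the same base-decay, and the same choice of $\ell,Q$ sends the sum below $C/Q_1^{k}$. Adding the four contributions proves the claim. The main delicacy, as already indicated, is the ordering of quantifiers: one cannot shrink the base in the $\widehat{\gamma}$-regime by enlarging $Q$ (it is governed purely by $\ell$), so $\ell$ must be fixed first; only then can $Q$ be increased to defeat the $m$-regime, and the extra factor of $2$ in each base is what allows the polynomial prefactor $N^{C_2}$ to be absorbed uniformly in $k$.
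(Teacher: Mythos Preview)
Your proposal is correct and follows essentially the same route as the paper: derive the single-term formula $\bar{\partial}P_N=\tfrac12\,\frac{f^{(N+1)}(z^*)}{N!}(z-z^*)^N$, bound it in the two regimes governing $V_{k,\eta,Q}$, and then invoke the regularity estimates (Claims~\ref{claim:m_k reg first claim} and~\ref{claim:m_k reg 2nd}) at $N\in\{\ell(k\pm1),\ell k\}$ to get geometric decay, treating the tail sum $P_{\ell k+\ell}-P_{\ell k-\ell}$ termwise in the same way. Your write-up is in fact more explicit than the paper's on two points the paper glosses over: the stray $1/|z^*|$ factor (which is harmless once one switches to the $M$-bound near the origin), and the order in which $\ell$ and $Q$ must be fixed (since $C_3$ in Claim~\ref{claim:m_k reg first claim}(2) depends on $C_1\asymp\ell$, one must freeze $\ell$ before enlarging $Q$).
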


\begin{lem}
\label{lem: d-bar full}Suppose for some $\delta>0$, $\frac{1}{m_{k}^{1/k}}-\frac{1}{m_{k+1}^{1/\left(k+1\right)}}\gtrsim e^{-\delta k}$
and $\widehat{L}(k+1)-\widehat{L}\left(k\right)\gtrsim e^{-\delta k}.$
Assume further that $f\in C^{\infty}\left(J\right)$ satisfies the
bounds
\[
\left|f^{(n)}(x)\right|\leq C\min\left\{ \frac{\widehat{\gamma}_{n}}{\eta_{0}^{n}x^{n}},M_{n}\right\} ,\forall n\geq1.
\]
Then for any $0<\eta<\eta_{0}$ and any $Q_{2}>1$, there exists $k_{0},\,Q>0$
and $F\in C_{C}^{1}\left(\mathbb{C}\right)$ so that $F\vert_{J}=f$
and 
\[
\left|\bar{\partial}F(z)\right|\leq\frac{1}{Q_{2}^{k}},\quad z\in V_{k,\eta,Q},\,k\geq k_{0}.
\]
\end{lem}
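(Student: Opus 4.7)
My plan is to construct $F$ by the classical Dynkin-style patched-Taylor construction: glue the polynomials $P_{\ell k}(\cdot;f)$ of Claim \ref{claim:First d-bar estimate} across $J$ by a partition of unity subordinate to the nested family $\{V_{k,\eta,Q}\}_{k\ge k_0}$, exploiting the freedom in the parameter $Q_1$ of that claim to beat the inverse-width of the annular shells.

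First I would use the hypotheses $\frac{1}{m_k^{1/k}}-\frac{1}{m_{k+1}^{1/(k+1)}}\gtrsim e^{-\delta k}$ and $\widehat L(k+1)-\widehat L(k)\gtrsim e^{-\delta k}$ together with Claim \ref{claim:distance between V diff k} to deduce $\mathrm{dist}(\partial V_{k+1,\eta,Q},\partial V_{k,\eta,Q})\gtrsim e^{-\delta' k}$, where $\delta'$ exceeds $\delta$ only by a polynomial correction reflecting the sizes of $\widehat L(k)$ and $m_k^{1/k}$. Standard mollification then produces $\chi_k\in C_c^\infty(\mathbb C)$ satisfying $\chi_k\equiv 1$ on $V_{k+1,\eta,Q}$, $\chi_k\equiv 0$ off $V_{k,\eta,Q}$, and $|\bar\partial\chi_k|\lesssim e^{\delta' k}$. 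Setting $\phi_{k_0}:=1-\chi_{k_0}$ and $\phi_k:=\chi_{k-1}-\chi_k$ for $k>k_0$, these form a nonnegative, locally finite partition of unity on $\mathbb C\setminus J$, with each $\phi_k$ supported in the shell $V_{k-1,\eta,Q}\setminus V_{k+1,\eta,Q}$.

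Fixing an outer cutoff $\chi_{\mathrm{out}}\in C_c^\infty(\mathbb C)$ that equals $1$ on $V_{k_0,\eta,Q}$, I would define
\[
F(z):=\begin{cases} f(z), & z\in J,\\ \chi_{\mathrm{out}}(z)\sum_{k\ge k_0}\phi_k(z)\,P_{\ell k}(z;f), & z\notin J.\end{cases}
\]
Since every $\phi_k$ vanishes on $J$ while $P_{\ell k}|_J=f$, a routine limiting argument as $z\to J$ shows $F\in C^1_c(\mathbb C)$.

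For $z\in V_{k,\eta,Q}\setminus V_{k+1,\eta,Q}$ (where $\chi_{\mathrm{out}}\equiv 1$ nearby) at most the three functions $\phi_{k-1},\phi_k,\phi_{k+1}$ are nonzero at $z$. Using $\sum_j\bar\partial\phi_j\equiv 0$ I can write
\[
\bar\partial F(z)=\sum_{j=k-1}^{k+1}\phi_j(z)\,\bar\partial P_{\ell j}(z)+\sum_{j=k-1}^{k+1}\bar\partial\phi_j(z)\bigl(P_{\ell j}(z)-P_{\ell k}(z)\bigr).
\]
Each $|\bar\partial P_{\ell j}|\le C/Q_1^k$ and each $|P_{\ell j}-P_{\ell k}|\le C/Q_1^k$ for $j\in\{k-1,k,k+1\}$ by (the proof of) Claim \ref{claim:First d-bar estimate}, while $|\bar\partial\phi_j|\lesssim e^{\delta' k}$. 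Choosing $Q_1$ so that $Q_1 e^{-\delta'}>Q_2$ gives $|\bar\partial F(z)|\le Q_2^{-k}$. The main obstacle is exactly this balance: the exponentially thin annular shells inflate $|\bar\partial\phi_k|$ to $\sim e^{\delta' k}$, and the lower bounds on the increments of $1/m_k^{1/k}$ and $\widehat L(k)$ imposed in the hypothesis are what allow the decay $Q_1^{-k}$ from Claim \ref{claim:First d-bar estimate} to dominate for any prescribed $Q_2$.
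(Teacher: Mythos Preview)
Your proposal is correct and follows essentially the same approach as the paper: glue the polynomials $P_{\ell k}$ of Claim~\ref{claim:First d-bar estimate} via a partition of unity subordinate to the nested sets $V_{k,\eta,Q}$, with the decay $Q_1^{-k}$ chosen large enough to beat the inverse shell width $\sim e^{\delta' k}$ coming from Claim~\ref{claim:distance between V diff k}. The only cosmetic difference is that the paper builds its $\phi_k$ inductively (with three overlapping bumps in each shell and the constraint $\bar\partial\phi_{k-1}=-\bar\partial\phi_{k+1}$), whereas you take the slightly cleaner telescoping construction $\phi_k=\chi_{k-1}-\chi_k$ (with two overlapping bumps and $\sum\bar\partial\phi_j=0$ automatic); both variants lead to the same estimate in the same way.
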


\begin{proof}
For each $k,$ let $\phi_{k}\in C_{C}^{1}(\mathbb{C})$ be so that
$\phi_{k}\equiv1$ in $V_{k,\eta,Q}\setminus V_{k+1,\eta,Q}$ and
zero outside $V_{k-1,\eta,Q}\setminus V_{k+2,\eta,Q}$. We construct
these functions $\phi_{k}$ inductively in such a way that

\[
\bar{\partial}\phi_{k-1}=-\bar{\partial}\phi_{k+1},\quad\text{in \ensuremath{V_{k,\eta,Q}\setminus V_{k+1,\eta,Q}}}.
\]
By Claim \ref{claim:distance between V diff k} and our assumption,
we may choose these $\left(\phi_{k}\right)$ so that 
\[
\left|\bar{\partial}\phi_{k-1}\right|\lesssim e^{2\delta k},\quad\text{in }V_{k,\eta,Q}\setminus V_{k+1,\eta,Q},\quad\forall\delta>0.
\]
For $1<\eta<\eta_{0}$ and $Q_{1}=e^{3\delta}Q_{2}$, we choose $\ell$
and $Q$ as in Claim \ref{claim:First d-bar estimate} and put
\[
F(z)=\sum_{j=1}^{\infty}\phi_{j}\left(z\right)P_{j\ell}(z).
\]
By definition, if $z\in V_{k,\eta,Q}\setminus V_{k+1,\eta,Q}$, then
\[
F(z)=\phi_{k-1}(r)P_{\ell\left(k-1\right)}(z)+\phi_{k}(r)P_{\ell k}(z)+\phi_{k+1}(r)P_{\ell\left(k+1\right)}(z).
\]
Therefore
\begin{align*}
\left|\bar{\partial}F(z)\right| & \leq\left|\bar{\partial}P_{\ell\left(k-1\right)}(z)\right|+\left|\bar{\partial}P_{\ell k}(z)\right|+\left|\bar{\partial}P_{\ell\left(k+1\right)}(z)\right|+\left|\bar{\partial}\phi_{k-1}\right|\left|P_{\ell\left(k+1\right)}(z)-P_{\ell\left(k-1\right)}(z)\right|\\
& \leq\frac{e^{3\delta k}}{Q_{1}^{k}}=\frac{1}{Q_{2}^{k}}.
\end{align*}
Observing that the above bound is decreasing in $k$ and that 
\[
V_{k,\eta,Q}=\bigcup_{j\geq k}V_{j,\eta,Q}
\]
finishes the proof.
\end{proof}

\subsubsection{Approximation by analytic functions of functions in $B(\widehat{\gamma},M;I)$:
the non-analytic case}
\begin{lem}
Suppose for some $\delta>0,$ $\frac{1}{m_{k}^{1/k}}-\frac{1}{m_{k+1}^{1/\left(k+1\right)}}\gtrsim e^{-\delta k}$
and $\widehat{L}(k+1)-\widehat{L}\left(k\right)\gtrsim e^{-\delta k}.$
Assume further that $f\in B_{1/\eta_{0}}(M,\widehat{\gamma};J)$.
Then for any $Q_{3}>0$ and $0<\eta<\eta_{0}$, there exists $Q>0$
and a sequence $\left(F_{k}\right)_{k\geq k_{0}}$ so that 
\end{lem}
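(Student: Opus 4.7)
The plan is to build the analytic approximants $F_k$ by subtracting from the almost-holomorphic extension $F$ of Lemma~\ref{lem: d-bar full} a Cauchy--Green correction that kills $\bar\partial F$ outside the shrinking neighbourhood $V_{k,\eta,Q}$, then to control the correction on $J$ via the nested geometry of the $V_{k,\eta,Q}$.

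First, I would apply Lemma~\ref{lem: d-bar full} with $Q_2$ chosen to be $Q_3$ multiplied by a fixed large factor (to be pinned down by the Cauchy estimate below), obtaining $F\in C^1_C(\mathbb{C})$ with $F|_J=f$ and the pointwise bound $|\bar\partial F(z)|\leq Q_2^{-k}$ on $V_{k,\eta,Q}$ for all $k\geq k_0$. For each such $k$, I would then define
\[
F_k(z):=F(z)-\frac{1}{\pi}\int_{\mathbb{C}\setminus V_{k,\eta,Q}}\frac{\bar\partial F(w)}{w-z}\,dA(w).
\]
Since $\bar\partial F$ has compact support, $F_k$ is well defined, and the Cauchy--Green formula yields $\bar\partial F_k\equiv 0$ on $\mathbb{C}\setminus V_{k,\eta,Q}$; that is, $F_k$ is holomorphic on the complement of this shrinking neighbourhood of $J$.

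The estimate of $F_k(z)-f(z)=F_k(z)-F(z)$ for $z\in J$ I would obtain by decomposing the integration domain $\mathbb{C}\setminus V_{k,\eta,Q}$ into the shells $V_{j,\eta,Q}\setminus V_{j+1,\eta,Q}$ for $k_0\leq j<k$, plus a tail lying outside $V_{k_0,\eta,Q}$ where $|\bar\partial F|$ is uniformly bounded thanks to the compact support of $F$. On each shell I would use $|\bar\partial F(w)|\leq Q_2^{-j}$, while the area of the shell and the distance from $z\in J$ to its support are controlled by Claim~\ref{claim:distance between V diff k} together with the hypotheses $\tfrac{1}{m_k^{1/k}}-\tfrac{1}{m_{k+1}^{1/(k+1)}}\gtrsim e^{-\delta k}$ and $\widehat L(k+1)-\widehat L(k)\gtrsim e^{-\delta k}$. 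Summing the resulting geometric series in $j$ gives $|F_k(z)-f(z)|\lesssim Q_3^{-k}$ on $J$, once the ratio $Q_2/Q_3$ was taken large enough.

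The main obstacle will be juggling the two distinct geometric scales that comprise $V_{k,\eta,Q}$: the $m_k^{1/k}$ scale (a small symmetric strip around the compact piece of $J$) and the angular $\widehat L(k)$ scale (a sectorial opening away from the origin). The distance bound in Claim~\ref{claim:distance between V diff k} splits into these two regimes, so the Cauchy kernel $|w-z|^{-1}$ has to be estimated separately in each, and the contributions from the interface where they meet have to be shown not to dominate. The two exponential-decay assumptions in the hypothesis are precisely what prevent the polynomial losses coming from the kernel singularity and from $\sup_{z\in J,\,w\in V_j\setminus V_{j+1}}|w-z|^{-1}$ from defeating the geometric factor $Q_2^{-j}$, so the target decay $Q_3^{-k}$ survives with constants depending only on $\eta,\eta_0,\delta,Q_3$.
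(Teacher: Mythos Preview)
Your construction has the analyticity in the wrong region. You want $F_k$ holomorphic \emph{inside} $V_{k,\eta,Q}$ (a shrinking neighbourhood of $J$), because these approximants are fed into the contour-integral estimates for $\mathcal{B}_\gamma$ in Lemma~\ref{lem:Main estimate genrlize borel}, which require analyticity on a domain containing $J$. Your $F_k$, by contrast, kills $\bar\partial F$ on $\mathbb{C}\setminus V_{k,\eta,Q}$ and is therefore holomorphic only \emph{outside} the neighbourhood, which is useless for that purpose. The fix is to swap the domain of integration: subtract the Cauchy--Green correction over $V_{k,\eta,Q}$ rather than over its complement. By Cauchy--Pompeiu this collapses to
\[
F_k(z)=-\frac{1}{\pi}\int_{\mathbb{C}\setminus V_{k-1,\eta,Q}}\frac{\bar\partial F(w)}{w-z}\,dA(w),
\]
which is exactly the paper's definition and is manifestly holomorphic for $z\in V_{k-1,\eta,Q}\supset V_{k,\eta,Q}$.

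With the correct definition the rest becomes much shorter than what you outline. The quantity the lemma actually asks you to bound is $|F_{k-1}-F_k|$ on $V_{k,\eta,Q}$, not $|F_k-f|$ on $J$; the former is an integral over a \emph{single} annular shell $V_{k-2,\eta,Q}\setminus V_{k-1,\eta,Q}$, so no geometric summation over $j$ is needed. On that shell $|\bar\partial F|\le Q_2^{-k}$, and the only loss from $|w-z|^{-1}$ is controlled directly by Claim~\ref{claim:distance between V diff k}, which under the hypotheses gives $\mathrm{dist}(\partial V_k,\partial V_{k-1})\gtrsim e^{-C\delta k}$; choosing $Q_2$ a fixed power of $e^{\delta}$ above $Q_3$ absorbs this. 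The $C^\infty(J)$ convergence of $F_k$ to $f$ then follows automatically from $\bigcap_k V_{k,\eta,Q}=J$ and differentiation under the integral sign, rather than from a separate uniform estimate.
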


\begin{enumerate}
\item $F_{k}\in\text{Hol}\left(V_{k,\eta,Q}\right)\cup C\left(\overline{V_{k,\eta,Q}}\right),$
with $\max_{V_{k,\eta,Q}}\left|F_{k-1}-F_{k}\right|\leq\frac{1}{Q_{3}^{k}}$.
\item $F_{k}\to f$ in the $C^{\infty}\left(J\right)$ topology, i.e., $F_{k}^{(n)}\to f^{(n)}$
uniformly in $J$ for any $n\geq0.$
\end{enumerate}
\begin{proof}
We first assume that $f\in C_{1/\eta_{0}}(M,\widehat{\gamma};J).$
Replacing $M_{n}$ with $C^{n}M_{n}$, we may assume that 
\[
\left|f^{(n)}(x)\right|\leq C\min\left\{ \frac{\widehat{\gamma}_{n}}{\eta_{0}^{n}x^{n}},M_{n}\right\} ,\forall n\geq1.
\]
For $1<\eta<\eta_{0}$ and $Q_{2}=2Q_{3},$ let $F$ be the function
constructed in Lemma \ref{lem: d-bar full}. By the Cauchy--Pompeiu
formula, 
\[
F(z)=-\frac{1}{\pi}\int_{\mathbb{C}}\bar{\partial}F(w)\frac{1}{w-z}d\mu_{2}(w)
\]
where $\mu_{2}$ is the planar Lebesgue measure in $\mathbb{C}$.
Given $k\geq k_{0}$, consider the functions 
\[
F_{k}(z)=\int_{\mathbb{C\setminus}V_{k-1,\eta_{0},Q}}\bar{\partial}F(w)\frac{1}{w-z}d\mu_{2}(w)
\]
Since $\cap_{k}V_{k-1,\eta_{0},Q}=J$, we have $F_{k}\to F=f$ in
the $C^{\infty}\left(J\right)$ topology. Moreover, for $k\geq k_{0},$
the function $F_{k}$ is analytic in $\overline{V_{k,\eta_{0},Q}}\subset V_{k-1,\eta_{0},Q}$
. Writing 
\[
F_{k-1}-F_{k}=\int_{V_{k-2,\eta_{0},Q}\mathbb{\setminus}V_{k-1,\eta_{0},Q}}\bar{\partial}F(w)\frac{1}{w-z}d\mu_{2}(w)
\]
and recalling that

\[
\left|\bar{\partial}F\right|\leq\frac{1}{Q_{2}^{k}},\quad\text{in}\,V_{k-2,\eta_{0},Q},
\]
we have
\[
\left|F_{k-1}-F_{k}\right|\leq\frac{1}{Q_{2}^{k}}\frac{1}{\text{dist\ensuremath{\left(\partial V_{k,\eta_{0},Q},\partial V_{k-1,\eta_{0},Q}\right)}}}\quad\text{in}\,\,V_{k,\eta_{0},Q}.
\]
Thus, by Claim \ref{claim:distance between V diff k}, we have
\[
\left|F_{k-1}-F_{k}\right|\leq\frac{e^{3\delta k}}{Q_{2}^{k}}=\frac{1}{Q_{3}^{k}}\quad\text{in}\,\,V_{k,\eta_{0},Q},
\]
proving the claim for $f\in C_{1/\eta_{0}}(M,\widehat{\gamma};J)$.
By Lemma \ref{lem:non-homg and B}, for $\eta<\eta_{1}<\eta_{0}$,
we have $B_{1/\eta_{0}}(M,\widehat{\gamma};J)\subseteq C_{1/\eta_{1}}(M,\widehat{\gamma};J)$
and the claim follows. 
\end{proof}
For a given $\eta>1$ and $Q>0,$ denote by $U_{k,\eta,Q}$ a $\frac{1}{Qm_{k}^{1/k}}$
neighborhood of the two sectors 
\[
\left\{ z:\text{arg}(z)\leq\frac{\eta}{\widehat{L}(k)},\,\quad0<\text{Re}\left(z\right)\in J\right\} \bigcup\left\{ z:\text{arg}(-z)\leq\frac{\eta}{\widehat{L}(k)},\,\quad0>\text{Re}\left(z\right)\in J\right\} .
\]
\begin{figure}[H]
\includegraphics[scale=1.4]{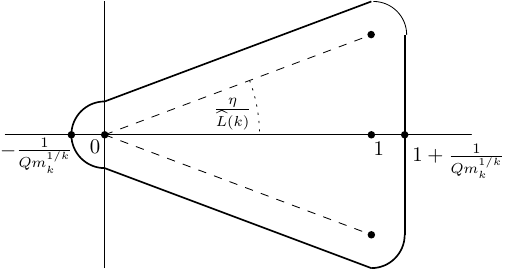}\label{fig: The set U}\caption{The set $U_{k,\eta,Q}$ for $J=[0,1]$}
\end{figure}
Observe that for any $\eta_{1}>\eta>1$ and $Q_{1}>1$, there exist
$Q>1$ so that 
\[
U_{k,\eta,Q}\subset V_{k,\eta_{1},Q_{1}},\quad\forall k\geq k_{0}.
\]
Thus, the previous Lemma immediately give us the following one:
\begin{lem}
\label{lem: Aprox by analytic function 2nd}Suppose for some $\delta>0$,
$\frac{1}{m_{k}^{1/k}}-\frac{1}{m_{k+1}^{1/\left(k+1\right)}}\gtrsim e^{-\delta k}$
, $\widehat{L}(k+1)-\widehat{L}\left(k\right)\gtrsim e^{-\delta k},$
and that $f\in B_{1/\eta_{0}}(M,\widehat{\gamma};J)$. Then for any
$Q_{3}>0$ and $0<\eta<\eta_{0}$, there exists $Q>0$ and a sequence
$\left(F_{k}\right)_{k\geq k_{0}}$ so that 
\end{lem}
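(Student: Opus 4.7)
The plan is to derive this lemma as an immediate corollary of its predecessor by transporting the approximants from the larger sets $V_{k,\eta_{1},Q_{1}}$ to the sector-tubes $U_{k,\eta,Q}$. Given the statement's parameters $\eta<\eta_{0}$ and $Q_{3}>0$, I would first fix an intermediate value $\eta_{1}$ with $\max(\eta,1)<\eta_{1}<\eta_{0}$, and then apply the previous lemma with parameters $(\eta_{1},Q_{3})$. This produces some $Q_{1}>0$, an index $k_{0}$, and a sequence $(F_{k})_{k\ge k_{0}}$ with $F_{k}\in\mathrm{Hol}(V_{k,\eta_{1},Q_{1}})\cap C(\overline{V_{k,\eta_{1},Q_{1}}})$, satisfying
\[
\max_{V_{k,\eta_{1},Q_{1}}}\bigl|F_{k-1}-F_{k}\bigr|\leq Q_{3}^{-k},
\]
together with $F_{k}\to f$ in the $C^{\infty}(J)$ topology.

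Next I would invoke the geometric inclusion highlighted immediately before the statement: with $\eta_{1}$ and this particular $Q_{1}$ fixed, one may choose $Q>0$ so that $U_{k,\eta,Q}\subset V_{k,\eta_{1},Q_{1}}$ for every $k\geq k_{0}$. Restricting each $F_{k}$ to $U_{k,\eta,Q}$ then preserves analyticity on $U_{k,\eta,Q}$, continuity on its closure, and the successive-difference estimate
\[
\max_{U_{k,\eta,Q}}\bigl|F_{k-1}-F_{k}\bigr|\leq Q_{3}^{-k}.
\]
The $C^{\infty}(J)$ convergence is automatic, since $J\subset U_{k,\eta,Q}\subset V_{k,\eta_{1},Q_{1}}$ and the required derivative bounds on $J$ have already been established by the previous lemma.

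Essentially nothing more is needed: the substantive analytic work, namely the $\bar{\partial}$-construction together with the Cauchy--Pompeiu surgery, was done in Lemma \ref{lem: d-bar full} and the lemma following it. The only matter requiring care is the order of quantifiers: $Q_{1}$ must be produced by invoking the previous lemma \emph{first}, and only then is $Q$ extracted from the geometric inclusion, so the final $Q$ depends on $\eta$, $\eta_{0}$, $Q_{3}$, and on the data of $M$ and $\widehat{\gamma}$. I do not anticipate any genuine obstacle beyond this bookkeeping, since the sector-tube $U_{k,\eta,Q}$ is constructed precisely so as to fit inside the sector-plus-disk set $V_{k,\eta_{1},Q_{1}}$ when $\eta<\eta_{1}$ and $Q$ is sufficiently large relative to $Q_{1}$.
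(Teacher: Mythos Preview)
Your proposal is correct and matches the paper's approach exactly: the paper states the geometric inclusion $U_{k,\eta,Q}\subset V_{k,\eta_{1},Q_{1}}$ and then simply says ``Thus, the previous Lemma immediately gives us the following one,'' with no further argument. Your write-up is, if anything, more careful about the order of quantifiers than the paper itself.
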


\begin{enumerate}
\item $F_{k}\in\text{Hol}\left(U_{k,\eta,Q}\right)\cup C\left(\overline{U_{k,\eta,Q}}\right),$
with $\max_{U_{k,\eta,Q}}\left|F_{k-1}-F_{k}\right|\leq\frac{1}{Q_{3}^{k}}$.
\item $F_{k}\to f$ in the $C^{\infty}\left(J\right)$ topology, i.e., $F_{k}^{(n)}\to f^{(n)}$
uniformly in $J$ for any $n\geq0.$
\end{enumerate}
\begin{rem}
In what follows, we will apply Lemma \ref{lem: Aprox by analytic function 2nd}
to functions in $B_{1/\eta_{0}}(M\cdot\gamma,\widehat{\gamma};0_{+})$,
with $M$ regular and $\gamma$ admissible. The conditions
\[
\widehat{L}(k+1)-\widehat{L}\left(k\right)\gtrsim e^{-\delta k},\quad\frac{1}{\gamma_{k}^{1/k}m_{k}^{1/k}}-\frac{1}{\gamma_{k+1}^{1/\left(k+1\right)}m_{k+1}^{1/\left(k+1\right)}}\gtrsim e^{-\delta k}
\]
follow from our admissibility assumption. Indeed, the former left-hand side inequality follows from Admissibility assumption (D), while
the right hand side inequality follows from
\begin{align*}
\frac{1}{\gamma_{k}^{1/k}m_{k}^{1/k}}-\frac{1}{\gamma_{k+1}^{1/\left(k+1\right)}m_{k+1}^{1/\left(k+1\right)}} & \geq\frac{1}{m_{k+1}^{1/\left(k+1\right)}}\left(\frac{1}{\gamma_{k}^{1/k}}-\frac{1}{\gamma_{k+1}^{1/\left(k+1\right)}}\right)=\frac{1}{m_{k+1}^{1/\left(k+1\right)}}\int_{k}^{k+1}\frac{1}{L(x)\cdot\widehat{L}(x)}\cdot\frac{dx}{x}\\
& \gtrsim\frac{1}{m_{k+1}^{1/\left(k+1\right)}}\frac{1}{L(k+1)\widehat{L}(k+1)}\frac{1}{k}\gtrsim\frac{1}{m_{k+1}^{1/\left(k+1\right)}}k^{-C}.
\end{align*}
By the first assertion of Claim \ref{claim:m_k reg first claim},
the right hand side is $\gtrsim k^{-C_{1}}$ for some constant $C_{1}>0.$ 
\end{rem}

\subsection{Almost holomorphic extensions of functions in $B(M,\widehat{\gamma};I)$:
the analytic case}

Throughout this section, we fix an admissible $\gamma$ with an analytic
$\widehat{\gamma}$ and a regular sequence $M.$ Recall that 
\[
\Omega_{\eta}\left(\gamma\right):=\left\{ z:\sup_{t>0}E(t\eta)\left|K\left(\frac{t}{z}\right)\right|<\infty\right\} ,
\]
and
\[
D\left(M,\gamma;[0,a)\right)=\bigcup_{\eta>a}C_{\infty}^{M}\left(\text{\ensuremath{\Omega_{\eta}}}\mathbb{\left(\gamma\right)}\right).
\]
We will use the following Theorem of Dynkin \cite{dyn1976pseudoanalytic}.
\begin{thm*}[Dynkin]
Let $\mathbb{E}\subset\mathbb{C}$ be a perfect compact set. If $f\in C_{\infty}^{M}\left(\mathbb{E}\right),$
then there is $C>0$ and a function $F\in C_{C}^{1}\left(\mathbb{C}\right)$
so that $F\vert_{\mathbb{E}}=f$ so that 
\[
\left|\bar{\partial}F(z)\right|\leq C\cdot h\left(C\cdot\text{dist}\left(z,\mathbb{E}\right)\right),z\in\mathbb{C},
\]
where $h(r)=\inf_{n\geq0}m_{n}r^{n}$.
\end{thm*}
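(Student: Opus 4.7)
The plan is to follow Dynkin's original Whitney-type construction, gluing Taylor polynomials of optimally chosen degree together by a smooth partition of unity. After replacing $(m_n)$ by its log-convex minorant (which leaves $h$ unchanged), I would define $N(r)$ to be an integer minimizing $m_n r^n$, so that $h(r)\asymp m_{N(r)}r^{N(r)}$ and $N(r)\to\infty$ as $r\to 0$.

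First I set up the geometry: take a Whitney decomposition of $\mathbb{C}\setminus\mathbb{E}$ into dyadic squares $\{Q_i\}$ with side lengths $\ell_i\asymp\operatorname{dist}(Q_i,\mathbb{E})$, together with the standard subordinate smooth partition of unity $\{\psi_i\}$ on slight dilates of the cubes, satisfying $|\bar\partial\psi_i|\lesssim\ell_i^{-1}$. For each $i$, pick $p_i\in\mathbb{E}$ with $\operatorname{dist}(p_i,Q_i)\lesssim\ell_i$ and set $N_i:=N(\ell_i)$. Then define
\[
F(z):=\begin{cases} f(z), & z\in\mathbb{E},\\ \chi(z)\sum_i\psi_i(z)\,P_{N_i}(z;p_i), & z\in\mathbb{C}\setminus\mathbb{E},\end{cases}
\]
where $P_N(z;p):=\sum_{j=0}^{N}\frac{f^{(j)}(p)}{j!}(z-p)^j$ and $\chi$ is a smooth cutoff equal to $1$ on a neighborhood of $\mathbb{E}$ with compact support.

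The key observation is that each $P_{N_i}(\cdot;p_i)$ is a holomorphic polynomial in $z$, so $\bar\partial P_{N_i}\equiv 0$, and on $\{\chi\equiv 1\}$,
\[
\bar\partial F(z)=\sum_i\bar\partial\psi_i(z)\,P_{N_i}(z;p_i)=\sum_i\bar\partial\psi_i(z)\bigl(P_{N_i}(z;p_i)-P_{N_{i_0}}(z;p_{i_0})\bigr),
\]
where the second equality uses $\sum_i\bar\partial\psi_i=0$ and $i_0$ is any fixed index with $z\in Q_{i_0}$. For such $z$, only finitely many indices contribute, all with $\ell_i\asymp d:=\operatorname{dist}(z,\mathbb{E})$ and $N_i\asymp N(d)$. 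Combined with $|\bar\partial\psi_i|\lesssim d^{-1}$, the key Whitney-type estimate
\[
\bigl|P_{N_i}(z;p_i)-P_{N_{i_0}}(z;p_{i_0})\bigr|\lesssim d\cdot h(Cd)
\]
then yields $|\bar\partial F(z)|\lesssim h(C\operatorname{dist}(z,\mathbb{E}))$. The term coming from $\bar\partial\chi$ is supported where $d$ is bounded below, hence absorbed by $h(C\cdot\mathrm{const})$. Continuity of $F$ across $\mathbb{E}$ and the identity $F|_{\mathbb{E}}=f$ follow from the fact that $P_{N_i}(z;p_i)\to f(z_0)$ as $z\to z_0\in\mathbb{E}$, which is a consequence of $|f^{(j)}|\leq C^{j+1}M_j$ on $\mathbb{E}$.

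The main obstacle is the displayed bound on the difference of two Taylor polynomials. The natural approach is to split it as $P_{N_i}(z;p_i)-P_{N_i}(z;p_{i_0})$ plus $P_{N_i}(z;p_{i_0})-P_{N_{i_0}}(z;p_{i_0})$. The second piece is a finite sum of terms $\tfrac{f^{(j)}(p_{i_0})}{j!}(z-p_{i_0})^j$ for $j$ between $N_{i_0}$ and $N_i$, each bounded by $m_j(Cd)^j$; the optimality of $N_i\approx N(d)$ together with log-convexity of $m$ forces these terms to be dominated by $d\cdot h(Cd)$ up to a geometric factor. The first piece is handled by re-expanding the values $f^{(j)}(p_i)$ in Taylor series about $p_{i_0}$ using the jet data on $\mathbb{E}$ and the bound $|f^{(k)}|\leq C^{k+1}M_k$; the resulting rearrangement again reduces to evaluating $h$ at $Cd$. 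This balancing of indices (and the use of log-convexity to pass from pointwise bounds $m_j(Cd)^j$ to $h(Cd)$) is the only substantive analytic step; the rest is bookkeeping.
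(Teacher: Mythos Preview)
The paper does not prove this statement; it is quoted verbatim from Dynkin \cite{dyn1976pseudoanalytic} and used as a black box to derive the approximation Lemma~\ref{lem:aprox by analytic function-the analytic case}. So there is no proof in the paper to compare against, and your sketch is in fact Dynkin's original argument.

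One point worth tightening: in your treatment of $P_{N_i}(z;p_i)-P_{N_i}(z;p_{i_0})$ you ``re-expand $f^{(j)}(p_i)$ in Taylor series about $p_{i_0}$''. For a general perfect compact $\mathbb{E}$ the segment $[p_i,p_{i_0}]$ need not lie in $\mathbb{E}$, so you cannot invoke the integral form of the remainder using only the pointwise bounds $|f^{(k)}|\le C^{k+1}M_k$ on $\mathbb{E}$. What you actually need is the Whitney remainder estimate
\[
\Bigl|f^{(j)}(p_i)-\sum_{\ell\le N_i-j}\tfrac{f^{(j+\ell)}(p_{i_0})}{\ell!}(p_i-p_{i_0})^{\ell}\Bigr|
\ \lesssim\ m_{N_i+1}\,|p_i-p_{i_0}|^{\,N_i+1-j},
\]
uniformly in $j\le N_i$. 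This is part of the \emph{definition} of $f\in C_\infty^{M}(\mathbb{E})$ in Dynkin's setting (Whitney jets of class $C^M$), and in the paper's convention $C^{\infty}(\mathbb{E})$ means restriction of a smooth function on $\mathbb{C}$, which carries the same information. Make that hypothesis explicit; otherwise the ``bookkeeping'' step you describe has no input to work with.
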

\begin{lem}
\label{lem:aprox by analytic function-the analytic case}Let $f\in C_{\infty}^{M}\left(\mathbb{E}\right)$.
Then for any $Q_{1}>0$ there is $Q>0$ and a sequence $\left(F_{k}\right)_{k\geq k_{0}}$
so that 
\end{lem}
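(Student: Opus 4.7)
The plan is to mirror the non-analytic construction but replace the delicate hand-built almost-holomorphic extension (obtained there through partitions of unity glued to truncated Taylor polynomials) with the one supplied directly by Dynkin's theorem quoted just above. Since the sequence $M$ is regular, the function $h(r)=\inf_n m_n r^n$ decays faster than any power of $r$, which will give us geometric control on $\bar\partial F$ in shrinking neighborhoods of $\mathbb{E}$.

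First I would apply Dynkin's theorem to obtain $F\in C^1_c(\mathbb{C})$ with $F\vert_{\mathbb{E}}=f$ and $|\bar\partial F(z)|\le C\,h\bigl(C\,\mathrm{dist}(z,\mathbb{E})\bigr)$. Next I would set up a nested family of tubes $\mathbb{E}_k=\{z:\mathrm{dist}(z,\mathbb{E})\le r_k\}$ with $r_k=1/(Qm_k^{1/k})$, where $Q$ is a large constant to be fixed at the end. The key point is that, by taking $n=k$ in the definition of $h$,
\[
h(Cr_k)\le m_k(Cr_k)^k=(C/Q)^k,
\]
so $|\bar\partial F|\lesssim (C/Q)^k$ throughout $\mathbb{E}_k$. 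Choosing $Q$ so that $C/Q<1/(Q_1 e^\delta)$ for a small slack $\delta$ absorbs polynomial losses that appear later.

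I would then define
\[
F_k(z)=-\frac{1}{\pi}\int_{\mathbb{C}\setminus\mathbb{E}_{k-1}}\bar\partial F(w)\,\frac{d\mu_2(w)}{w-z},
\]
which is automatically holomorphic on the interior of $\mathbb{E}_{k-1}$, hence on $\overline{\mathbb{E}_k}$. The Cauchy--Pompeiu formula gives $F-F_k=-\tfrac{1}{\pi}\int_{\mathbb{E}_{k-1}}\bar\partial F(w)(w-z)^{-1}d\mu_2(w)$, and differentiating under the integral together with the $h$-estimate shows that $F_k\to f$ in the $C^\infty(\mathbb{E})$ topology. For the telescoping bound,
\[
F_k(z)-F_{k-1}(z)=\frac{1}{\pi}\int_{\mathbb{E}_{k-2}\setminus\mathbb{E}_{k-1}}\bar\partial F(w)\,\frac{d\mu_2(w)}{w-z},\qquad z\in\mathbb{E}_k,
\]
and the integrand is controlled by $(C/Q)^{k-2}$ while the denominator is bounded below by $r_{k-1}-r_k$.

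The main obstacle is the lower bound on the gap $r_{k-1}-r_k$. By Claim 5.? (the regularity estimates on $m_k^{1/k}$ used in the non-analytic case), this gap is at least $k^{-C}/m_k^{1/k}$, producing only a polynomial-in-$k$ loss that is absorbed by the $(C/Q)^{k-2}$ decay once $Q$ is chosen sufficiently large relative to $Q_1$. Combining these pieces yields, for any $z\in\mathbb{E}_k$,
\[
|F_k(z)-F_{k-1}(z)|\le \frac{1}{Q_1^{k}},
\]
which together with $F_k\to f$ in $C^\infty(\mathbb{E})$ and the holomorphy of $F_k$ on $\overline{\mathbb{E}_k}$ gives the conclusion parallel to Lemma~\ref{lem: Aprox by analytic function 2nd}.
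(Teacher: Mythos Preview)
Your skeleton---Dynkin's extension, then $F_k$ defined by integrating $\bar\partial F/(w-z)$ over the complement of shrinking tubes, with a telescoping bound---is exactly the paper's approach. Two details, however, are off.

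First, the gap estimate you appeal to is not a consequence of regularity. Take $M_n=n!\,c^n$, so $m_k^{1/k}\equiv c$: then all your tubes $\mathbb{E}_k$ coincide, $r_{k-1}-r_k=0$, and---more seriously---$\bigcap_k\mathbb{E}_k\neq\mathbb{E}$, so $F_k\not\to f$. The non-analytic lemmas carry the gap hypothesis $\tfrac{1}{m_k^{1/k}}-\tfrac{1}{m_{k+1}^{1/(k+1)}}\gtrsim e^{-\delta k}$ as an explicit assumption precisely because it does not follow from regularity alone. The paper sidesteps this entirely by taking the tube radius to be $1/(Q^k m_k^{1/k})$ rather than $1/(Qm_k^{1/k})$; the extra factor $Q^k$ forces the tubes to shrink to $\mathbb{E}$ regardless of whether $m_k^{1/k}\to\infty$, and the $h$-bound still gives $|\bar\partial F|\le C^{k-1}/Q^{(k-2)^2}$ on $\widetilde U_{k-2}$, which is ample.

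Second, the gap is not needed for the telescoping estimate at all. Rather than bounding $|w-z|$ from below, one just uses that $w\mapsto |w-z|^{-1}$ lies in $L^1_{\mathrm{loc}}(\mathbb{C})$: since $F$ has compact support, $\int_{\widetilde U_{k-2}}|w-z|^{-1}\,d\mu_2(w)$ is bounded by a constant depending only on $\mathbb{E}$, and combining this with the $\bar\partial F$ bound gives $|F_{k-1}-F_k|\le Q_1^{-k}$ directly. This is what the paper does (implicitly), and it makes the whole ``main obstacle'' paragraph of your proposal unnecessary.
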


\begin{enumerate}
\item the functions $F_{k}$ are analytic in $\left\{ z:\text{dist}\left(z,\mathbb{E}\right)\leq\frac{1}{Q^{k}m_{k}^{1/k}}\right\} $
and satisfy therein $\left|F_{k-1}-F_{k}\right|\leq\frac{1}{Q_{1}^{k}}$
for any $k\geq k_{0};$
\item $F_{k}\to f$ in the $C^{\infty}\left(\mathbb{E}\right)$ topology,
i.e., $F_{k}^{(n)}\to f^{(n)}$ uniformly in $J$ for any $n\geq0.$
\end{enumerate}
\begin{proof}
Let $Q_{1}>0$ and let $F$ be a function as in Dynkin's Theorem.
By the Cauchy--Pompeiu formula, 
\[
F(z)=-\frac{1}{\pi}\int_{\mathbb{C}}\bar{\partial}F(w)\frac{1}{w-z}d\mu_{2}(w)
\]
where $\mu_{2}$ is the planar Lebesgue measure in $\mathbb{C}$.
For $Q>0$ that will be chosen later, put
\[
\widetilde{U}_{k}:=\left\{ z:\text{dist}\left(z,\mathbb{E}\right)\leq\frac{1}{Q^{k}m_{k}^{1/k}}\right\} 
\]
and
\[
F_{k}(z)=\int_{\mathbb{C\setminus}\widetilde{U}_{k-1}}\bar{\partial}F(w)\frac{1}{w-z}d\mu_{2}(w).
\]
Since $\cap_{k}\widetilde{U}_{k}=\mathbb{E}$, we have $F_{k}\to F=f$
in the $C^{\infty}\left(\mathbb{E}\right)$ topology.

If $w\in\widetilde{U}_{k-2}$, then 
\[
\left|\bar{\partial}F(w)\right|\leq C\cdot\inf_{n}m_{n}\left(\frac{C}{Q^{k-2}m_{k-2}^{1/\left(k-2\right)}}\right)^{n}\stackrel{\left(n=k-2\right)}{=}\frac{C^{k-1}}{Q^{k-2}}.
\]
Therefore we can choose $Q$ large enough so that 
\[
\left|\bar{\partial}F(w)\right|\leq\frac{1}{Q_{1}^{k}},\quad w\in\widetilde{U}_{k-2},\,k\geq k_{0}.
\]
Writing
\[
F_{k-1}-F_{k}=\int_{\widetilde{U}_{k-2}\setminus\widetilde{U}_{k-1}}\bar{\partial}F(w)\frac{1}{w-z}d\mu_{2}(w),
\]
we obtain
\[
\left|F_{k-1}-F_{k}\right|\leq\frac{1}{Q_{2}^{k}}\quad\text{in\,}\,\widetilde{U}_{k}.
\]
\end{proof}

\subsection{The classes $B^{\pm}(\widehat{\gamma},M;J)$\label{subsec:The-classes plus minus}}

Throughout this section we fix an admissible $\gamma$ and a regular
sequence $M.$

\subsubsection{Almost holomorphic extension of functions in $B^{\pm}(\widehat{\gamma},M;0)$ }

For $\mu>0$ and a compact $J$, put $S_{\mu}^{\pm}=\left\{ z:\text{\ensuremath{\pm}Im}z>0,\,|z|<\mu\right\} .$
Let $f_{+}\in C^{\infty}\left(\overline{S_{2\mu}^{+}}\right)\cap\text{Hol}\left(S_{2\mu}^{+}\right)$
be such that 
\[
\left|f_{+}^{(n)}(x)\right|\leq C\min\left\{ \frac{\widehat{\gamma}_{n}}{\eta_{0}^{n}x^{n}}\,,\,\,M_{n}\right\} ,\qquad\forall n\geq1,\,\,x\in[-2\mu,2\mu].
\]
Put

\[
P_{N}^{+}\left(z\right)=P_{N}^{+}\left(z;f\right)=\sum_{j=0}^{N}\frac{f^{(j)}(z^{*})}{j!}(z-z_{\mu}^{*})^{j},
\]
where $z_{\mu}^{*}$ is the projection of $z$ onto $\overline{S_{\mu}^{+}}$.
Observe that as
\[
\left|\bar{\partial}P_{N}^{+}(z)\right|\leq\frac{1}{2}\max\left\{ m_{N+1}\text{dist}^{N}(z,J)\,,\,\frac{\widehat{\gamma}_{N+1}}{\eta_{0}^{N}\left(N+1\right)!}\left|\frac{z}{z^{*}}-1\right|^{N}\right\} ,\quad\forall z\in\mathbb{C}.
\]
and that $\bar{\partial}P_{N}^{+}=0$ in $\overline{S_{\mu}^{+}}.$ Thus,
repeating all the steps in the proof of Lemma \ref{lem: Aprox by analytic function 2nd},
we obtain the following analogue.
\begin{lem}
\label{lem:analytic aprox for =00005Cplus case}Suppose for some $\delta>0$,
$\frac{1}{m_{k}^{1/k}}-\frac{1}{m_{k+1}^{1/\left(k+1\right)}}\gtrsim e^{-\delta k}$
, $\widehat{L}(k+1)-\widehat{L}\left(k\right)\gtrsim e^{-\delta k},$
and that $f\in B_{1/\eta_{0}}^{+}(M,\widehat{\gamma};[-1,1])$. Then
for any $Q_{3}>0$ and $0<\eta<\eta_{0}$, there exist $Q>0$, $\mu>0$
and a sequence $\left(F_{k}\right)_{k\geq k_{0}}$ so that 
\end{lem}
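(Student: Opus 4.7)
The plan is to mirror the proof of Lemma~\ref{lem: Aprox by analytic function 2nd}, exploiting the identity $P_N^+(z) = f_+(z)$ for $z \in \overline{S_\mu^+}$ (which follows because $z_\mu^* = z$ reduces the Taylor sum to its $j=0$ term). Consequently, any almost-holomorphic extension $F$ assembled from the $P_N^+$'s will satisfy $\bar\partial F\equiv 0$ on all of $\overline{S_\mu^+}$, and the subsequent Cauchy--Pompeiu construction will automatically produce $F_k$'s that are holomorphic on the upper half disc $S_\mu^+$ glued to the usual lower-half neighborhood of the interval.

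First I would establish, exactly as in Claim~\ref{claim:First d-bar estimate}, the pointwise estimate
\[
|\bar\partial P_N^+(z)| \le \tfrac12\max\Bigl\{m_{N+1}\operatorname{dist}^N(z,J),\ \tfrac{\widehat{\gamma}_{N+1}}{\eta_0^N(N+1)!}\bigl|\tfrac{z}{z_\mu^*}-1\bigr|^N\Bigr\}
\]
and, introducing the lower-half layered sets $V_{k,\eta,Q}^- := V_{k,\eta,Q}\cap\{\operatorname{Im}(z)\le 0\}$, the triple bound
\[
|\bar\partial P_{\ell k-\ell}^+|+|\bar\partial P_{\ell k}^+|+|\bar\partial P_{\ell k+\ell}^+|+|P_{\ell k+\ell}^+-P_{\ell k-\ell}^+|\le 1/Q_1^k
\]
on $V_{k,\eta,Q}^-$ for $k\ge k_0$ and appropriately chosen $\ell$ and $Q$. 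The verification is a verbatim copy of Claim~\ref{claim:First d-bar estimate} since in the lower half plane the projection $z_\mu^*$ coincides with the usual real projection $z^*$ onto $[-\mu,\mu]$.

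Next I would imitate the gluing of Lemma~\ref{lem: d-bar full}, using a partition of unity $(\phi_k)$ adapted to the annuli $V_{k,\eta,Q}^-\setminus V_{k+1,\eta,Q}^-$ and extended smoothly across the real axis so that $\sum_k \phi_k \equiv 1$ on $\overline{S_\mu^+}$. Setting $F = \sum_{k\ge 1}\phi_k P_{\ell k}^+$ and invoking $P_{\ell k}^+|_{\overline{S_\mu^+}} = f_+$ gives $F|_{\overline{S_\mu^+}} = f_+$ (hence $\bar\partial F\equiv 0$ there) and $|\bar\partial F|\le 1/Q_2^k$ on $V_{k,\eta,Q}^-$. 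The functions
\[
F_k(z) := -\frac{1}{\pi}\int_{\mathbb{C}\setminus(V_{k-1,\eta,Q}^-\cup\overline{S_\mu^+})}\bar\partial F(w)\,\frac{d\mu_2(w)}{w-z}
\]
are then holomorphic on $S_\mu^+\cup V_{k,\eta,Q}^-$ — the natural upper-half analogue $U_{k,\eta,Q}^+$ — because $\bar\partial F$ vanishes on $\overline{S_\mu^+}$, so enlarging or shrinking the integration region by that set is harmless. The inter-layer bound $|F_{k-1}-F_k|\le 1/Q_3^k$ on $U_{k,\eta,Q}^+$ and the $C^\infty([-1,1])$ convergence $F_k\to f_+$ then transfer verbatim from Lemma~\ref{lem: Aprox by analytic function 2nd}, via Claim~\ref{claim:distance between V diff k} applied to the thin annulus $V_{k-2,\eta,Q}^-\setminus V_{k-1,\eta,Q}^-$.

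The main obstacle is the clean gluing of the partition of unity across the real axis — specifically, that the $\phi_k$'s extended upward into $\overline{S_\mu^+}$ do not introduce additional $\bar\partial$-errors. This is resolved by the fact that every approximant satisfies $P_{\ell k}^+|_{\overline{S_\mu^+}} = f_+$: any smooth convex combination of them equals $f_+$, hence is holomorphic, on $\overline{S_\mu^+}$. Once this is in place, the remainder of the argument is a direct transcription of the two-sided proof.
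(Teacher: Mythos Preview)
Your proposal is correct and follows the same approach the paper indicates: the paper itself does not give a separate proof of this lemma, it merely observes that $\bar\partial P_N^+=0$ on $\overline{S_\mu^+}$ and says to repeat all the steps of Lemma~\ref{lem: Aprox by analytic function 2nd}. You have correctly identified that this vanishing is the only new ingredient (so that the Cauchy--Pompeiu pieces are automatically holomorphic on $S_\mu^+$), and your handling of the partition-of-unity gluing across the real axis via the identity $P_{\ell k}^+|_{\overline{S_\mu^+}}=f_+$ is exactly the point that makes the transcription go through.
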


\begin{enumerate}
\item $F_{k}\in\text{Hol}\left(\overline{\left(\left\{ \left|\text{Re}z\right|\leq\mu\right\} \cap U_{k,\eta,Q}\right)\cup S_{\mu}^{+}}\right),$
with $\max_{\left(\left\{ \left|\text{Re}z\right|\leq\mu\right\} \cap U_{k,\eta,Q}\right)\cup S_{\mu}^{+}}\left|F_{k-1}-F_{k}\right|\leq\frac{1}{Q_{3}^{k}}$.
\item $F_{k}\to f$ in the $C^{\infty}\left(\left[-\mu,\mu\right]\right)$
topology, i.e., $F_{k}^{(n)}\to f^{(n)}$ uniformly in $\left[-\mu,\mu\right]$
for any $n\geq0.$
\end{enumerate}

\subsubsection{The factorization $B_{\eta}(M,\widehat{\gamma};0)\subseteq B_{\eta_{1}}^{+}(M,\widehat{\gamma};0)+B_{\eta_{1}}^{-}(M\gamma,\widehat{\gamma};0)$}

In this section we show that $B_{\eta}(M,\widehat{\gamma};0)\subseteq B_{\eta_{1}}^{+}(M,\widehat{\gamma};0)+B_{\eta_{1}}^{-}(M,\widehat{\gamma};0)$
where $0<\eta<\eta_{1},$ $M$ is regular, and $\gamma$ is admissible
with non-analytic $\widehat{\gamma}.$
\begin{proof}
Let $0<\eta<\eta_{0}<\eta_{1}$. By Lemma \ref{lem: d-bar full}, there
exists $\mu>0$ such that for any $Q_{2}>1$, there exists $k_{0},\,Q>0$
and $F\in C_{C}^{1}\left(\mathbb{C}\right)$ so that $F\vert_{[-2\mu,2\mu]}=f$
and 
\[
\left|\bar{\partial}F(z)\right|\leq\frac{1}{Q_{2}^{k}},\quad z\in\left\{ z:\text{dist}\left(z,\left[-2\mu,2\mu\right]\right)\leq\frac{\left|z^{*}\right|}{\widehat{L}(k)\eta_{0}}\right\} \cup\left\{ z:\text{dist}\left(z,\left[-2\mu,2\mu\right]\right)\leq\frac{1}{Qm_{k}^{1/k}}\right\} .
\]
By the Cauchy--Pompeiu formula, 
\[
F(z)=-\frac{1}{\pi}\int_{\mathbb{C}}\bar{\partial}F(w)\frac{1}{w-z}d\mu_{2}(w)
\]
Put 
\[
f_{\pm}(z)=-\frac{1}{\pi}\int_{\mathbb{C_{\pm}}}\bar{\partial}F(w)\frac{1}{w-z}d\mu_{2}(w),
\]
where $\mathbb{C}_{\pm}=\left\{ z:\text{\ensuremath{\pm}Im}z>0\right\} .$
Clearly $f=f_{+}+f_{-}$ and the functions $f_{\pm}$ are analytic
in $S_{\mu}^{\pm}:=\left\{ z:\pm\text{Im}z>0,\,|z|\leq\mu\right\} $,
thus it is sufficient to prove that $f_{+}\in B_{\eta_{1}}(M\gamma,\widehat{\gamma};0).$
Observe that 
\begin{equation}
\left|f_{+}^{(n)}(x)\right|\leq n!\int_{\mathbb{C_{+}}}\left|\bar{\partial}F(w)\right|\frac{1}{\left|w-x\right|^{n+1}}d\mu_{2}(w),\quad x\in[-\mu,\mu].\label{eq: f_+ der}
\end{equation}
If 
\[
\text{\ensuremath{\frac{1}{Qm_{k}^{1/k}}}\ensuremath{\ensuremath{\leq}}dist}\left(w,\left[-2\mu,2\mu\right]\right)\leq\frac{1}{Qm_{k-1}^{1/k-1}},
\]
then
\[
\left|\bar{\partial}F(w)\right|\frac{1}{\left|w-x\right|^{n+1}}\leq\frac{1}{Q_{2}^{k-1}}\left(Q^{n}m_{n}^{n/k}\right)
\]
Thus
\[
\left|f_{+}^{(n)}(x)\right|\leq Cn!\sup_{k\geq k_{0}}\frac{1}{Q_{2}^{k-1}}\left(Q^{n}m_{n}^{n/k}\right)
\]
By Claim \ref{claim:m_k reg first claim}, we can choose $Q_{2}$
so enough to $1$ that 
\[
\left|f_{+}^{(n)}(x)\right|\leq C^{n+1}M_{n}.
\]
Similarly using (\ref{eq: f_+ der}) again, for $w$ satisfying

\[
\frac{\left|w^{*}\right|}{\widehat{\eta_{0}L}(k)}\text{\ensuremath{\leq}dist}\left(w,\left[-2\mu,2\mu\right]\right)\leq\frac{\left|w^{*}\right|}{\eta_{0}\widehat{L}(k-1)},
\]
we obtain
\[
\left|f_{+}^{(n)}(x)\right|\leq|x|^{-n}Cn!\frac{1}{Q_{2}^{k-1}}\left(\eta_{0}^{-n}\widehat{L}(k)^{n}\right)\cdot
\]
The function $\widehat{L}$ is slowly varying, so by Claim \ref{claim:m_k reg first claim}
and Remark \ref{rmk: slowly varying}, we can choose $Q_{2}$ so close
to one, such that
\[
\left|f_{+}^{(n)}(x)\right|\leq Cn!\frac{\widehat{L}(n)^{n}}{|x|^{n}}=C\frac{\widehat{\gamma}_{n}}{|x|^{n}}
\]
we have established $f_{+}\in C_{\eta_{1}}^{+}(M\gamma,\widehat{\gamma};0)$
for any $\eta_{1}>\eta.$ By Lemma \ref{lem:non-homg and B}, $f_{+}\in B_{\eta_{1}}^{+}(M\gamma,\widehat{\gamma};0)$
for any $\eta_{1}>\eta.$ The proof of $f_{-}\in B_{\eta_{1}}^{-}(M\gamma,\widehat{\gamma};0)$
is obtained in the same way.
\end{proof}

\section{Estimates for the generalized Borel transform}

Throughout this section we will use the following auxiliary lemmas.
Their proofs are given in Appendix $A.$
\begin{lem}
\label{lem: E curve lemma}Let $\gamma$ be an admissible weight.
For any $\eta>1$, there is $C>0$ so that for any $r>0$
\[
\max\left\{ \left|E(z)\right|:\text{\ensuremath{\left|\text{arg}(z)\right|\geq\frac{\eta}{\widehat{L}\left(L^{-1}(r)\right)}}\,\ensuremath{or}\,}|z|\leq r\right\} \leq CE(r).
\]
\end{lem}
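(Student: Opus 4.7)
The plan is to split the set $\{|z|\le r\}\cup\{|\arg z|\ge\eta/\widehat L(L^{-1}(r))\}$ into three pieces and bound $|E(z)|$ on each separately.

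First, if $|z|\le r$, the nonnegativity of the Taylor coefficients of $E$ immediately gives $|E(z)|\le E(|z|)\le E(r)$, since $E$ is monotone on $\mathbb R_+$. Next, for $|z|>r$ with $|\arg z|\ge \pi/2+\delta$ (for some small fixed $\delta>0$), the second assertion of Theorem B gives $|zE(z)|=O(1)$ uniformly in $\mathbb C\setminus\Omega(\pi/2+\delta)$; combined with the elementary lower bound $E(r)\ge E(0)=1/\gamma_1>0$ and the continuity of $E$ on compacta, this yields $|E(z)|\lesssim 1\lesssim E(r)$.

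The main case is $|z|>r$ with $\eta/\widehat L(L^{-1}(r))\le|\arg z|\le\pi/2+\delta$. Here both $z$ and $r$ lie in $\Omega(\pi/2+\delta)$, so the main asymptotic of Theorem B applies to both $E(z)$ and $E(r)$, and taking quotients gives
\[
\frac{|E(z)|}{E(r)}\asymp\frac{r}{|z|}\Bigl|\frac{s_z}{s_r}\Bigr|^{1/2}\Bigl|\frac{\varepsilon(s_r)}{\varepsilon(s_z)}\Bigr|^{1/2}\exp\bigl(\text{Re}(s_z\varepsilon(s_z))-s_r\varepsilon(s_r)\bigr),
\]
where $s_z,s_r$ are the saddle points for $z,r$. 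Writing $s_z=Re^{i\phi}$ and taking real and imaginary parts of the saddle-point equation $\log L(s)+\varepsilon(s)=\log z$, together with the slow-variation assumption (B), one obtains $R\approx L^{-1}(|z|)$, $\phi\approx (\arg z)/\varepsilon(R)$, and $\text{Re}(s_z\varepsilon(s_z))\approx R\varepsilon(R)\cos\phi$. Evaluating $\widehat\gamma_n$ by Laplace's method at the maximizing $\rho\approx 2n/(\pi\varepsilon(\rho))$ yields $\widehat L(L^{-1}(r))\asymp 2/(\pi\varepsilon(s_r))$, so the hypothesis $|\arg z|\ge\eta/\widehat L(L^{-1}(r))$ translates into $|\phi|\gtrsim\pi\eta\varepsilon(s_r)/(2\varepsilon(R))$. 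Since $\eta>1$ and $\varepsilon$ is slowly varying, this keeps $\cos\phi$ bounded above by a strictly negative constant, so the resulting factor $\exp\bigl(R\varepsilon(R)(\cos\phi-1)\bigr)$ decays strongly enough to absorb both the radial growth $\exp(R\varepsilon(R)-s_r\varepsilon(s_r))$ and the algebraic prefactors $r/|z|$, $|s_z/s_r|^{1/2}$, $|\varepsilon(s_r)/\varepsilon(s_z)|^{1/2}$.

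The main obstacle is in this last case: tracking how the saddle point $s_z$ moves as $z$ leaves the positive real axis, and converting the angular hypothesis on $z$ into an effective angular deviation of $s_z$ large enough to dominate the radial gain coming from $|z|>r$. The admissibility conditions (B) and (D) are essential both for the Taylor expansion of $\varepsilon$ and $L$ around real arguments and for ensuring that $\varepsilon(R)$ remains comparable to $\varepsilon(s_r)$ across the relevant range of $|z|$, so that the bound $\cos\phi\le c<1$ holds uniformly.
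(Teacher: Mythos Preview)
Your first two cases are fine and match the paper. The gap is in your ``main case'': you assert that when $|z|>r$ and $\eta/\widehat L(L^{-1}(r))\le|\arg z|\le \pi/2+\delta$, the point $z$ lies in $\Omega(\pi/2+\delta)$ and hence the main asymptotic of Theorem~B applies. This is false. The set $\Omega(\alpha)$ is \emph{not} the sector $\{|\arg z|<\alpha\}$; it is the image of the sector $S(\alpha,\rho_0)$ in the $s$-plane under the saddle-point map. In the $z$-plane its boundary sits at $|\arg z|\approx \alpha\,\varepsilon(\rho_{|z|})$, so in the non-analytic case ($\varepsilon\to 0$) the region $\Omega(\pi/2+\delta)$ is a narrow cusp around $\mathbb{R}_+$, far smaller than the half-plane $\{|\arg z|\le \pi/2+\delta\}$. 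Consequently you cannot invoke the main term of Theorem~B on your entire case~3 region.

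In fact your own computation already reveals this. You obtain $|\phi|\gtrsim \tfrac{\pi\eta}{2}\,\varepsilon(s_r)/\varepsilon(R)$; since $|z|>r$ gives $R\ge s_r$ and (in the non-analytic case) $\varepsilon$ is eventually decreasing, this yields $|\phi|\gtrsim \pi\eta/2>\pi/2$. But $z\in\Omega(\pi/2+\delta)$ means precisely $|\phi|<\pi/2+\delta$, so for small $\delta$ the intersection of your case~3 with $\Omega(\pi/2+\delta)$ is \emph{empty}. This is exactly what the paper proves: the set $\{|\arg z|\ge \eta/\widehat L(L^{-1}(r))\}\cap\{|z|>r\}$ lies entirely in $\mathbb{C}\setminus\Omega(\pi/2+\delta)$, so only the $zE(z)=O(1)$ estimate is needed and there is no ``main case'' at all. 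The paper reduces this containment to the inequality $\eta\,\varepsilon(L^{-1}(r))\ge \varepsilon(\rho_{|z|})$, which is immediate in the analytic case and follows from the slow variation of $\varepsilon$ in the non-analytic case. Your remark that ``$\varepsilon$ is slowly varying'' keeps $\varepsilon(s_r)/\varepsilon(R)$ comparable is also not quite right, since $R/s_r$ is unbounded; what is actually used is monotonicity of $\varepsilon$ (condition~(D)), which gives the inequality in the right direction.
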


\begin{lem}
\label{lem:E exp bound}Let $\gamma$ be an admissible weight. We
have 
\[
E\left(L(k)\right)\lesssim e^{2k},\quad k\geq1,
\]
where $L(k)=\text{\ensuremath{\gamma\left(k\right)^{1/k}}}.$
\end{lem}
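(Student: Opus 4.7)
The plan is to derive the bound from the saddle-point asymptotic formula for $E$ given in Theorem~B, applied at the positive real point $z=L(k)$. That theorem yields
\[
L(k)\,E(L(k)) \;=\; \bigl(1+o(1)\bigr)\sqrt{\tfrac{2\pi s_k}{\varepsilon(s_k)}}\,\exp\!\bigl(s_k\,\varepsilon(s_k)\bigr) \;+\; o(1),
\]
where $s_k$ is the unique large-$|s|$ solution of the saddle-point equation $\log L(s)+\varepsilon(s)=\log L(k)$. The whole proof therefore reduces to bounding $s_k\,\varepsilon(s_k)$ by essentially $2k$.

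First I would locate $s_k$. Because $\varepsilon>0$ and $L$ is strictly increasing (since $L'=\varepsilon L/s>0$), the saddle equation forces
\[
\log L(s_k) \;=\; \log L(k)-\varepsilon(s_k) \;<\; \log L(k),
\]
so $s_k<k$. Conversely, $\varepsilon$ is bounded on $\mathbb{R}_+$ by admissibility, so $\log L(s_k)$ differs from $\log L(k)$ by at most a constant; combined with $L(k)\to\infty$, this forces $s_k\to\infty$ as $k\to\infty$.

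By admissibility condition~(C), $\varepsilon_\infty:=\lim_{\rho\to\infty}\varepsilon(\rho)$ exists and is strictly less than $2$. Fix $\delta>0$ with $\varepsilon_\infty+\delta\leq 2-\delta$. Since $s_k\to\infty$, eventually $\varepsilon(s_k)\leq\varepsilon_\infty+\delta$, and combining with $s_k<k$ gives
\[
s_k\,\varepsilon(s_k) \;\leq\; k\,\varepsilon(s_k) \;\leq\; (2-\delta)\,k,\qquad k\geq k_0.
\]

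Plugging this back into Theorem~B, the exponential factor is bounded by $e^{(2-\delta)k}$, the factor $1/L(k)$ is bounded (in fact vanishes), and the prefactor $\sqrt{s_k/\varepsilon(s_k)}$ is at most sub-exponential in $k$: in the analytic case $\varepsilon(s_k)\asymp 1$ so this factor is $O(\sqrt{k})$, while in the non-analytic case admissibility~(D) guarantees $\varepsilon(\rho)\gtrsim_\epsilon e^{-\epsilon\rho}$ for every $\epsilon>0$, so $1/\varepsilon(s_k)$ grows sub-exponentially and can be absorbed in the gap between $e^{(2-\delta)k}$ and $e^{2k}$. This yields $E(L(k))\lesssim e^{2k}$ for $k\geq k_0$, and the finitely many values $1\leq k<k_0$ are absorbed into the implicit constant. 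The only mildly delicate point is the control of $\sqrt{1/\varepsilon(s_k)}$ in the non-analytic regime $\varepsilon_\infty=0$, which is exactly where admissibility~(D) is used.
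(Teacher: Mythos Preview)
Your proof is correct and follows essentially the same route as the paper: invoke Theorem~B at $r=L(k)$, use the saddle-point equation to conclude $s_k<k$, and then exploit $\lim_{\rho\to\infty}\varepsilon(\rho)<2$ to bound $s_k\varepsilon(s_k)$ by roughly $2k$. The only cosmetic difference is that the paper passes through the simplified asymptotic $\log\bigl(rE(r)\bigr)\sim s\varepsilon(s)$ and uses monotonicity of $s\mapsto s\varepsilon(s)$ to reach $k\varepsilon(k)$, whereas you keep the full asymptotic formula, bound by $k\varepsilon(s_k)$, and treat the prefactor $\sqrt{s_k/\varepsilon(s_k)}$ explicitly via admissibility~(D); your treatment is in fact slightly more careful on this last point.
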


\subsection{Estimates for the generalized Borel transform in the non-analytic
case\label{subsec:2nd inc}}

Throughout this section we fix an admissible $\gamma$ with non-analytic
$\widehat{\gamma}$ (i.e. $\lim_{\rho\to\infty}\varepsilon\left(\rho\right)=0)$,
a regular sequence $M=\left(n!m_{n}\right)_{n\geq0}$ and $J=[0,1]$.
Recall that $\gamma_{k}^{1/k}=L(k)$ and $\left(\widehat{\gamma}_{k}/k!\right)^{1/k}=\widehat{L}(k)$.
For a given $k\geq1$, $\eta>1$ and $Q>0$, recall that the set $U_{k,\eta,Q}$
(see Figure 5.2) was defined as a $\frac{1}{QL(k)m_{k}^{1/k}}$
neighborhood of the sector
\[
\left\{ z:\left|\text{arg}(z)\right|\leq\frac{\eta}{\widehat{L}(k)},\,\quad\text{Re}\left(z\right)\in J\right\} .
\]

The main estimate for generalized Borel transform is given in the
following lemma.
\begin{lem}
\label{lem:Main estimate genrlize borel}For any $1<\eta$ and $0<Q$,
there exists a constant $C=C(\eta,Q),$ such that if $g\in\text{Hol}\left(U_{k,\eta,Q}\right)\cap C\left(\overline{U_{k,\eta,Q}}\right)$
with $\Vert g\Vert_{L^{\infty}\left(U_{k,\eta,Q}\right)}\leq1,$ then
\[
\left|\left(\bbg g\right)^{\left(n\right)}\left(x\right)\right|\leq C^{n+1}\cdot n!\cdot m_{k}^{n/k}\cdot E\left(L\left(k\right)\right)E\left(x\right),
\]
for any $x\geq0$ and $n\in\mathbb{Z}_{+}$.
\end{lem}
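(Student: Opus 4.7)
The plan is to realize $F := \bbg g$ as a contour integral over $\partial U_{k,\eta,Q}$ and to estimate its derivatives via Cauchy's formula on a small circle around $x$. Since $g \in \text{Hol}(U_{k,\eta,Q})$ and $\partial U_{k,\eta,Q}$ encircles the origin, I would first define
$$F(z) := \frac{1}{2\pi i}\oint_{\partial U_{k,\eta,Q}} g(w)\,E(z/w)\,\frac{dw}{w}.$$
Expanding $E(z/w) = \sum_{n\ge 0} z^n/(w^n\gamma_{n+1})$ and applying Cauchy's formula $\frac{1}{2\pi i}\oint g(w)\,dw/w^{n+1} = g^{(n)}(0)/n!$ term by term shows that $F(z) = \sum_{n\ge 0} \frac{g^{(n)}(0)}{n!\,\gamma_{n+1}}z^n$, so indeed $F = \bbg g$; since $\gamma_n^{1/n}\to\infty$, the series converges everywhere and $F$ is entire.

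For each $x \ge 0$, Cauchy's formula for derivatives on the circle $|\tau - x| = R$ gives $|F^{(n)}(x)| \le \frac{n!}{R^n}\sup_{|\tau - x| = R}|F(\tau)|$. Choosing $R = c\cdot m_k^{-1/k}$ with a small constant $c = c(\eta, Q) > 0$ produces $n!/R^n = (1/c)^n n!\,m_k^{n/k}$, so it only remains to prove $\sup_{|\tau - x| = R}|F(\tau)| \lesssim E(L(k))\,E(x)$. Using $|g|\le 1$ in the integral representation, I would decompose $\partial U_{k,\eta,Q}$ into three pieces and bound $|E(\tau/w)|$ on each: (i) the back arc near $0$, where $|w| \asymp r_0 := (Q L(k) m_k^{1/k})^{-1}$ and $|\arg(\tau/w)|$ is close to $\pi$, so Theorem B gives $|E(\tau/w)| = O(|w|/|\tau|)$ and the contribution is bounded; (ii) the slanted sides at $\arg w = \pm \eta/\widehat{L}(k)$, where the choice of $c$ ensures $|\arg(\tau/w)| \ge \eta/(2\widehat{L}(k))$ and Lemma \ref{lem: E curve lemma} applied with $r = L(k)$ yields $|E(\tau/w)| \lesssim E(L(k))$; and (iii) the outer arc near $w = 1$, where $\tau/w$ remains close to $\tau$ and $|E(\tau/w)| \lesssim E(x)$. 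Summing the three contributions yields the desired bound on $\sup_{|\tau-x|=R}|F(\tau)|$, and substituting back into the Cauchy estimate gives the claim.

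The main obstacle is the regime of small $x$, where the circle $|\tau-x| = R$ may encircle points with $\arg\tau$ comparable to $\pm\eta/\widehat{L}(k)$ and the angular hypothesis on the slanted sides fails. This is handled either by shrinking $R$ to $\min(c\,m_k^{-1/k},\,x\cdot\eta/(2\widehat{L}(k)))$ in the small-$x$ regime, or by separately estimating $F^{(n)}(0)$ directly from $|g^{(n)}(0)| \le n!/r_0^n$ combined with $L(k)^n/\gamma_{n+1} \le E(L(k))$; the latter gives $|F^{(n)}(0)| \lesssim C^n n!\, m_k^{n/k}\, E(L(k))$, which by continuity extends to small $x$ since $E(x) \ge E(0) = 1/\gamma_1$. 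A secondary technical point is absorbing a logarithmic factor $\log(1/r_0) \lesssim k$ arising from integration of $1/|w|$ along the slanted sides; this is handled via Lemma \ref{lem:E exp bound} (which bounds $E(L(k)) \lesssim e^{2k}$, so polynomial-in-$k$ factors are absorbed into the implicit constant) or by a more refined use of the asymptotics of $E$ on rays at angle $\pm\eta/\widehat{L}(k)$ (specifically, the quadratic-in-angle decrease of $\mathrm{Re}(s\varepsilon(s))$ relative to the real saddle value).
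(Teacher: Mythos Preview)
Your integral representation is correct, but the approach is dual to the paper's and runs into a genuine obstacle at step (iii). The paper does \emph{not} integrate $g(w)E(z/w)$ over $\partial U_{k,\eta,Q}$. Instead it writes, for each $x\ge 0$,
\[
G^{(n)}(x)=\frac{1}{2\pi i}\int_{\Gamma(R)} g^{(n)}\!\left(\frac{x}{w}\right)E(w)\,\frac{dw}{w^{\,n+1}},
\qquad
\Gamma(R)=\{Re^{i\theta}:|\theta|<\theta_R\}\cup\{ue^{\pm i\theta_R}:u>R\},\ \ \theta_R=\frac{\eta}{\widehat L(L^{-1}(R))},
\]
i.e.\ it places $E$ on the large contour and $g$ on the small argument $x/w$. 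The derivatives come from differentiating under the integral sign (chain rule gives the extra $w^{-n}$), and $|g^{(n)}(x/w)|\le n!\,(Q\,m_k^{1/k}L(k))^n$ follows from the ordinary Cauchy estimate, since $x/w$ lies in the sector at distance $\ge (Q\,m_k^{1/k}L(k))^{-1}$ from $\partial U_{k,\eta,Q}$. Lemma~\ref{lem: E curve lemma} then gives $\sup_{\Gamma(R)}|E|\le CE(R)$, and the single choice $R=\max\{L(k),x\}$ simultaneously forces $\theta_R\le \eta/\widehat L(k)$ (so the inclusion $\{x/w:w\in\Gamma(R)\}\subset U_{k,\eta,Q}$ holds) and cancels the factor $L(k)^n$ against $R^{-n}$. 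No case split, no logarithmic loss, no small-$x$ difficulty.

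The specific gap in your argument is the outer arc. There $|w|\approx 1$ and $\arg w$ can be zero, so $\tau/w$ can lie on the positive real axis with $|\tau/w|$ as large as $x+R$, where $R=c\,m_k^{-1/k}$. Since $m_k^{1/k}\ge 1$ need not tend to infinity (e.g.\ $M_n=n!$ gives $m_k\equiv 1$, $R=c$), you must control $E(x+c)/E(x)$. But for admissible $\gamma$ this ratio is unbounded: with $\log E(r)\sim \rho\varepsilon(\rho)$ and $\rho\approx L^{-1}(r)$, one gets $E(x+c)/E(x)\approx \exp\!\big(cL^{-1}(x)/x\big)$, which in the Beurling case $L(n)=\log n$ is $\exp(c\,e^x/x)$. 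So ``$|E(\tau/w)|\lesssim E(x)$'' on the outer arc is false, and the Cauchy-circle step cannot close. The fix is precisely the paper's move: swap the roles of $g$ and $E$ so that $E$ sits on a contour you are free to scale with $x$, and let $R=\max\{L(k),x\}$ absorb the growth.
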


\begin{proof}
For $0<R<R^{\prime},$ consider the curve (see Figure \ref{fig: Curve Gamma(R,R')})

\begin{figure}[H]

\includegraphics{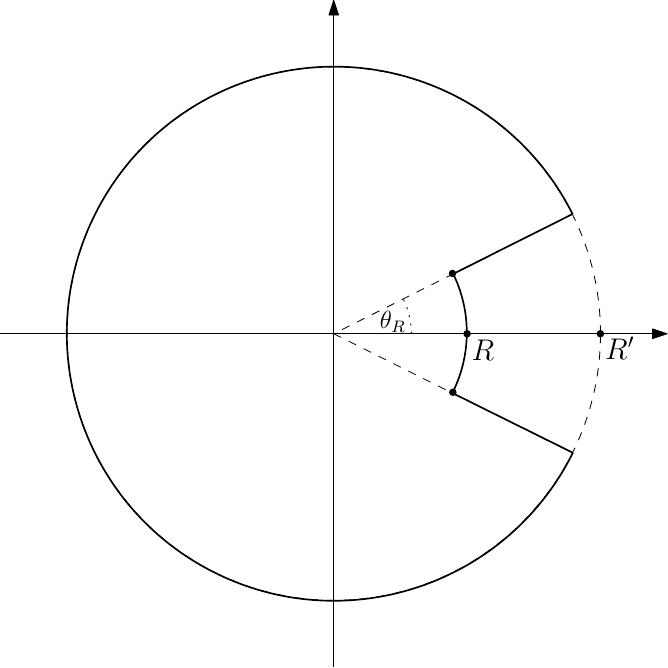}\label{fig: Curve Gamma(R,R')}\caption{The curve $\Gamma(R,R^{\prime})$}

\end{figure}

\[
\Gamma(R,R^{\prime}):=\left\{ Re^{i\theta}:\left|\theta\right|<\theta_{R}\right\} \cup\left\{ te^{\pm i\theta_{R}}:t\in\left[R,R^{\prime}\right]\right\} \cup\left\{ R^{\prime}e^{i\theta}:\theta_{R}<\left|\theta\right|\le\pi\right\} 
\]
where
\[
\theta_{R}=\frac{\eta}{\widehat{L}\left(L^{-1}\left(R\right)\right)}.
\]
Put $G=\bbg g$. By Cauchy formula, 
\[
G\left(x\right)=\frac{1}{2\pi i}\int_{\Gamma(R,R^{\prime})}g\left(\frac{x}{w}\right)E\left(w\right)\frac{dw}{w},
\]
provided that
\[
\left\{ \frac{x}{w}:w\in\Gamma(R,R^{\prime})\right\} \subset U_{k,\eta,Q}.
\]
Moreover, by differentiating under the integral sign, we find that
\[
G^{\left(n\right)}\left(x\right)=\frac{1}{2\pi i}\int_{\Gamma(R,R^{\prime})}g^{\left(n\right)}\left(\frac{x}{w}\right)E\left(w\right)\frac{dw}{w^{n+1}}.
\]
Taking $R^{\prime}\to\infty$ and putting
\[
\Gamma(R):=\left\{ Re^{i\theta}:\left|\theta\right|<\theta_{R}\right\} \cup\left\{ ue^{\pm i\theta_{R}}:u>R\right\} .
\]
We obtain
\[
G^{\left(n\right)}\left(x\right)=\frac{1}{2\pi i}\int_{\Gamma(R)}g^{\left(n\right)}\left(\frac{x}{w}\right)E\left(w\right)\frac{dw}{w^{n+1}}.
\]
By Cauchy's estimates, if 
\begin{equation}
\left\{ \frac{x}{w}:w\in\Gamma(R)\right\} \subseteq\left\{ z:\text{arg}(z)\leq\frac{\eta}{\widehat{L}(k)},\,\quad\text{Re}\left(z\right)\in[0,1]\right\} .\label{eq:inclu Gamma in V}
\end{equation}
then 
\[
\left|G^{\left(n\right)}\left(x\right)\right|\leq n!\left(Q\cdot m_{k}^{1/k}\cdot L(k)\right)^{n}\frac{1}{2\pi}\int_{\Gamma(R)}\left|E\left(w\right)\frac{dw}{w^{n+1}}\right|\leq n!\left(Q\cdot m_{k}^{1/k}\cdot L(k)\right)^{n}\frac{1}{R^{n}}\sup_{\Gamma(R)}\left|E\left(w\right)\right|.
\]
By Lemma \ref{lem: E curve lemma}, $\sup_{\Gamma(R)}\left|E\left(w\right)\right|\leq CE\left(R\right),$
so we have 
\[
\left|G^{\left(n\right)}\left(x\right)\right|\lesssim n!\left(Q\cdot m_{k}^{1/k}\cdot L(k)\right)^{n}\frac{E\left(R\right)}{R^{n}},
\]
provided that (\ref{eq:inclu Gamma in V}) holds. Finally we choose
\[
R=\max\left\{ L\left(k\right),x\right\} .
\]
and notice that with this choice 
\[
\theta_{R}=\frac{\eta}{\widehat{L}\left(L^{-1}\left(R\right)\right)}\leq\frac{\eta}{\widehat{L}\left(k\right)}.
\]
In particular, (\ref{eq:inclu Gamma in V}) holds. We have established
\[
\left|G^{\left(n\right)}\left(x\right)\right|\lesssim C^{n}\cdot n!\cdot m_{k}^{n/k}\max\left\{ E\left(L(k)\right),E\left(x\right)\right\} ,
\]
as claimed.
\end{proof}
\begin{proof}[Proof of the inclusion $\bbg B_{1/\eta}(M\gamma,\widehat{\gamma};I)\subseteq A(M,\gamma,I)$
in the non-analytic case]
It suffices to prove the inclusion in the case that $I$ is compact,
which by rescaling can be assumed to be $J=[0,1].$ Let $1<\eta<\eta_{0}$,
$f\in B_{1/\eta_{0}}(M\gamma,\widehat{\gamma};J)$, and let $C_{4}>0$
be as in Claim \ref{claim:m_k reg first claim}.

By Lemma \ref{lem: Aprox by analytic function 2nd} there is a $Q>0$
and a sequence $\left(f_{k}\right)_{k\geq k_{0}}\subset C^{\infty}\left(J\right)$
so that $\sum_{k\geq k_{0}}f_{k}$ converges to $f$ in the $C^{\infty}\left(J\right)$
topology, and $f_{k}\in\text{Hol}\left(U_{k,\eta,Q}\right)\cup C\left(\overline{U_{k,\eta,Q}}\right),$
with $\max_{U_{k,\eta,Q}}\left|f_{k}\right|\leq\frac{1}{e^{3k}C_{4}^{k}}$
for any $k\geq k_{0}.$ By Lemma \ref{lem:Main estimate genrlize borel}
we have 
\[
\left|\left(\bbg f\right)^{\left(n\right)}\left(x\right)\right|\leq C^{n+1}\cdot n!\cdot m_{k}^{n/k}\cdot E\left(L\left(k\right)\right)\cdot E\left(x\right)\cdot\frac{1}{e^{3k}C_{4}^{k}},
\]
By Lemma \ref{lem:E exp bound}, $E\left(L\left(k\right)\right)\lesssim e^{2k}$,
and therefore
\[
\left|\left(\bbg f_{k}\right)^{\left(n\right)}\left(x\right)\right|\leq C^{n+1}\cdot n!\cdot m_{k}^{n/k}\cdot E\left(x\right)\cdot\frac{1}{e^{k}C_{4}^{k}}
\]
for any $n\geq0$. By the third assertion of Claim \ref{claim:m_k reg first claim},
\[
m_{k}^{n/k}\leq C_{4}^{k+n}m_{n},
\]
so, we conclude that 
\[
\left|\left(\bbg f_{k}\right)^{\left(n\right)}\left(x\right)\right|\leq\frac{C^{n+1}}{e^{k}}\cdot C_{4}^{n}\cdot M_{n}E\left(x\right).
\]
Thus, $\sum_{k\geq k_{0}}\bbg f_{k}$ converges to a function $G\in A(M,\gamma,J).$ Since $G$ and $\mathcal{B}_{\gamma}f$
have the same Taylor coefficients at the origin, we conclude the proof.
\end{proof}

\subsection{Estimates for the generalized Borel transform in the analytic case}

Throughout this section, we fix an admissible $\gamma$ with analytic
$\widehat{\gamma}$ (i.e. $\lim_{\rho\to\infty}\varepsilon\left(\rho\right)>0$),
and a regular sequence $M=\left(n!m_{n}\right)_{n\ge0}$. For a given
$\eta>0$, recall that 
\[
\Omega_{\eta}\left(\gamma\right):=\left\{ z:\sup_{t>0}E(t\eta)\left|K\left(\frac{t}{z}\right)\right|<\infty\right\} .
\]

In this section, we will typically suppress the dependence on $\gamma$,
writing $\Omega_{\eta}.$ For a given $k\in\mathbb{Z_{+}}$ and $Q>0$, we
consider the set 
\[
\Omega_{k,\eta,Q}=\left\{ z:\text{dist\ensuremath{\left(z,\Omega_{\eta}\right)}}\leq\frac{1}{Qm_{k}^{1/k}L(k)}\right\} 
\]
The next lemma is a variant of Lemma \ref{lem:Main estimate genrlize borel}
for the analytic case. 
\begin{lem}
\label{lem:Main estimate genrlize borel analytic}For any $1<\eta$,
$0<Q$, there exists a constant $C=C(\eta,Q)$, such that if $g\in\text{Hol}\left(\Omega_{k,\eta,Q}\right)\cap C\left(\overline{\Omega_{k,\eta,Q}}\right)$
then, for any $x\geq0$ and $n\in\mathbb{Z}_{+}$ we have
\[
\left|\left(\bbg g\right)^{(n)}\left(x\right)\right|\leq C^{n+1}\cdot n!\cdot m_{k}^{n/k}\cdot E\left(L\left(k\right)\right)E\left(x\eta\right).
\]
\end{lem}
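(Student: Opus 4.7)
The strategy is to mirror the proof of Lemma 6.2, replacing the sectorial contour $\Gamma(R)$ by a contour adapted to the geometry of $\Omega_{\eta}(\gamma)$. I would first establish a Cauchy-type representation
\[
(\bbg g)(x)=\frac{1}{2\pi i}\int_{\Gamma}g\!\left(\tfrac{x}{w}\right)E(w)\frac{dw}{w},
\]
valid for a contour $\Gamma$ in the $w$-plane such that $x/w\in\Omega_{\eta}(\gamma)$ for every $w\in\Gamma$. Since $0\in\overline{\Omega_{\eta}(\gamma)}$ and $\Omega_{k,\eta,Q}$ is a $\tfrac{1}{Qm_{k}^{1/k}L(k)}$-neighborhood of $\overline{\Omega_{\eta}(\gamma)}$, the function $g$ is analytic in a disk around $0$, so it admits a Taylor expansion there. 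Starting from the Cauchy--Borel identity $\gamma_{j+1}^{-1}=\frac{1}{2\pi i}\oint E(w)\,w^{-j-1}dw$ on a small circle around $0$ and exchanging sum and integral gives the representation. Then $\Gamma$ is realized by deforming this small circle outward, using that $E(w)/w^{n+1}$ is rapidly decaying in the complement of $\Omega(\tfrac{\pi}{2}+\delta)$ by Theorem B, so the outer portion of the deformation can be sent to infinity without boundary contribution.

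Differentiating under the integral sign,
\[
(\bbg g)^{(n)}(x)=\frac{1}{2\pi i}\int_{\Gamma}g^{(n)}\!\left(\tfrac{x}{w}\right)E(w)\frac{dw}{w^{n+1}}.
\]
Cauchy's estimate on the disk of radius $\tfrac{1}{Qm_{k}^{1/k}L(k)}$ centered at $x/w\in\Omega_{\eta}(\gamma)$ (which sits inside $\Omega_{k,\eta,Q}$ by construction) gives
\[
\left|g^{(n)}\!\left(\tfrac{x}{w}\right)\right|\leq n!\,\bigl(Qm_{k}^{1/k}L(k)\bigr)^{n}\|g\|_{L^{\infty}(\Omega_{k,\eta,Q})}.
\]
It remains to estimate $\int_{\Gamma}|E(w)||w|^{-n-1}|dw|$. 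I would take $\Gamma$ to lie on (or just inside) the boundary of $x\cdot\bigl(1/\Omega_{\eta}(\gamma)\bigr)$, closed off at infinity in directions where $E$ decays. By the defining property of $\Omega_{\eta}(\gamma)$, this choice forces $|w|\gtrsim x\eta$ along $\Gamma$. Combining a variant of Lemma 7.1 with the saddle-point asymptotics of Theorem B, one obtains $|E(w)|\lesssim E(\max(L(k),x\eta))$ uniformly on $\Gamma$, and the integral itself contributes at most a factor of $\max(L(k),x\eta)^{-n}$. Coupled with the Cauchy estimate, and using $\max(E(L(k)),E(x\eta))\leq E(L(k))\cdot E(x\eta)$, this produces the claimed bound after the $L(k)^{n}$ in the numerator cancels the $\max(L(k),x\eta)^{-n}\leq L(k)^{-n}$.

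The main obstacle is the construction and quantitative control of $\Gamma$. In the non-analytic case $\Omega_{\eta}(\gamma)$ is essentially a real interval, so $\Gamma(R)$ is easily described as two rays joined by a short arc and Lemma 7.1 bounds $|E(w)|$ on it directly. In the analytic case, $\partial\Omega_{\eta}(\gamma)$ can be geometrically complicated and depends on the saddle-point structure of $K$ and $E$ (as illustrated in Figure 1.1). One must therefore (i) verify that the contour deformation into $\Gamma$ is admissible despite the more intricate boundary shape, using the decay $|wE(w)|=O(1)$ outside $\Omega(\tfrac{\pi}{2}+\delta)$, and (ii) extract the pointwise bound $|E(w)|\lesssim E(\max(L(k),x\eta))$ on $\Gamma$. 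This latter estimate is precisely what is responsible for the factor $E(x\eta)$ (rather than $E(x)$) in the final inequality, reflecting the $\eta$-dependence already encoded in the definition of $\Omega_{\eta}(\gamma)$.
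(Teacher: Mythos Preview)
Your overall architecture is correct and matches the paper: Cauchy--Borel representation on a contour $\Gamma$ with $x/w\in\Omega_\eta(\gamma)$, differentiation under the integral, Cauchy estimate for $g^{(n)}$ using the $\tfrac{1}{Qm_k^{1/k}L(k)}$-neighborhood, and then the bound $|E(w)|\lesssim E(R)$ on $\Gamma$ with $R=\max\{L(k),\eta x\}$.

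Where you diverge is in the choice of $\Gamma$. You propose to run $\Gamma$ along $\partial\bigl(x\cdot\Omega_\eta(\gamma)^{-1}\bigr)$ and then spend the last paragraph worrying about its possibly complicated shape. The paper avoids this entirely: in the analytic case the limit $\widehat{L}:=\lim_{x\to\infty}\widehat{\gamma}(x)^{1/x}$ is finite and positive, so one may fix $1<\eta_0<\eta$ with $\theta_0:=\eta_0/\widehat{L}<\pi$ and use \emph{exactly the same sectorial contour} as in Lemma~6.2, namely
\[
\Gamma(R)=\{Re^{i\theta}:|\theta|<\theta_0\}\cup\{ue^{\pm i\theta_0}:u>R\},\qquad R=\max\{L(k),\eta x\}.
\]
Then $\{x/w:w\in\Gamma(R)\}$ lies in the fixed sector $\{|\arg z|\le\theta_0,\ |z|\le1/\eta\}\subset\Omega_\eta(\gamma)$, and Lemma~\ref{lem: E curve lemma} applies verbatim to give $\sup_{\Gamma(R)}|E|\lesssim E(R)$. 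In other words, the ``main obstacle'' you identify simply does not arise: the analyticity of $\widehat{\gamma}$ collapses the $k$-dependent opening $\eta/\widehat{L}(k)$ to a constant opening $\eta_0/\widehat{L}$, and no knowledge of $\partial\Omega_\eta(\gamma)$ beyond this sectorial inclusion is needed.

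A smaller point: your contour, as described, only guarantees $|w|\gtrsim x\eta$, not $|w|\ge\max\{L(k),x\eta\}$; the $L(k)$ has to be built into the inner radius of $\Gamma$ explicitly (as the paper does via $R$), otherwise the factor $\max(L(k),x\eta)^{-n}$ you invoke in the last step is not justified when $x\eta<L(k)$.
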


\begin{proof}
Put $\lim_{x\to\infty}\widehat{\gamma}(x)^{1/x}=:\widehat{L}\in\left(\tfrac{1}{\pi},\infty\right)$.
Fix $1<\eta_{0}<\eta,$ so that $\frac{\eta_{0}}{\widehat{L}}<\pi.$
For $0<R<R^{\prime},$ consider the curve 
\[
\Gamma(R,R^{\prime}):=\left\{ Re^{i\theta}:\left|\theta\right|<\theta_{0}\right\} \cup\left\{ te^{\pm i\theta_{0}}:t\in\left[R,R^{\prime}\right]\right\} \cup\left\{ R^{\prime}e^{i\theta}:\theta_{0}<\left|\theta\right|\le\pi\right\} 
\]
where
\[
\theta_{0}=\frac{\eta_{0}}{\widehat{L}}.
\]
Put $G=\bbg g$. By Cauchy formula, 
\[
G\left(x\right)=\frac{1}{2\pi i}\int_{\Gamma(R,R^{\prime})}g\left(\frac{x}{w}\right)E\left(w\right)\frac{dw}{w},
\]
provided that
\[
\left\{ \frac{x}{w}:w\in\Gamma(R,R^{\prime})\right\} \subset \Omega_{k,\eta,C}.
\]
Moreover, by differentiating under the integral sign, we find that
\[
G^{\left(n\right)}\left(x\right)=\frac{1}{2\pi i}\int_{\Gamma(R,R^{\prime})}g^{\left(n\right)}\left(\frac{x}{w}\right)E\left(w\right)\frac{dw}{w^{n+1}}.
\]
Taking $R^{\prime}\to\infty,$ and putting
\[
\Gamma(R):=\left\{ Re^{i\theta}:\left|\theta\right|<\theta_{R}\right\} \cup\left\{ ue^{\pm i\theta_{R}}:u>R\right\} .
\]
We obtain
\[
G^{\left(n\right)}\left(x\right)=\frac{1}{2\pi i}\int_{\Gamma(R)}g^{\left(n\right)}\left(\frac{x}{w}\right)E\left(w\right)\frac{dw}{w^{n+1}}.
\]
By Cauchy's estimates if 
\begin{equation}
\left\{ \frac{x}{w}:w\in\Gamma(R)\right\} \subseteq\Omega_{\eta}\label{eq:inclu Gamma in V-1}
\end{equation}
then 
\[
\left|G^{\left(n\right)}\left(x\right)\right|\leq n!\left(Qm_{k}^{1/k}L(k)\right)^{n}\frac{1}{2\pi}\int_{\Gamma(R)}\left|E\left(w\right)\frac{dw}{w^{n+1}}\right|\leq n!\left(Qm_{k}^{1/k}L(k)\right)^{n}\frac{1}{R^{n}}\sup_{\Gamma(R)}\left|E\left(w\right)\right|.
\]
By Lemma \ref{lem: E curve lemma}, $\sup_{\Gamma(R)}\left|E\left(w\right)\right|\leq CE\left(R\right),$
so we obtain 
\[
\left|G^{\left(n\right)}\left(x\right)\right|\lesssim n!\left(Qm_{k}^{1/k}L(k)\right)^{n}\frac{E\left(R\right)}{R^{n}},
\]
provided that (\ref{eq:inclu Gamma in V-1}) holds. Finally we choose
\[
R=\max\left\{ L\left(k\right),\eta x\right\} ,
\]
and notice that 
\[
\left\{ \frac{x}{w}:w\in\Gamma(R)\right\} \subseteq\Omega_{\eta}.
\]
We have established
\[
\left|G^{\left(n\right)}\left(x\right)\right|\lesssim C^{n}\cdot n!\cdot m_{k}^{n/k}\max\left\{ E\left(L(k)\right),E\left(x\eta\right)\right\} ,
\]
as claimed.
\end{proof}
\begin{proof}[Proof of the inclusion $\mathcal{B}_{\gamma}D\left(M,\gamma;I\right)\subseteq A(M,\gamma,I)$
in the analytic case]
Without loss of generality, we assume that $I=[0,1)$ (otherwise
we re-scale). Let $\eta>1,$ $f\in C_{\infty}^{M\cdot\gamma}\left(\Omega_{\eta}\left(\gamma\right)\right)$,
and let $C_{4}>0$ be as in Claim \ref{claim:m_k reg first claim}.
by Lemma \ref{lem:aprox by analytic function-the analytic case}, for any closed $J\subseteq I$, 
there is a $Q>0$ and a sequence $\left(f_{k}\right)_{k\geq k_{0}}\subset C^{\infty}\left(J\right)$
so that $\sum_{k\geq k_{0}}f_{k}$ converges to $f$ in the $C^{\infty}\left(J\right)$
topology, and for any $k\geq k_{0},$ $f_{k}$ are analytic in $\left\{ z:\text{dist}\left(z,\text{\ensuremath{\Omega_{\eta}}}\mathbb{\left(\gamma\right)}\right)\leq\frac{1}{Q^{k}m_{k}^{1/k}L(k)}\right\} $
and satisfies therein $\max_{\Omega_{k,\eta,Q}}\left|f_{k}\right|\leq\frac{1}{C_{4}^{k}e^{3k}}$.
Notice that by Lemma \ref{lem:E exp bound}, for any $k$ sufficiently
big we have $E\left(L(k)\right)\leq e^{2k}.$ Thus, Lemma \ref{lem:Main estimate genrlize borel analytic}
yields
\[
\left|\left(\mathcal{B}_{\gamma}F_{k}\right)^{(n)}(x)\right|\lesssim\frac{1}{e^{k}C_{4}^{k}}C^{n}\cdot n!\cdot m_{k}^{n/k}E\left(x\eta\right),\quad\forall n\geq0,\,k\geq k_{0}.
\]
By the third assertion of Claim \ref{claim:m_k reg first claim},
\[
m_{k}^{n/k}\leq C_{4}^{k+n}m_{n}.
\]
We conclude that 
\[
\left|\left(\bbg F_{k}\right)^{\left(n\right)}\left(x\right)\right|\leq C^{n+1}\cdot C_{4}^{n}\cdot M_{n}E\left(x\eta\right).
\]
Since the right hand side is independent of $k$, we obtain
\[
\left|\left(\bbg f\right)^{\left(n\right)}\left(x\right)\right|\leq C^{n+1}\cdot C_{4}^{n}\cdot M_{n}E\left(x\eta\right).
\]
We have established the inclusion
\[
\mathcal{B}_{\gamma}C_{\infty}^{M\cdot\gamma}\left(\Omega_{\eta}\left(\gamma\right)\right)\subseteq A(M,\gamma,[0,\eta^{-1}]).
\]
Taking union over all $\eta>1$ yields $\mathcal{B}_{\gamma}D\left(M,\gamma;I\right)\subseteq A(M,\gamma,I).$
\end{proof}

\subsection{Estimates for the generalized Borel transform in $B^{+}\left(M\gamma,\widehat{\gamma},I\right)$}

Throughout this section, we fix an admissible $\gamma$ with non-analytic
$\widehat{\gamma}$ (i.e. $\lim_{\rho\to\infty}\varepsilon\left(\rho\right)=0)$,
a regular sequence $M=\left(n!m_{n}\right)_{n\geq 0}$ and put $J=[-1,1]$.
Recall that  $\gamma_{k}^{1/k}=L(k)$ and $\left(\widehat{\gamma}_{k}/k!\right)^{1/k}=\widehat{L}(k)$.
For a given $k\geq1$, $\eta>1$ and $Q>0$, the set $U_{k,\eta,Q}$
was defined as a $\frac{1}{QL(k)m_{k}^{1/k}}$ neighborhood of the
sectors
\[
\left\{ z:\left|\text{arg}(z)\right|\leq\frac{\eta}{\widehat{L}(k)},\,\quad\text{Re}\left(z\right)\in[0,1]\right\} \bigcup\left\{ z:\left|\text{arg}(-z)\right|\leq\frac{\eta}{\widehat{L}(k)},\,\quad\text{Re}\left(z\right)\in[-1,0]\right\} ,
\]
and that $S_{\mu}^{+}$ is the half disk $\left\{ z:\text{Im}z>0,\,|z|\leq\mu\right\} $.
Finally, put 
\[
U_{k,\eta,Q,\mu}^{+}=\left(\left\{ \left|\text{Re}\left(z\right)\right|\leq\mu\right\} \cap U_{k,\eta,Q}\right)\cup S_{\mu}^{+}.
\]

\begin{lem}
\label{lem:Main estimate genrlize borel class +}For $1<\eta$, $0<Q$
and $0<\mu<\frac{1}{2}$ there exists a $C>0$, such that if $g\in\text{Hol}\left(\overline{U_{k,\eta,Q,\mu}^{+}}\right)$
with $\Vert g\Vert_{L^{\infty}\left(U_{k,\eta,Q,\mu}^{+}\right)}\leq1,$
then $\bbg g\in\text{Hol}\left(\left\{ \mathcal{\text{Re}}\left(z\right)>0\right\} \right)\cap C^{\infty}\left(\left\{ \text{Re}\left(z\right)\geq0\right\} \right)$, 
\[
\left|\left(\bbg g\right)\left(z\right)\right|\lesssim E\left(\frac{2|z|}{\mu}\right),\quad\forall\mathcal{\mathcal{\text{Re}}}\left(z\right)\geq0,
\]
and
\[
\left|\left(\bbg g\right)^{(n)}\left(x\right)\right|\lesssim C^{n}\cdot n!\cdot m_{k}^{n/k}\max\left\{ E\left(L(k)\right),E\left(x\eta\right)\right\} \cdot E\left(\frac{2|x|}{\mu}\right),\quad\forall x\in\mathbb{R},\,n\geq0
\]
\end{lem}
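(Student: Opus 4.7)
The plan is to follow the strategy of Lemma \ref{lem:Main estimate genrlize borel}, adapted to the bounded domain $U_{k,\eta,Q,\mu}^{+}$. The starting point is the integral representation
\[
(\bbg g)(z) = \frac{1}{2\pi i} \int_\sigma g(w)\, E(z/w) \frac{dw}{w},
\]
valid for any positively oriented closed contour $\sigma \subset U_{k,\eta,Q,\mu}^{+}$ encircling the origin. Such contours exist because $U_{k,\eta,Q,\mu}^{+}$ contains an open neighborhood of $0$ (the sector part of $U_{k,\eta,Q}$ is thickened on both sides of the real axis, and $S_\mu^+$ is glued above); the interchange of summation and integration used to derive the formula is routine since $|w|$ is bounded below on $\sigma$ and $\gamma_{n+1}^{1/n}\to\infty$. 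The representation also shows that $\bbg g$ extends to an entire function of $z$, which confirms the claimed regularity.

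For the global bound I would take $\sigma$ to be a curve in $U_{k,\eta,Q,\mu}^{+}$ at distance $\ge \mu/2$ from $0$, for instance the upper semicircle $\{(\mu/2)e^{i\theta}:\theta\in[0,\pi]\}$ closed by a shallow arc just above $[-\mu/2,\mu/2]$. Since $E$ has positive Taylor coefficients, $|E(z/w)| \le E(|z|/|w|) \le E(2|z|/\mu)$ on $\sigma$, and the inequality $|(\bbg g)(z)| \lesssim E(2|z|/\mu)$ follows from $\|g\|_{L^\infty}\le 1$ and the bounded length of $\sigma$. For the derivative estimate at real $x>0$, I would use the change of variables $w\mapsto x/w$ to recover the formula
\[
(\bbg g)^{(n)}(x) = \frac{1}{2\pi i} \int_\Gamma g^{(n)}(x/w)\, E(w)\, \frac{dw}{w^{n+1}},
\]
as in Lemma \ref{lem:Main estimate genrlize borel}. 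The contour $\Gamma$ now consists of an inner part (the arc of radius $R = \max\{L(k),x\eta\}$ with aperture $\theta_R = \eta/\widehat L(L^{-1}(R))$ and two rays at angles $\pm\theta_R$, exactly as in the previous lemma) truncated at an outer radius $R' \sim \max\{R,\,2|x|/\mu\}$ and closed by an outer arc running through the lower half $w$-plane, so that its image under $w\mapsto x/w$ sweeps through $S_\mu^+$. The case $x<0$ proceeds by a reflected contour, while $x=0$ follows directly from Cauchy estimates on the Taylor coefficients of $g$.

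The inner portion of $\Gamma$ produces the factor $n!\cdot m_k^{n/k}\cdot\max\{E(L(k)),E(x\eta)\}$ exactly as in Lemma \ref{lem:Main estimate genrlize borel}, via Cauchy's estimate $|g^{(n)}(x/w)| \le n!(Qm_k^{1/k}L(k))^n$ on the sectorial part of $U_{k,\eta,Q}$ combined with the bound $\sup_{\Gamma}|E(w)| \lesssim E(R)$ from Lemma \ref{lem: E curve lemma}. The outer closing arc contributes an additional $E(2|x|/\mu)$ factor, via $|E(w)| \le E(R') \le E(2|x|/\mu)$ together with the Cauchy estimate $|g^{(n)}(x/w)| \le n!(C/\mu)^n$ coming from the $\mu/4$-scale analyticity available inside $S_\mu^+$. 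The main obstacle will be placing the outer arc so that $x/w$ remains at bounded distance from $\partial S_\mu^+$, ensuring that the Cauchy estimate there is uniform in the parameters; a careful choice of $R'$ and of the angular range of the outer arc---avoiding the two corners where $S_\mu^+$ meets the $U_{k,\eta,Q}$ thickening---resolves this, and the two contributions combine to the stated product bound (the additive $E(2|x|/\mu) \ge 1$ overhead from the outer arc is absorbed into a multiplicative factor on the inner contribution).
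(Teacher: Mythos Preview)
Your contour $\sigma$ for the global bound cannot do what you claim. The domain $U_{k,\eta,Q,\mu}^{+}$ is the upper half-disk $S_\mu^{+}$ together with only the thin piece $\{|\operatorname{Re}z|\le\mu\}\cap U_{k,\eta,Q}$; below the real axis this reaches only distance $O\bigl(1/(QL(k)m_k^{1/k})\bigr)$ from $[-\mu,\mu]$ near the origin (the sectors have opening $\eta/\widehat L(k)$ and the thickening has that width). Hence any closed contour in $U_{k,\eta,Q,\mu}^{+}$ that actually winds around $0$ must dip through this thin strip and pass within $O\bigl(1/(QL(k)m_k^{1/k})\bigr)$ of the origin; the minimum of $|w|$ along $\sigma$ is of that order, not $\mu/2$. (Your description ``upper semicircle \dots\ closed by a shallow arc just above $[-\mu/2,\mu/2]$'' in fact gives a contour lying entirely in the closed upper half-plane, which does not encircle $0$ at all.) With the correct minimum, the estimate $|E(z/w)|\le E(|z|/|w|)$ only yields $E\bigl(CQL(k)m_k^{1/k}|z|\bigr)$, which depends on $k$ and is far worse than the required $E(2|z|/\mu)$.

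The paper avoids this obstruction by never closing the contour in the $\zeta$-plane. It works with the dual variable $w$ (so $\zeta=z/w$) and uses the \emph{open} contour $\Gamma(R)$ of Lemma~\ref{lem:Main estimate genrlize borel}: an arc of radius $R$ together with two infinite rays at angles $\pm\theta_R$. Because $|E(w)|$ is bounded on those rays (Lemma~\ref{lem: E curve lemma}), the integral $\int_{\Gamma(R)} g(z/w)\,E(w)\,\tfrac{dw}{w}$ converges and equals $G(z)$ without any closing arc; the only requirement is $\{z/w:w\in\Gamma(R)\}\subset U_{k,\eta,Q,\mu}^{+}$, and the choice $R=(|z|+1)/\mu$ gives $|z/w|<\mu$ so that the image sits inside $S_\mu^{+}$ and the sector part. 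This is precisely what lets one exploit the large half-disk $S_\mu^{+}$ rather than the $1/(QL(k)m_k^{1/k})$-disk. You already perform the change of variables $w\mapsto x/w$ to reach this representation for the derivative estimate; use it from the outset for the global bound as well, and your outer-arc gymnastics become unnecessary.
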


\begin{proof}
Put $G=\mathcal{B}_{\gamma}g$. For $R>0$ put 
\[
\Gamma(R):=\left\{ Re^{i\theta}:\left|\theta\right|<\theta_{R}\right\} \cup\left\{ ue^{\pm i\theta_{R}}:u>R\right\} ,\quad\text{with\ensuremath{\quad}}\theta_{R}=\frac{\eta}{\widehat{L}\left(L^{-1}\left(R\right)\right)}.
\]
As in the proof of Lemma \ref{lem:Main estimate genrlize borel},
if
\[
\left\{ \frac{z}{w}:w\in\Gamma(R)\right\} \subset U_{k,\eta,Q,\mu}^{+},
\]
then 
\[
G\left(z\right)=\frac{1}{2\pi i}\int_{\Gamma(R)}g\left(\frac{z}{w}\right)E\left(w\right)\frac{dw}{w}.
\]
Thus, by Lemma \ref{lem: E curve lemma}, $\sup_{\Gamma(R)}\left|E\left(w\right)\right|\lesssim CE\left(R\right).$
Thus, the choice $R=\frac{|z|+1}{\mu}$ gives us 
\[
\left|G\left(z\right)\right|\lesssim E\left(\frac{|z|+1}{\mu}\right)\lesssim E\left(\frac{2|z|}{\mu}\right),
\]
for any $z\in\left\{ \text{Re}\left(z\right)\geq0\right\} .$ By Morera's
theorem, $G$ is analytic in $\left\{ \text{Re}\left(z\right)>0\right\} $.
Finally the inequality
\[
\left|G^{(n)}(x)\right|\leq C^{n}\cdot n!\cdot m_{k}^{n/k}\max\left\{ E\left(L(k)\right),E\left(\frac{2|x|}{\mu}\right)\right\} 
\]
is proven in the same way as in Lemma \ref{lem:Main estimate genrlize borel}.
\end{proof}
\begin{proof}[Proof of Lemma \ref{lem:intersection pm}]
Let $f_{+}\in B_{\eta}^{+}(M\gamma,\widehat{\gamma};0)$ and let
$C_{4}>0$ be as in Claim \ref{claim:m_k reg first claim}. By Lemma
\ref{lem:analytic aprox for =00005Cplus case} there are $Q>0$, $\mu>0$
and a sequence $\left(f_{k}\right)_{k\geq k_{0}}\subset C^{\infty}\left([-\mu,\mu]\right)$
so that $\sum_{k\geq k_{0}}f_{k}$ converges to $f_{+}$ in the $C^{\infty}\left([-\mu,\mu]\right)$
topology, and $f_{k}\in\text{Hol}\left(\overline{U_{k,\eta,Q,\mu}}\right)$
with $\max_{\overline{U_{k,\eta,Q,\mu}}}\left|f_{k}\right|\leq\min\left\{ \frac{1}{4^{k}C_{4}^{k}},\frac{1}{2^{k}}\right\} $
for any $k\geq k_{0}.$ By Lemma \ref{lem:Main estimate genrlize borel class +},
for each $k\geq k_{0}$, we have that $\bbg f_{k}\in\text{Hol}\left(\left\{ \text{Re}\left(z\right)>0\right\} \right)\cap C^{\infty}\left(\left\{ \text{Re}\left(z\right)\geq0\right\} \right)$. Therefore
\[
\left|\sum_{k\geq k_{0}}\left(\bbg f_{k}\right)\left(z\right)\right|\leq\sum_{k\geq k_{0}}\frac{C}{2^{k}}E\left(\frac{2|z|}{\mu}\right)\leq CE\left(\frac{2|z|}{\mu}\right),\quad\forall\text{Re}\left(z\right)\geq0.
\]
As in the proof of the inclusion $\bbg B_{1/\eta}(M\gamma,\widehat{\gamma};I)\subseteq A(M,\gamma,I)$
in the non-analytic case, we also have 
\[
\left|\sum_{k\geq k_{0}}\left(\bbg f_{k}\right)^{(n)}\left(x\right)\right|\lesssim C^{n}M_{n}E\left(\frac{2|x|}{\mu}\right),\quad\forall x\in\mathbb{R},n\geq0
\]
Observe that $\sum_{k\geq k_{0}}\bbg f_{k}$ and $\mathcal{B}_{\gamma}f_{+}$
are both elements of the quasianalytic class $C^{M}\left(\mathbb{R}\right)$
and have the same Taylor coefficients at the origin, thus by quasianalyticity
$\bbg f_{+}=\sum_{k\geq k_{0}}\bbg f_{k}.$ We have
established that for any $f_{+}\in B_{\eta}^{+}(M\gamma,\widehat{\gamma};0)$,
$\mathcal{B}_{\gamma}f_{+}\in\text{Hol}\left(\left\{ \text{Re}\left(z\right)>0\right\} \right)\cap C^{\infty}\left(\left\{ \text{Re}\left(z\right)\geq0\right\} \right)$
and there is a $\mu>0$ so that 
\[
\left|\left(\bbg f_{+}\right)\left(z\right)\right|\lesssim E\left(\frac{2|z|}{\mu}\right),\quad\forall\text{Re}\left(z\right)\geq0.
\]
Thus if $f\in B_{\eta}^{+}(M\gamma,\widehat{\gamma};0)\cap B_{\eta}^{-}(M\gamma,\widehat{\gamma};0),$
then by Morera's theorem, $\bbg f$ is entire and there is a $\mu>0$ such that
\[
\left|\left(\bbg f\right)\left(z\right)\right|\lesssim E\left(\frac{2|z|}{\mu}\right),\quad\forall z\in\mathbb{C}.
\]
By \cite[Theorem 7]{kiro2018taylor}, the latter implies that $f\in C^{\omega}(0)$
as claimed. 
\end{proof}

\section{Some Applications}

\subsection{Bijections onto Carleman classes. \label{subsec: log th}}

Let $\gamma$ be an admissible weight and let $M$ be a regular sequence.
If $M_{n}\gamma_{n+1}\lesssim C^{n}\widehat{\gamma}_{n}$ for a constant
$C>0$, then
\[
B_{1/\eta}(M\gamma,\widehat{\gamma};0_{+})=B_{\eta}(M\gamma,\widehat{\gamma};0_{+})=C_{\infty}^{M\gamma}\left(0_{+}\right).
\]
Therefore, in this case, the map
\begin{equation}
\lpg:A(M,\gamma;0_{+})\to C_{\infty}^{M\gamma}\left(0_{+}\right)\label{eq:L_gamma bijection}
\end{equation}
is \emph{onto}. Moreover, if in addition the class $C_{\infty}^{M\gamma}\left(0_{+}\right)$
is quasianalytic, then $\lpg$ is in fact a \emph{bijection}.

For example, if $M_{n}\approx n!\log^{n\alpha}\left(n\right)$ with
$0\leq\alpha\leq1,$ then (\ref{eq:L_gamma bijection}) is a bijection
provided that $xL^{\prime}(x)\lesssim\log^{-\alpha}x$, where $\gamma(x)=L^{x}(x)$.
The particular choice $\alpha=0$, i.e. $M_{n}=n!$ and $xL^{\prime}(x)\lesssim1$
is the aforementioned logarithmic threshold appearing in \cite[Corollary 1]{kiro2018taylor}.

It is also interesting to note that there are many ways to factor
a sequence $N$ as a product $M\gamma.$ For example, consider 
\[
M_{n}^{\alpha}\approx n!\log^{n\alpha}\left(n+e\right),\quad\text{and}\quad\gamma^{\alpha}(n)=\log^{n\alpha}\left(n+e\right).
\]
We have $M^{\alpha}\gamma^{1-\alpha}=M^{1}$, this implies  the following
family of bijections:
\[
\lpg_{\alpha}:A^{M^{1-\alpha}}(\gamma_{\alpha};I)\to C_{\infty}^{M^{1}}\left(I\right),\quad0<\alpha\leq1.
\]

\subsection{Multi-summability in Carleman classes}
While the $\gamma$-summation method is powerful, it is not universally applicable, particularly for functions in larger quasianalytic Carleman classes. Following the ideas of \'{E}calle and Ramis, we can overcome this limitation by iterating the summation process, a method known as multi-summability.

Let $\gamma$ be an admissible and put $M_{n}=n!\gamma\left(n+1\right).$
As we have already mentioned, a necessary and sufficient condition (up to regularity assumptions) for the $\left(\gamma\right)$--summation,
to sum all elements in the Carleman class $C^{M}(I)$ is that
$\gamma$ has at most logarithmic growth (i.e. $\gamma_{n}^{1/n}\lesssim\log n)$.
In this section, we will follow ideas by \'{E}calle \cite{ecalle1981fonctions}
and Ramis \cite{ramis1980series} in order to sum elements in larger
quasianalytic Carleman classes such as $M_{n}=n!\log^{n}\left(n+e\right)\log^{n}\log\left(n+e^{e}\right)$.

If $\gamma_{1}$, $\gamma_{2}$ are two admissible weights such that
$\gamma_{1}\cdot\gamma_{2}=\gamma$, then for a polynomial $P$, we
have 
\[
\lp_{\gamma_{1}}\lp_{\gamma_{2}}\bb_{\gamma}P=\lp_{\gamma}\bb_{\gamma}P=P,
\]
that is, $\lp_{\gamma_{1}}\lp_{\gamma_{2}}=\lp_{\gamma}$ for polynomials.
As it turns out, when we extend the domains of definition of these
two operators, the summation method $\lp_{\gamma_{1}}\lp_{\gamma_{2}}\bb_{\gamma}$
is always stronger than $\lp_{\gamma}\bb_{\gamma}$. Following \'{E}calle,
if $\lp_{\gamma_{1}}\lp_{\gamma_{2}}\bb_{\gamma}f=f$, we will say
that $f$ is $\left(\gamma_{1},\gamma_{2}\right)$\emph{--multi-summable}.
Similarly, given $k$ admissible functions $\gamma_{1},\cdots,\gamma_{k}$,
we say that $f$ is $\left(\gamma_{1},\cdots,\gamma_{k}\right)$\emph{--multi-summable}
if
\[
f=\lp_{\gamma_{k}}\cdots\lp_{\gamma_{1}}\bb_{\gamma_{1}\cdots\gamma_{k}}f.
\]

For example, consider 
\[
\gamma_{k}\left(n+1\right)=\log_{\left[k\right]}^{n}\left(n+\exp_{\left[k\right]}\left(1\right)\right),
\]
where $\log_{\left[k\right]}$ and $\exp_{\left[k\right]}$ are the
$k$-th iterates of the functions $x\mapsto\log x$ and $x\mapsto e^{x}$
respectively, and 
\[
M_{n}^{[k]}=n!\gamma_{1}\left(n+1\right)\gamma_{2}\left(n+1\right)\cdots\gamma_{k}\left(n+1\right),\quad M_{n}^{[0]}=n!.
\]
A simple computation yields 
\[
\widehat{\gamma}_{k}\left(n+1\right)\sim\left(\frac{2}{\pi}+o(1)\right)^{n}M_{n}^{[k]}.
\]
By Theorem \ref{thm:Main thm}, for any $\eta>1$ we have 
\[
B_{1/\eta}\left(M^{[k-1]}\gamma_{k},\widehat{\gamma}_{k};I\right)\subseteq\lp_{\gamma_{k}}A^{M^{[k-1]}}\text{\ensuremath{\left(\gamma_{k};I\right)}}\subseteq B_{\eta}\left(M^{[k-1]}\gamma_{k},\widehat{\gamma}_{k};I\right).
\]
Since 
\[
M_{n}^{[k-1]}\gamma_{k}\left(n+1\right)=M_{n}^{[k]}=\left(\frac{\pi}{2}+o(1)\right)^{n}\widehat{\gamma}_{k}\left(n+1\right),
\]
we have
\[
B_{1/\eta}\left(M^{[k-1]}\gamma_{k},\widehat{\gamma}_{k};I\right)=B_{\eta}\left(M^{[k-1]}\gamma_{k},\widehat{\gamma}_{k};I\right)=C_{\infty}^{M^{[k]}}\left(I\right).
\]
We conclude that 
\[
\bb_{\gamma_{k}}:C_{\infty}^{M^{[k]}}\left(I\right)\to A\left(M^{[k-1]},\gamma_{k};I\right)
\]
is a bijection with inverse $\lp_{\gamma_{k}}.$ By definition, $A\left(M^{[k-1]},\gamma_{k};I\right)\subseteq C_{\infty}^{M^{[k-1]}}\left(\mathbb{R}_{+}\right)$,
and therefore
\[
f=\lp_{\gamma_{k}}\bb_{\gamma_{k}}f=\lp_{\gamma_{k}}\lp_{\gamma_{k-1}}\bb_{\gamma_{k-1}}\bb_{\gamma_{k}}f=\cdots=\lp_{\gamma_{k}}\cdots\lp_{\gamma_{1}}\bb_{\gamma_{1}}\cdots\bb_{\gamma_{k}}f=\lp_{\gamma_{k}}\cdots\lp_{\gamma_{1}}\bb_{\gamma_{1}\cdots\gamma_{k}}f,
\]
for any $f\in C_{\infty}^{M^{[k]}}\left(I\right)$. We have established the following:
\begin{prop}
With the notation above, any function in $C_{\infty}^{M^{[k]}}\left(I\right)$
is $\left(\gamma_{1},\cdots,\gamma_{k}\right)$--multi-summable.
\end{prop}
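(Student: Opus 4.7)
The proof essentially requires peeling off one Borel--Laplace pair at a time by induction on $k$. The plan is to first show, for each $j$, that $\lp_{\gamma_j}\bb_{\gamma_j}$ is the identity on the Carleman class $C_{\infty}^{M^{[j]}}(I)$, and then compose these identities. Step one relies on the following growth matching: since $\gamma_j$ is admissible with $\widehat{\gamma}_j(n+1)\sim (2/\pi+o(1))^n M_n^{[j]}$ and $M_n^{[j]}=M_n^{[j-1]}\gamma_j(n+1)$, we have $M_n^{[j-1]}\gamma_j(n+1)\asymp_\eta \widehat{\gamma}_j(n+1)$ up to a multiplicative factor absorbed by the parameter $\eta>1$. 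Consequently, for every $\eta>1$,
\[
B_{1/\eta}\bigl(M^{[j-1]}\gamma_j,\widehat{\gamma}_j;I\bigr)=B_{\eta}\bigl(M^{[j-1]}\gamma_j,\widehat{\gamma}_j;I\bigr)=C_{\infty}^{M^{[j]}}(I).
\]

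Applying Theorem~\ref{thm:Main thm} (or its global variant Theorem~\ref{thm:Main thm non analytic}), both inclusions
\[
\bb_{\gamma_j} C_{\infty}^{M^{[j]}}(I)\subseteq A(M^{[j-1]},\gamma_j;I)\quad\text{and}\quad\lp_{\gamma_j} A(M^{[j-1]},\gamma_j;I)\subseteq C_{\infty}^{M^{[j]}}(I)
\]
hold, and since $C_{\infty}^{M^{[j]}}(I)$ is quasianalytic by Denjoy--Carleman (the sum $\sum M_n^{[j]}/M_{n+1}^{[j]}$ diverges because $\gamma_j$ is admissible, so $\sum 1/\gamma_j(n)^{1/n}=\infty$), the composition $\lp_{\gamma_j}\bb_{\gamma_j}$ is the identity on $C_{\infty}^{M^{[j]}}(I)$.

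With this, I would argue by downward induction on $j$, starting at $j=k$: given $f\in C_{\infty}^{M^{[k]}}(I)$, the previous step gives $f=\lp_{\gamma_k}\bb_{\gamma_k}f$ with $\bb_{\gamma_k}f\in A(M^{[k-1]},\gamma_k;I)\subseteq C_{\infty}^{M^{[k-1]}}(\mathbb{R}_+)$. Now $\bb_{\gamma_k}f$ lies in the next Carleman class down, so the same reasoning with $j=k-1$ gives $\bb_{\gamma_k}f=\lp_{\gamma_{k-1}}\bb_{\gamma_{k-1}}\bb_{\gamma_k}f$. Iterating,
\[
f=\lp_{\gamma_k}\lp_{\gamma_{k-1}}\cdots\lp_{\gamma_1}\bb_{\gamma_1}\cdots\bb_{\gamma_k}f.
\]
A short computation on formal power series (or on polynomials, which are dense in all relevant classes and quasianalyticity upgrades this) shows $\bb_{\gamma_1}\cdots\bb_{\gamma_k}=\bb_{\gamma_1\cdots\gamma_k}$: indeed, the coefficient of $x^n$ in $\bb_{\gamma_j}$ acts by division by $\gamma_j(n+1)$, and successive divisions collapse to one division by $\gamma_1(n+1)\cdots\gamma_k(n+1)=(\gamma_1\cdots\gamma_k)(n+1)$, yielding $\bb_{\gamma_1\cdots\gamma_k}$.

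The main obstacle is verifying that the hypothesis of Theorem~\ref{thm:Main thm} can be applied at each stage $j$, i.e.\ checking that $M^{[j-1]}$ is a regular sequence and that the product $M^{[j-1]}\gamma_j$ gives precisely the non-homogeneous Carleman class $C_{\infty}^{M^{[j]}}(I)$ after the identification $B_{1/\eta}=B_{\eta}$. Regularity of the $M^{[j]}$'s should follow by induction from the regularity of each $\gamma_i$ and the closure of regular sequences under products (each factor $\log_{[i]}^n(n+\exp_{[i]}(1))$ is log-convex eventually, has non-decreasing $n$-th root, and satisfies the near-multiplicativity condition~3). The asymptotic $\widehat{\gamma}_j(n+1)\sim(2/\pi+o(1))^n M_n^{[j]}$ is a direct computation from the definition $\widehat{\gamma}_n=\sup_\rho \rho^n|\gamma(i\rho)|$ for $\gamma_j(s)=\log_{[j]}^s(s+\exp_{[j]}(1))$, and is the key quantitative ingredient that collapses the non-homogeneous two-sequence control to a single Carleman class.
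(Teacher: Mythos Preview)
Your proposal is correct and follows essentially the same route as the paper: use the matching $\widehat{\gamma}_j(n+1)\sim(2/\pi+o(1))^n M_n^{[j]}$ to collapse the $B_{1/\eta}$ and $B_\eta$ classes to $C_\infty^{M^{[j]}}(I)$, apply Theorem~\ref{thm:Main thm} to get that $\bb_{\gamma_j}:C_\infty^{M^{[j]}}(I)\to A(M^{[j-1]},\gamma_j;I)$ is a bijection with inverse $\lp_{\gamma_j}$, and then iterate using $A(M^{[j-1]},\gamma_j;I)\subseteq C_\infty^{M^{[j-1]}}(\mathbb{R}_+)$ together with $\bb_{\gamma_1}\cdots\bb_{\gamma_k}=\bb_{\gamma_1\cdots\gamma_k}$. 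The paper presents the argument for the top index $k$ and then iterates exactly as you describe; your added remarks on regularity of the $M^{[j]}$ and on the formal identity for composed Borel transforms are details the paper leaves implicit.
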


The above idea can be generalized further: given a sequence $M$ such
that $\sum M_{n}/M_{n+1}=\infty$, define the sequences $M^{(k)}$
and $\gamma_{k}$ by the following relations
\[
\gamma_{1}\left(n+1\right)^{\frac{1}{n}}=\prod_{j=1}^{n}\left(1-\frac{M_{n}}{M_{n+1}}\right)^{-1},\quad M=M^{(1)}\gamma_{1},
\]
\[
\gamma_{k}\left(n+1\right)^{\frac{1}{n}}=\prod_{j=1}^{n}\left(1-\frac{M_{n}^{(k-1)}}{M_{n+1}^{(k-1)}}\right)^{-1},\quad M^{\left(k-1\right)}=M^{(k)}\gamma_{k}.
\]
Note that if $M_{n}=n!\gamma(n+1)$, with admissible $\gamma$, then
$\gamma_{1},\cdots,\gamma_{k}$ are all admissible, and that by Theorem  \ref{thm:Main thm}, the maps 
\[
\bb_{\gamma_{k}}:C_{\infty}^{M^{(k-1)}}\left(I\right)\to A\left(M^{[k-1]},\gamma_{k};I\right),\quad M^{(0)}=M
\]
are bijections. So, the same reasoning yields that any $f\in C_{\infty}^{M}\left(I\right)$
is $\left(\gamma_{k},\cdots,\gamma_{1}\right)$-multi-summable.

\subsection{Resurgent theory}

Resurgent theory was introduced by \'{E}calle to extend the classical
Borel-Laplace summation by allowing singularities in the Laplace plane.
In resurgent theory, we do not associate to $\bb\lp f$ a formal power
series, but rather a more complicated object called a transseries
(see \cite{ecalle1981fonctions}). In recent years resurgent theory has
found many applications in models related to QFT and other fields
of mathematical physics. Central to these applications are functions
$f$ having a transseries asymptotic expansion of the form
\begin{equation}
f(x)\sim\sum_{j=0}^{N}\sum_{n=0}^{\infty}e^{-\frac{a_{j}}{x}}x^{n}c_{n,j},\,\,\text{as}x\to0^{+}\quad0\leq a_{0}<a_{1}<\cdots<a_{N}.\label{eq:Transseries}
\end{equation}
In the classical Borel--Laplace summation, these expansions arise
from the discontinuity of the function $\mathcal{B}f$ at the points $\left(a_{j}\right)$.
Indeed, for $a\geq0$, consider the shift operator 
\[
\left(\tau_{a}F\right)(x)=\begin{cases}
F(x-a), & x\geq a\\
0, & x<a,
\end{cases}
\]
and observe that 
\[
\mathcal{L}(\tau_{a}F)(x)=\frac{1}{x}\int_{a}^{\infty}F(t-a)e^{-t/x}dt=\frac{e^{-\frac{a}{x}}}{x}\int_{0}^{\infty}F(u)e^{-u/x}du=e^{-\frac{a}{x}}\cdot\left(\left(\mathcal{L}F\right)(x)\right).
\]
Therefore, if 
\[
F=F_{0}+\cdots+F_{N}\in\left(\tau_{a_{0}}A\left(M,\gamma;[0,1)\right)+\cdots+\tau_{a_{0}}A\left(M,\gamma;[0,1)\right)\right),
\]
then by Theorem \ref{thm:Main thm analytic}, 
\[
\mathcal{L}F\in\left(e^{-\frac{a_{0}}{x}}D\left(M,\Gamma;[0,1)\right)+\cdots+e^{-\frac{a_{N}}{x}}D\left(M,\Gamma;[0,1)\right)\right).
\]
Thus, the function $\mathcal{L}F$ has an expansion of the form (\ref{eq:Transseries})
with 
\[
c_{n,j}=F_{j}^{(n)}\left(a_{j}\right).
\]

The application of Theorem \ref{thm:Main thm analytic} gives us the following
proposition: 
\begin{prop}
\label{prop:Resurgent}Let $0\leq a_{0}<a_{1}<\cdots<a_{N}$ and let
$M$ be a regular sequence so that $\sum M_{n}/M_{n+1}=\infty.$ The
Laplace transform 
\[
\mathcal{L}:\tau_{a_{0}}A\left(M,\gamma;[0,1)\right)\oplus\cdots\oplus\tau_{a_{N}}A\left(M,\gamma;[0,1)\right)\to e^{-\frac{a_{0}}{x}}D\left(M,\Gamma;[0,1)\right)\oplus\cdots\oplus e^{-\frac{a_{N}}{x}}D\left(M,\Gamma;[0,1)\right)
\]
is a \emph{bijection}.
\end{prop}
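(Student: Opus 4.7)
The plan is to reduce Proposition~\ref{prop:Resurgent} to three ingredients already at our disposal: (i) Theorem~\ref{thm:Main thm analytic}, which provides the bijection $\lpg:A(M,\gamma;[0,1))\to D(M,\gamma;[0,1))$ with inverse $\bbg$ whenever $\sum M_{n}/M_{n+1}=\infty$; (ii) the shift identity $\mathcal{L}(\tau_{a}F)(x)=e^{-a/x}\mathcal{L}(F)(x)$ derived in the preamble of the proposition; and (iii) the directness of both the source and target decompositions, which will come from the quasianalyticity of $A(M,\gamma;[0,1))$ and of $D(M,\gamma;[0,1))$ (the latter is also asserted in Theorem~\ref{thm:Main thm analytic}).

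Combining (i) and (ii) immediately yields, for each index $j$, a bijection $\mathcal{L}:\tau_{a_{j}}A(M,\gamma;[0,1))\to e^{-a_{j}/x}D(M,\gamma;[0,1))$ whose inverse sends $g$ to $\tau_{a_{j}}\bbg(e^{a_{j}/x}g)$. Once both direct-sum decompositions are justified, the bijection of the two direct sums follows at once by assembling these componentwise inverses: surjectivity is obtained by decomposing and inverting termwise, while injectivity is componentwise injectivity combined with directness.

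For the source-side directness, suppose $\sum_{j=0}^{N}\tau_{a_{j}}F_{j}\equiv 0$ with $F_{j}\in A(M,\gamma;[0,1))$. On the interval $[a_{0},a_{1})$ only the $j=0$ summand is nonzero, so $F_{0}\equiv 0$ on $[0,a_{1}-a_{0})$; the quasianalyticity of $A(M,\gamma;[0,1))$ then forces $F_{0}\equiv 0$ globally, and iteration handles the remaining summands.

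The target-side directness is the main (though not deep) technical step, and it is the place where the hypothesis $\sum M_{n}/M_{n+1}=\infty$ is genuinely used. Assume $\sum_{j=0}^{N}e^{-a_{j}/x}g_{j}\equiv 0$ with $g_{j}\in D(M,\gamma;[0,1))$. Multiplying by $e^{a_{0}/x}$ yields
\[
g_{0}(x)=-\sum_{j=1}^{N}e^{-(a_{j}-a_{0})/x}g_{j}(x),\qquad x>0.
\]
Each $g_{j}$ is $C^{\infty}$ at the origin (by definition of $D(M,\gamma;[0,1))$), and $a_{j}-a_{0}>0$ for $j\geq 1$, so the right-hand side is flat at $0$; hence every Taylor coefficient of $g_{0}$ at $0$ vanishes, and quasianalyticity of $D(M,\gamma;[0,1))$ gives $g_{0}\equiv 0$. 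Iterating the same argument on the remaining sum $\sum_{j\geq 1}e^{-a_{j}/x}g_{j}\equiv 0$ shows $g_{j}\equiv 0$ for all $j$, completing the proof.
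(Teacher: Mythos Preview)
Your proof is correct and follows essentially the same route as the paper: componentwise bijections from Theorem~\ref{thm:Main thm analytic} together with the shift identity, plus directness of both decompositions established by peeling off the lowest-index term via quasianalyticity. The only cosmetic difference is that the paper phrases directness as ``the total sum uniquely determines each summand'' rather than ``zero sum forces all summands to vanish,'' but these are equivalent.
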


\begin{proof}
Since $\mathcal{L}\text{\ensuremath{\tau_{a}F}}=e^{-\frac{a}{x}}\cdot\mathcal{L}F,$
Theorem \ref{thm:Main thm analytic} yields that $\mathcal{L}:\tau_{a_{j}}A\left(M,\gamma;[0,1)\right)\to e^{-\frac{a_{j}}{x}}D\left(M,\Gamma;[0,1)\right)$
is a bijection. So, we only need to show that the two sums appearing
in the statement are direct. Let $F=F_{0}+\cdots+F_{N}\in\left(\tau_{a_{0}}A\left(M,\gamma;[0,1)\right)+\cdots+\tau_{a_{0}}A\left(M,\gamma;[0,1)\right)\right),$
and observe that 
\[
F_{0}^{(n)}(a_{0})=F^{(n)}(a_{0}),
\]
and
\[
F_{j}^{(n)}(a_{j})=F^{(n)}(a_{j})-F_{0}^{(n)}(a_{j})-\cdots-F_{j-1}^{(n)}(a_{j})
\]
Thus, by quasianalyticity, first $F$ recursively determines $F_{0}, F_1,\cdots$ uniquely. We conclude that the sum
$A\left(M,\gamma;[0,1)\right)+\cdots+\tau_{a_{N}}A\left(M,\gamma;[0,1)\right)$
is indeed direct. Similarly if $f=f_{0}+\cdots f_{N}\in\left(e^{-\frac{a_{0}}{x}}D\left(M,\Gamma;[0,1)\right)+\cdots+e^{-\frac{a_{N}}{x}}D\left(M,\Gamma;[0,1)\right)\right)$
then $e^{\frac{a_{0}}{x}}f(x)$ and $e^{\frac{a_{0}}{x}}f_{0}(x)$
have the same power series expansion at the origin, so $f$ uniquely determines
$f_{0}$. Continuing in the same way, we conclude that the
second sum is also direct. 
\end{proof}

\subsection{Euler type differential equations}

In this section we fix an admissible weight $\gamma,$ $I=\left[0,1\right)$
and $\eta>1$. Consider the operator $V=V_{\gamma}:\mathbb{C}\left[x\right]\to\mathbb{C}\left[x\right]$,
defined by 
\[
Vx^{n}=\frac{\gamma(n+2)}{\gamma(n+1)}x^{n+1}.
\]
For example, if $\gamma=\Gamma$, the Euler Gamma function, then $Vf(x)=x^{2}f^{\prime}(x)$.
Another example is 
\[
\gamma(n+1)=\frac{\Gamma(1)}{\Gamma(1-a)}\cdots\frac{\Gamma(n+1)}{\Gamma(n+1-a)},\quad0<a<1,
\]
so $V$ is a fractional derivative operator:
\[
Vf(x)=x^{1+a}\frac{d^{a}}{dx^{a}}f(x).
\]

\begin{prop}
Let $\gamma$ be admissible and $M$ be a regular quasianalytic sequence.
If $g\in B_{1/\eta}\left(M\gamma,\widehat{\gamma};0_{+}\right)$, and
$P$ is a polynomial without zeros in $[0,\infty)$, then the equation
\begin{equation}
P\left(V_{\gamma}\right)f=g\label{eq: Euler type}
\end{equation}
has a solution $f=\lp_{\gamma}\left(\frac{\bbg g}{P}\right)\in B_{\eta}\left(M\gamma,\widehat{\gamma};I\right)$.
\end{prop}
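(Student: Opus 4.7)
The plan is to solve the equation on the Borel side, where $V_\gamma$ acts as multiplication by $t$. A direct computation on monomials gives
\[
\bbg(V_\gamma x^n) = \bbg\!\left(\tfrac{\gamma(n+2)}{\gamma(n+1)}\, x^{n+1}\right) = \tfrac{t^{n+1}}{\gamma(n+1)} = t\cdot\bbg(x^n),
\]
so $\bbg\circ P(V_\gamma) = P(t)\cdot\bbg$ on polynomials. This motivates extending $P(V_\gamma)$ to smooth functions by $P(V_\gamma)h := \lpg(P(t)\,\bbg h)$; the extension is consistent on polynomials and well defined whenever $\bbg h \in A(M,\gamma;0_+)$, since $A(M,\gamma;0_+)$ is trivially preserved by multiplication by polynomials (absorb the extra factor of $n$ arising from differentiating $t^k$ into $M_n$).

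With this convention, I set $G := \bbg g$, $F := G/P$, and $f := \lpg F$. Theorem \ref{thm:Main thm} applied to $g$ gives $G \in A(M,\gamma;0_+)$ and the applicability identity $\lpg G = g$. The main technical step is to verify that $F \in A(M,\gamma;0_+)$. Since $P$ is zero-free on $[0,\infty)$, the function $1/P$ extends holomorphically to a complex neighborhood of $[0,\infty)$, and Cauchy's estimates yield a uniform bound
\[
\bigl|(1/P)^{(k)}(x)\bigr| \le C_P^{\,k+1}\,k!,\qquad x \ge 0.
\]
Combining this with $|G^{(k)}(x)| \le C^{k+1} M_k E(\eta x/b)$ via the Leibniz rule and the identity $\binom{n}{k} M_k (n-k)! = n!\,m_k$ (recall $M_n = n!\,m_n$), one obtains
\[
|F^{(n)}(x)| \le C'^{\,n+1} E(\eta x/b)\, n!\sum_{k=0}^n m_k \le C''^{\,n+1} M_n E(\eta x/b),
\]
where the final inequality uses that $m_k \le m_n$ for $k \le n$, a consequence of the regularity of $M$ (eventually $m_k^{1/k}$ is non-decreasing). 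Hence $F \in A(M,\gamma;0_+)$.

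A second application of Theorem \ref{thm:Main thm}, now to $F$, gives $f = \lpg F \in B_\eta(M\gamma,\widehat\gamma;0_+)$, and by construction $\bbg f = F = G/P$. The equation then follows algebraically:
\[
P(V_\gamma) f = \lpg(P(t)\,\bbg f) = \lpg\bigl(P(t)\cdot G/P(t)\bigr) = \lpg G = g.
\]
The only genuinely non-trivial ingredient is the closure of $A(M,\gamma;0_+)$ under division by $P$; everything else is formal manipulation on the Borel side combined with Theorem \ref{thm:Main thm}.
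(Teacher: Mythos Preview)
Your proof is correct and follows essentially the same route as the paper: conjugate $P(V_\gamma)$ by $\bbg$ to reduce to multiplication by $P$, use Theorem~\ref{thm:Main thm} to place $\bbg g$ in $A(M,\gamma;0_+)$, verify via Cauchy estimates and Leibniz that $\tfrac{1}{P}\bbg g\in A(M,\gamma;0_+)$, and apply Theorem~\ref{thm:Main thm} once more to land back in $B_\eta$. The only cosmetic difference is that the paper phrases the final step as ``$f$ is a formal solution and the target class is quasianalytic, hence an actual solution,'' whereas you make the algebra explicit by first \emph{defining} $P(V_\gamma)h:=\lpg(P\cdot\bbg h)$; both amount to the same thing.
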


\begin{proof}
Since 
\[
\bb_{\gamma}V\left(x^{n}\right)=\bbg\left(\frac{\gamma(n+2)}{\gamma(n+1)}x^{n+1}\right)=\frac{x^{n+1}}{\gamma(n+1)}=x\bb_{\gamma}\left(x^{n}\right),
\]
we have (at least formally), 
\[
\bb_{\gamma}\left(P\left(V\right)f\right)=P\cdot\bb_{\gamma}f.
\]
By Theorem \ref{thm:Main thm}, 
\[
\bb_{\gamma}g\in A\left(M,\gamma;I\right).
\]
Since $P$ has no zeros in $[0,\infty)$, we have 
\[
\left\Vert \frac{d^{n}}{dx^{n}}\frac{1}{P}\right\Vert _{L^{\infty}\left(\mathbb{R}_{+}\right)}\leq C^{n}n!.
\]
\end{proof}
Thus $\frac{1}{P}\in A\left(M,\gamma;I\right),$ and 
\[
\frac{1}{P}\bb_{\gamma}g\in A\left(M,\gamma;I\right).
\]
By the same theorem, we also have 
\[
f:=\lp_{\gamma}\left(\frac{\mathcal{B}_{\gamma}g}{P}\right)\in\lpg A\left(M,\gamma;I\right)\subset B_{\eta}\left(M\gamma,\widehat{\gamma};I\right).
\]
Since $f$ is a formal solution of the equation \ref{eq: Euler type},
and the class $\lpg A\left(M,\gamma;I\right)$ is quasianalytic,
the function $f$ is in fact a solution to \ref{eq: Euler type} as
claimed. 
\begin{rem}
The assumption that $P$ is a polynomial without zeros on the positive ray can be weakened. The same statement is true for any function $P$
such that $\frac{1}{P}\in A\left(M,\gamma;\mathbb{R}_{+}\right).$
\end{rem}

\appendix

\section{Lemmas on the functions $K$ and $E$}
\begin{proof}[Proof of Lemma \ref{lem:three E inq}]
By Theorems A and B
\[
\log E(r)\sim\log\frac{1}{K(r)}\sim\rho\varepsilon(\rho)
\]
where $\rho=\rho(r)$ is the solution of $r=L(\rho)e^{\varepsilon(\rho)}.$
Thus, for any $\delta_{1}\in(0,1)$, it is sufficient to find a $\delta>0$
such that 
\[
\rho(\delta r)\varepsilon\left(\rho(\delta r)\right)+\rho(\eta r)\varepsilon\left(\rho(\eta r)\right)-\rho(r)\varepsilon\left(\rho(r)\right)\lesssim1.
\]
Thus, it is sufficient to show that there exists $\alpha>0$ (independent
of $\eta$), such that 
\[
\rho\left(\eta r\right)\varepsilon\left(\rho\left(\eta r\right)\right)\leq\eta^{\alpha}\rho(r)\varepsilon\left(\rho(r)\right).
\]
Let us prove the last assertion.
\begin{align*}
\log\frac{\rho(r)\varepsilon\left(\rho(r)\right)}{\rho(\eta r)\varepsilon\left(\rho(\eta \rho\right)} & =\int_{\eta r}^{r}\frac{\varepsilon\left(\rho(u)\right)+\rho(u)\varepsilon'\left(\rho(u)\right)}{\rho(u)\varepsilon\left(\rho(u)\right)}\rho'(u)du\\
& =(1+o(1))\int_{\eta r}^{r}\frac{\rho'(u)}{\rho(u)}du,\quad r\to\infty,
\end{align*}
where in the last equality, we have used $\rho|\varepsilon'(\rho)|=o\left(\varepsilon(\rho)\right)$.
Differentiating $\log r=\log L(\rho)+\varepsilon(\rho)$ yields 
\[
\frac{\rho'(r)}{\rho(r)}=\frac{1}{r}\cdot\frac{1}{\varepsilon\left(\rho(r)\right)+\rho(r)\varepsilon'\left(\rho(r)\right)}=\frac{1}{r\varepsilon\left(\rho(r)\right)}\left(1+o(1)\right).
\]
So
\[
\log\frac{\rho(r)\varepsilon\left(\rho(r)\right)}{\rho(\eta r)\varepsilon\left(\rho\left(\eta r\right)\right)}=(1+o(1))\int_{\eta r}^{r}\frac{1}{\varepsilon\left(\rho(u)\right)}\frac{du}{u}.
\]
Since $\varepsilon$ is bounded above and positive, there exists
$\alpha>0$, such that
\[
\log\frac{\rho(r)\varepsilon\left(\rho(r)\right)}{\rho(\eta r)\varepsilon\left(\rho\left(\eta r\right)\right)}>\alpha\log\frac{1}{1-\delta}.
\]
The last inequality completes the proof.
\end{proof}
\begin{proof}[Proof of Lemma \ref{lem: E curve lemma}]
Let $\delta>0,$ we assume that $z=re^{i\psi}$, $s=\rho e^{i\theta}$,
are related through the saddle-point equation
\[
\log z=\log L(s)+\varepsilon(s).
\]
Comparing the real and imaginary parts of the saddle-point equation
and making use of (\ref{eq:gamma asymp full}), we find that
\[
r=L(\rho)e^{\varepsilon(\rho)}(1+o(1)),\quad\theta\varepsilon(\rho)(1+o(1))=\psi,\quad r\to\infty.
\]
By Theorem B, $\left|zE(z)\right|=O(1)$, as $|z|\to\infty$, uniformly
in the set $\mathbb{C}\setminus\Omega\left(\tfrac{\pi}{2}+\delta\right)$.
Since, 
\[
\partial\Omega\left(\eta\tfrac{\pi}{2}\right):=\left\{ z\;:\;\theta=\pm\left(\frac{\pi}{2}+\delta\right),\;\rho>\rho_{0}\right\} ,
\]
it suffices to show that
\[
\eta\varepsilon\left(L^{-1}\left(r\right)\right)\geq\varepsilon\left(\rho\right)
\]
for $r$ sufficiently large.

If $\lim_{\rho\to\infty}\varepsilon(\rho)>0$, the statement
is trivial because $\eta>1$. Otherwise, $\lim_{\rho\to\infty}\varepsilon(\rho)=0$
and $\varepsilon$ is eventually non--increasing.

Write 
\[
\log\frac{\varepsilon(\rho)}{\varepsilon\left(L^{-1}(r)\right)}=-\int_{\rho}^{\rho e^{\varepsilon\left(L^{-1}(\rho)\right)}(1+o(1))}\frac{\varepsilon'(u)}{\varepsilon(u)}du.
\]
By assumption $-\frac{\varepsilon'(u)}{\varepsilon(u)}\leq\frac{1}{u}$
for $u$ sufficiently large, thus
\[
\log\frac{\varepsilon(\rho)}{\varepsilon\left(L^{-1}(r)\right)}\leq\varepsilon\left(L^{-1}(\rho)\right)\to_{r\to\infty}0,
\]
which completes the proof of Lemma \ref{lem: E curve lemma}.
\end{proof}
\begin{proof}[Proof of Lemma \ref{lem:E exp bound}]
By Theorem B,
\[
\log\left(rE(r)\right)\sim s\varepsilon(s)
\]
where $L(s)e^{\varepsilon(s)}=r.$ Put $r=L(k).$ Since $\varepsilon$
is non-negative and slowly varying, $L$ and $s\mapsto s\varepsilon(s)$
are eventually non-decreasing. It follows from $L(s)e^{\varepsilon(s)}=L(k)$, that
$s\leq k$, and therefore
\[
\log\left(rE(r)\right)\leq k\varepsilon\left(k\right)\left(1+o\left(1\right)\right).
\]
By assumption, $\lim_{k\to\infty}\varepsilon(k)<2$, which implies
\[
rE(r)\leq Ce^{2k},
\]
as claimed.

\end{proof}
\bibliographystyle{plain}
\nocite{*}
\bibliography{bib}

@article {KIROSODIN,
    AUTHOR = {Kiro, A. and Sodin, M.},
     TITLE = {On functions {$K$} and {$E$} generated by a sequence of
              moments},
   JOURNAL = {Expo. Math.},
  FJOURNAL = {Expositiones Mathematicae},
    VOLUME = {35},
      YEAR = {2017},
    NUMBER = {4},
     PAGES = {443--477},
}

@article{mandelbrojt1942classes,
  title={Classes of infinitely differentiable functions},
  author={Mandelbrojt, S.},
  journal={Rice Institute Pamphlet-Rice University Studies},
  volume={29},
  number={1},
  year={1942},
  publisher={Rice University}
}

@article{moroz1990summability,
  title={Summability method for a Horn-Shaped region},
  author={Moroz, A.},
  journal={Communications in mathematical physics},
  volume={133},
  number={2},
  pages={369--382},
  year={1990},
  publisher={Springer}
}

@article{nazarov2004lower,
  title={Lower bounds for quasianalytic functions, I. How to control smooth functions},
  author={Nazarov, F. and Sodin, M. and Volberg, A.},
  journal={Mathematica Scandinavica},
  pages={59--79},
  year={2004},
  publisher={JSTOR}
}

@book{seneta2006regularly,
  title={Regularly varying functions},
  author={Seneta, E.},
  volume={508},
  year={2006},
  publisher={Springer}
}

@article{moroz1990novel,
  title={Novel summability methods generalizing the Borel method},
  author={Moroz, A.},
  journal={Czechoslovak Journal of Physics},
  volume={40},
  number={7},
  pages={705--726},
  year={1990},
  publisher={Springer}
}

@inproceedings{korenblum1966non,
  title={Non-triviality conditions for certain classes of functions analytic in an angle and problem of quasianalyticity},
  author={Korenblum, B.},
  booktitle={Doklady Akademii Nauk},
  volume={166},
  number={5},
  pages={1046--1049},
  year={1966},
  organization={Russian Academy of Sciences}
}

@book{bingham1989regular,
  title={Regular variation},
  author={Bingham, N. and Goldie, C. and Teugels, J.},
  volume={27},
  year={1989},
  publisher={Cambridge university press}
}

@article{kiro2018taylor,
  title={On Taylor coefficients of smooth functions},
  author={Kiro, A.},
  journal={Accepted for publication in Journal d'Analyse Mathematique},
  year={2019}
}

@article{buhovsky2019power,
  title={Power substitution in quasianalytic Carleman classes},
  author={Buhovsky, L. and Kiro, A. and Sodin, S.},
  journal={Israel Journal of Mathematics},
  pages={1--12},
  year={2019},
  publisher={Springer}
}

@book{beurling1989collected,
  title={The collected works of Arne Beurling: Complex analysis, edited by L. Carleson,  P. Malliavin, and J. Neuberger and J. Wermer},
  author={Beurling, A.},
  volume={1},
  year={1989},
  publisher={Birkhauser}
}

@book{carleman1926fonctions,
  title={Les Fonctions quasi analytiques: le{\c{c}}ons profess{\'e}es au College de France},
  author={Carleman, T.},
  year={1926},
  publisher={Gauthier-Villars et Cie}
}

@article{carleson1961universal,
  title={On universal moment problems},
  author={Carleson, L.},
  journal={Mathematica Scandinavica},
  volume={9},
  number={1b},
  pages={197--206},
  year={1961},
  publisher={JSTOR}
}

@book{costin,
  title={Asymptotics and Borel summability},
  author={Costin, Ovidiu},
  year={2008},
  publisher={Chapman and Hall/CRC}
}

@book{nevanlinna1918theorie,
  title={Zur Theorie der asymptotischen Potenzreihen I. II.},
  author={Nevanlinna, F.},
  year={1918},
  publisher={Suomalaisen tiedeakademian kustantama}
}

@article{dyn1976pseudoanalytic,
  title={Pseudoanalytic continuation of smooth functions. Uniform scale},
  author={Dynkin, E. M. },
  journal={Mathematical programming and related questions (Proc. Seventh Winter School, Drogobych, 1974), Theory of functions and functional analysis (Russian)},
  pages={40--73},
  year={1976}
}

@book{ehrenpreis2011fourier,
  title={Fourier analysis in several complex variables},
  author={Ehrenpreis, L.},
  year={2011},
  publisher={Courier Corporation}
}

@book{hardy2000divergent,
  title={Divergent series},
  author={Hardy, G. H.},
  volume={334},
  year={2000},
  publisher={American Mathematical Soc.}
}

@article{sokal1980improvement,
  title={An improvement of Watson's theorem on Borel summability},
  author={Sokal, A. D.},
  journal={Journal of Mathematical Physics},
  volume={21},
  number={2},
  pages={261--263},
  year={1980},
  publisher={American Institute of Physics}
}

@book{goldberg2008value,
  title={Value distribution of meromorphic functions},
  author={Goldberg, A. and Ostrovskii, I.},
  volume={236},
  year={2008},
  publisher={American Mathematical Soc.}
}

@article{lastra2015summability,
  title={Summability in general Carleman ultraholomorphic classes},
  author={Lastra, A. and Malek, S. and Sanz, J.},
  journal={Journal of Mathematical Analysis and Applications},
  volume={430},
  number={2},
  pages={1175--1206},
  year={2015},
  publisher={Elsevier}
}

@article{michalik2012multisummability,
  title={Multisummability of formal solutions of inhomogeneous linear partial differential equations with constant coefficients},
  author={Michalik, S.},
  journal={Journal of dynamical and control systems},
  volume={18},
  number={1},
  pages={103},
  year={2012},
  publisher={Citeseer}
}

@article{michalik2019stokes,
  title={The Stokes phenomenon for some moment partial differential equations},
  author={Michalik, S. and Tkacz, B.},
  journal={Journal of Dynamical and Control Systems},
  volume={25},
  number={4},
  pages={573--598},
  year={2019},
  publisher={Springer}
}

@article{balser2010gevrey,
  title={Gevrey order of formal power series solutions of inhomogeneous partial differential equations with constant coefficients},
  author={Balser, W. and Yoshino, M.},
  journal={Funkcialaj Ekvacioj},
  volume={53},
  number={3},
  pages={411--434},
  year={2010},
  publisher={Division of Functional Equations, The Mathematical Society of Japan}
}

@article{balser1997moment,
  title={Moment methods and formal power series},
  author={Balser, W.},
  journal={Journal de math{\'e}matiques pures et appliqu{\'e}es},
  volume={76},
  number={3},
  pages={289--305},
  year={1997},
  publisher={Elsevier Masson}
}

@incollection{sanz2017asymptotic,
  title={Asymptotic analysis and summability of formal power series},
  author={Sanz, J.},
  booktitle={Analytic, Algebraic and Geometric Aspects of Differential Equations},
  pages={199--262},
  year={2017},
  publisher={Springer}
}

@book{ramis1993series,
  title={S{\'e}ries divergentes et th{\'e}ories asymptotiques},
  author={Ramis, J.-P.},
  volume={121},
  year={1993},
  publisher={Soci{\'e}t{\'e} math{\'e}matique de France}
}

@article{carleson1952sets,
  title={Sets of uniqueness for functions regular in the unit circle},
  author={Carleson, L.},
  journal={Acta Mathematica},
  volume={87},
  pages={325--345},
  year={1952},
  publisher={Institut Mittag-Leffler}
}

@inproceedings{korenblum1965quasianalytic,
  title={Quasianalytic classes of functions in a circle},
  author={Korenblum, B.},
  booktitle={Doklady Akademii Nauk},
  volume={164},
  number={1},
  pages={36--39},
  year={1965},
  organization={Russian Academy of Sciences}
}

@article{salinas1955funciones,
  title={Funciones con momentos nulos},
  author={Salinas, B.R.},
  journal={Rev. Acad. Ci. Madrid},
  volume={49},
  pages={331--368},
  year={1955}
}

@article{hirschman1950behaviour,
  title={On the behaviour of Fourier transforms at infinity and on quasi-analytic classes of functions},
  author={Hirschman, I. I.},
  journal={American Journal of Mathematics},
  volume={72},
  number={1},
  pages={200--213},
  year={1950},
  publisher={JSTOR}
}

@book{balser2008formal,
  title={Formal power series and linear systems of meromorphic ordinary differential equations},
  author={Balser, W.},
  year={2008},
  publisher={Springer Science \& Business Media}
}

@article{aniceto2019primer,
  title={A primer on resurgent transseries and their asymptotics},
  author={Aniceto, I. and Ba{\c{s}}ar, G. and Schiappa, R.},
  journal={Physics Reports},
  volume={809},
  pages={1--135},
  year={2019},
  publisher={Elsevier}
}

@article{miravitllas2019resurgence,
  title={Resurgence, a problem of missing exponential corrections in asymptotic expansions},
  author={Miravitllas Mas, R.},
  journal={arXiv},
  pages={arXiv--1904},
  year={2019}
}

@incollection{ramis1980series,
  title={Les series k--sommables et leurs applications},
  author={Ramis, J.-P.},
  booktitle={Complex analysis, microlocal calculus and relativistic quantum theory},
  pages={178--199},
  year={1980},
  publisher={Springer}
}

@article{Ramis_and_Sibuya ,
     author = {Ramis, J.-P. and Sibuya, Y.},
     title = {A new proof of multisummability of formal solutions of non linear meromorphic differential equations},
     journal = {Annales de l'Institut Fourier},
     publisher = {Association des Annales de l'institut Fourier},
     volume = {44},
     number = {3},
     year = {1994},
     pages = {811-848},
}

@article{ecalle1993cohesive,
  title={Cohesive functions and weak accelerations},
  author={Ecalle, J.},
  journal={Journal d'Analyse Mathematique},
  volume={60},
  number={1},
  pages={71--97},
  year={1993},
  publisher={Springer}
}

@book{ecalle1981fonctions,
  title={Les fonctions r{\'e}surgentes:(en trois parties)},
  author={{\'E}calle, J.},
  volume={1},
  year={1981},
  publisher={Universit{\'e} de Paris-Sud, D{\'e}partement de Math{\'e}matique, B{\^a}t. 425}
}

@book{ecalle1985fonctions,
  title={Les fonctions resurgentes: L'{\'e}quation du pont et la classification analytique des objets locaux},
  author={{\'E}calle, J.},
  volume={3},
  year={1985},
  publisher={Universit{\'e} de Paris-Sud, D{\'e}partement de Math{\'e}matique, B{\^a}t. 425}
}

@article{dudko2015resurgent,
  title={On the resurgent approach to {\'E}calle--Voronin's invariants},
  author={Dudko, A. and Sauzin, D.},
  journal={Comptes Rendus Mathematique},
  volume={353},
  number={3},
  pages={265--271},
  year={2015},
  publisher={Elsevier}
}

@article{sauzin2014introduction,
  title={Introduction to 1-summability and resurgence},
  author={Sauzin, D.},
  journal={arXiv preprint arXiv:1405.0356},
  year={2014}
}

@article{zinn1981perturbation,
  title={Perturbation series at large orders in quantum mechanics and field theories: application to the problem of resummation},
  author={Zinn-Justin, J.},
  journal={Physics Reports},
  volume={70},
  number={2},
  pages={109--167},
  year={1981},
  publisher={Elsevier}
}

@article{aniceto2015resurgent,
  title={Resurgent analysis of localizable observables in supersymmetric gauge theories},
  author={Aniceto, I. and Russo, J. and Schiappa, R.},
  journal={Journal of High Energy Physics},
  volume={2015},
  number={3},
  pages={172},
  year={2015},
  publisher={Springer}
}

@article{aniceto2011resurgence,
  title={The resurgence of instantons in string theory},
  author={Aniceto, I. and Schiappa, R. and Vonk, M.},
  journal={arXiv preprint arXiv:1106.5922},
  year={2011}
}

\end{document}